\theoremstyle{plain}    
\newtheorem{theorem}[subsection]{Theorem}
\newtheorem{lemma}[subsection]{Lemma}
\newtheorem{proposition}[subsection]{Proposition}
\newtheorem{corollary}[subsection]{Corollary}
\theoremstyle{definition}   
\newtheorem{definition}[subsection]{Definition}
\newtheorem{example}[subsection]{Example}
\newtheorem{question}[subsection]{Question}
\theoremstyle{remark}   
\newtheorem{remark}[subsection]{Remark}
\newcommand{\Int}{\mathbb{Z}}   
\newcommand{\sphere}{\mathbb{S}}
\newcommand{\Derived}{\mathsf{D}}   
\newcommand{\Tri}{\mathsf{T}}       
\newcommand{\St}{\mathsf{St}}       
\newcommand{\loc}{\mathrm{Loc}}     
\newcommand{\thick}{\mathrm{Thick}} 
\newcommand{\compact}{\mathsf{C}}   
\newcommand{\bl}{\mathsf{B}}        
\newcommand{\class}{\mathsf{A}}     
\newcommand{\Ker}{\mathrm{Ker}}     
\renewcommand{\Im}{\mathrm{Im}}     
\newcommand{\Path}{\mathrm{Path}}   
\newcommand{\fl}{\mathrm{L}}        
\newcommand{\fib}{\mathrm{F}}       
\newcommand{\End}{\mathrm{End}}     
\newcommand{\op}{\mathrm{op}}       
\newcommand{\Hom}{\mathrm{Hom}}     
\newcommand{\ext}{\text{Ext}}       
\newcommand{\chains}{\mathrm{C}}    
\newcommand{\ee}{\mathcal{E}}       
\newcommand{\spec}{\mathrm{Spec}}   
\newcommand{\supp}{\mathrm{supp}}   
\newcommand{\uu}{\mathcal{U}}       
\newcommand{\vv}{\mathcal{V}}       
\newcommand{\zz}{\mathcal{Z}}       
\newcommand{\pp}{\mathfrak{p}}      
\newcommand{\qq}{\mathfrak{q}}      
\newcommand{\mm}{\mathfrak{m}}      
\renewcommand{\aa}{\mathfrak{a}}    
\newcommand{\Hring}{\mathscr{H}}    
\newcounter{commentcount}[section]
\begin{document}

\title{On the string topology category of compact Lie groups}
\author{Shoham Shamir}
\address{Shoham Shamir}
\email{shoham\_s@yahoo.com}
\date{\today}

\begin{abstract}
This paper examines the string topology category of a manifold, defined by Blumberg, Cohen and Teleman. Since the string topology category is a subcategory of a compactly generated triangulated category, the machinery of stratification, constructed by Benson, Krause and Iyengar, can be applied in order to gain an understanding of the string topology category. It is shown that an appropriate stratification holds when the manifold in question is a simply conneceted compact Lie group. This last result is used to derive some properties of the relevant string topology categories.
\end{abstract}

\maketitle                  


\section{Introduction}

For a closed, oriented manifold $M$ of dimension $m$, the \emph{string topology category $\St_M$} of $M$ is a category enriched over chain-complexes over a ground ring $k$. Its objects are closed, oriented submanifolds of $M$. The chain-complex of morphisms $\Hom_{\St_M}(N_1,N_2)$ between two objects $N_1,N_2 \in \St_M$ is quasi-isomorphic to the chain-complex of the space $\Path_M(N_1,N_2)$, which is the space of paths from $N_1$ to $N_2$ in $M$. Obviously, for an object $N \in \St_M$ the chain-complex of endomorphisms $\Hom_{\St_M}(N,N)$ forms a differential graded algebra (\emph{dga}). Blumberg, Cohen and Teleman defined the string topology category in~\cite{BlumbergCohenTeleman}, where they also posed the following question:

\begin{question}[{\cite{BlumbergCohenTeleman}}]
\label{que: BCT question}
For which connected, closed, oriented submanifolds $N \subset M$ is the Hochschild cohomology of the dga $\Hom_{\St_M}(N,N)$ isomorphic to the homology of the of the free loop space $\fl M$ as algebras?
\[ H_{*+m}(\fl N;k) \cong HH^*\Hom_{\St_M}(N,N)?\]
The algebra structure on the homology of the free loop space is the \emph{loop product} given by Chas and Sullivan~\cite{ChasSullivan}.
\end{question}

In fact, for every object $N \in \St_M$ there is a natural map of graded-commutative rings
\[\varphi_N: H_{*+m}(\fl M;k) \to HH^*(\Hom_{\St_M}(N,N)|k)\]
whose construction is given in Section~\ref{sec: Hochschild cohomology and Dwyer-Greenlees completion}. So it is natural to ask when is this particular map an isomorphism. The following result provides an answer to this question in several cases.

\begin{theorem}
\label{thm: Answer theorem}
Fix a regular commutative ring $k$ and let $M$ be a compact, simply-connected manifold of dimension $m$ satisfying the following conditions:
\begin{enumerate}
\item $H_*(\Omega M;k)$ is a polynomial ring over $k$ on finitely many generators concentrated in even degrees and
\item the natural map $H_{*+m}(\fl M) \to H_*(\Omega M) $ is surjective.
\end{enumerate}
Then for any connected, closed, oriented submanifold $N \subset M$ the map
\[ \varphi_N: H_{*+m}(\fl M;k) \to HH^*(\Hom_{\St_M}(N,N)|k)\]
is an isomorphism.
\end{theorem}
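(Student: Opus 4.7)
The plan is to reduce the theorem to the case $N = pt$ via Benson--Iyengar--Krause stratification of the derived category of modules over $\chains_*(\Omega M;k)$, and then to identify $HH^*(\chains_*(\Omega M;k)|k)$ with $H_{*+m}(\fl M;k)$ using conditions (1) and (2).

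First, I would realize each $N \in \St_M$ as a compact object $X_N := \chains_*\Path_M(pt,N)$ of $\Derived(\chains_*(\Omega M;k))$, with its $\chains_*(\Omega M;k)$-module structure coming from loop concatenation in the fibration $\Omega M \to \Path_M(pt,N) \to N$. A Rothenberg--Steenrod computation gives the derived endomorphism dga $\End_{\Derived(\chains_*(\Omega M;k))}(X_N)$ quasi-isomorphic to $\chains_*\Path_M(N,N) \simeq \Hom_{\St_M}(N,N)$, and compactness of $X_N$ follows because $N$ is a finite CW complex mapping into the simply connected $M$. The unit $X_{pt} = \chains_*(\Omega M;k)$ generates the compact part. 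Now condition (1) says the Pontryagin ring $H_*(\Omega M;k)$ is a finitely generated polynomial ring on even-degree generators---in particular strictly graded-commutative and Noetherian---and it acts centrally on $\Derived(\chains_*(\Omega M;k))$. Under this hypothesis the BIK machinery stratifies the derived category by $\spec H_*(\Omega M;k)$, every nonzero compact object has full support, and the natural map from the graded center $HH^*(\chains_*(\Omega M;k)|k)$ to $HH^*(\End X_N|k)$ is an isomorphism. This reduces the theorem to showing that $\varphi_{pt}$ is an isomorphism.

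For $N = pt$ the polynomial Pontryagin ring in even degrees is intrinsically formal, so $\chains_*(\Omega M;k)$ is quasi-isomorphic as dgas to $H_*(\Omega M;k)$ and $HH^*(\chains_*(\Omega M;k)|k) \cong HH^*(H_*(\Omega M;k)|k)$. A Hochschild--Kostant--Rosenberg style computation on the polynomial algebra yields this cohomology as a free module over $H_*(\Omega M;k)$ on an exterior algebra with appropriately shifted generators. On the other side the surjection in condition (2) produces the polynomial subalgebra of $H_{*+m}(\fl M;k)$, while the constant-loops inclusion $M \hookrightarrow \fl M$ paired with the orientation class of $M$ produces the exterior generators in the correct degrees. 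Comparing Poincar\'e series using condition (1) and checking compatibility with the construction of $\varphi_{pt}$ completes the proof. The main obstacle is establishing the BIK stratification under only the abstract conditions (1)--(2) rather than exclusively for compact Lie groups, and matching the abstract center isomorphism supplied by stratification with $\varphi_N$ on the nose; both require careful bookkeeping of the module structures and orientation shifts.
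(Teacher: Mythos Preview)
Your overall architecture matches the paper's: use BIK stratification of $\Derived(R)$ for $R=\chains_*(\Omega M;k)$ to show $k\in\thick_{\Derived(R)}(X_N)$ for every connected $N$, then deduce that $\varphi_N$ is an isomorphism via Dwyer--Greenlees completion. But two steps in your first paragraph are genuine gaps, and your second paragraph misplaces the role of condition~(2).

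First, you assert that $H_*(\Omega M;k)$ ``acts centrally on $\Derived(\chains_*(\Omega M;k))$''. It does not: $R=\chains_*(\Omega M;k)$ is not a commutative ring spectrum, so $\pi_*R$ does not furnish natural transformations of the identity. What acts is the Hochschild cohomology $HH^*(R|k)$. This is precisely where condition~(2) enters in the paper, not in the $N=pt$ endgame: Malm's isomorphism $H_{*+m}(\fl M;k)\cong HH^*(R|k)$ converts condition~(2) into surjectivity of the canonical map $HH^*(R|k)\to\pi^*R$, which lets one choose a Noetherian subring $\Hring\subset HH^*(R|k)$ surjecting onto $\pi^*R$. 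It is $\Hring$, not $\pi_*R$, that acts.

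Second, and more seriously, you write ``the BIK machinery stratifies the derived category'' as though this were automatic once an action exists. It is not: one must verify that each $\Im\Gamma_\pp$ is minimal. This is the main technical content of the paper (its Theorem on stratification), and it uses the surjectivity $HH^*(R|k)\twoheadrightarrow\pi^*R$ essentially. The paper localizes at $\pp$, lifts a regular system of parameters for the local ring $(\pi^*R)_\pp$ to elements of $HH^*(R|k)_\pp$ (possible only because of condition~(2)), and shows that the resulting Koszul object $A/X$ is a ``left ring object'' whose localizing subcategory is minimal. Your proposal contains no mechanism for this step. A formality argument for $R$ could in principle replace some of this machinery, but you would still need to prove minimality, and intrinsic formality of $\chains_*(\Omega M;k)$ as an \emph{associative} dga over a general regular ring $k$ is not the classical CDGA statement and needs its own justification.

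Two smaller points. The claim ``every nonzero compact object has full support'' is false; what is true (and sufficient) is that $\supp_{\Hring}k$ consists of the closed points and hence lies in $\supp_{\Hring}\pi^*X$ for every nonzero compact $X$. And once Malm's isomorphism is granted, $\varphi_{pt}$ is tautologically an isomorphism (since $\End_R(R)\simeq R$), so your entire second paragraph is an attempt to reprove Malm's result rather than a necessary reduction.
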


For rational coefficients $k=\mathbb{Q}$, Theorem~\ref{thm: Answer theorem} provides an answer to the question above whenever $M$ is a simply-connected, compact Lie group. For integral coefficients this yields an answer when $M$ is a special unitary group $SU(n)$, for $n>1$. This is shown in Theorem~\ref{the: The two exmaples of compact Lie groups}.

Before presenting the second result we require some notation. For a connected submanifold $N \in \St_M$ we denote by $\fib_N$ the homotopy fibre of the inclusion $N \to M$. Recall that the homology of $\fib_N$ is a module over the homology of the based loop space $\Omega M$. When $H_*\Omega M$ is a Noetherian commutative ring, the support of the $H_*\Omega M$-module $H_* \fib_N$ will be denoted by $\supp_{H_*\Omega M} H_* \fib_N$. The singular chain-complex of a space $X$, with coefficients in $k$, is denoted by $\chains_*(X;k)$.

\begin{theorem}
\label{thm: Second theorem}
Let $M$ be a compact manifold satisfying the conditions of Theorem~\ref{thm: Answer theorem}. Then for any two connected, orientable submanifolds $N_1, N_2 \subset M$ the natural map
\[ \Hom_{\St_M}(N_1,N_2) \otimes_{\Hom_{\St_M}(N_1,N_1)}^\mathbf{L} \chains_*(N_1;k) \to \chains_*(N_2;k)\]
is a quasi-isomorphism if and only if
\[ \supp_{H_*(\Omega M;k)} H_*(\fib_{N_2};k) \subseteq \supp_{H_*(\Omega M;k)} H_*(\fib_{N_1};k).\]
\end{theorem}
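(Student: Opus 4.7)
The plan is to transfer the statement into the derived category of $\chains_*(\Omega M;k)$-modules, where the stratification theorem established earlier in the paper applies, and then to read the conclusion off a standard cellularization/completion argument. First I would use the Koszul/Rothenberg--Steenrod dictionary of the type developed in Section~\ref{sec: Hochschild cohomology and Dwyer-Greenlees completion} to identify the string topology category with a full subcategory of the derived category of $\chains_*(\Omega M;k)$-modules via $N \mapsto \chains_*(\fib_N;k)$, under which $\Hom_{\St_M}(N_1,N_2)$ computes the derived Hom between $\chains_*\fib_{N_1}$ and $\chains_*\fib_{N_2}$, and $\chains_*(N;k)$ appears naturally over the dga $\End_{\St_M}(N)$ as the Dwyer--Greenlees completion/evaluation object attached to $\chains_*\fib_N$.

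Second, under this identification the map in the theorem is the standard evaluation (counit) map, and it is a quasi-isomorphism precisely when $\chains_*\fib_{N_2}$ belongs to the localizing subcategory $\loc(\chains_*\fib_{N_1})$ of the ambient module category. Since $H_*(\Omega M;k)$ is a polynomial ring on finitely many even-degree generators, it is Noetherian and the Benson--Iyengar--Krause stratification proved earlier in the paper classifies localizing subcategories of $\chains_*(\Omega M;k)$-modules by subsets of $\spec H_*(\Omega M;k)$ via support. Consequently
\[\chains_*\fib_{N_2} \in \loc(\chains_*\fib_{N_1}) \iff \supp_{H_*\Omega M} H_*\fib_{N_2} \subseteq \supp_{H_*\Omega M} H_*\fib_{N_1},\]
where I use that the support of the dg-module $\chains_*\fib_N$ coincides with the support of its graded homology over the polynomial ring $H_*(\Omega M;k)$ (no collapse obstruction because the generators sit in even degrees). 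Combining the two equivalences finishes the proof.

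The main obstacle I expect is the first step: pinning down the dictionary carefully enough that the tensor--product map in the statement is literally the localization--theoretic evaluation map on the module side, in particular matching $\chains_*(N;k)$ with the correct completion object over $\End_{\St_M}(N)$ and verifying compatibility of the $\End_{\St_M}(N_1)$- and $\End_{\St_M}(N_2)$-actions under the equivalence. Once this identification is in place, the rest is a formal application of the stratification classification together with the elementary support computation for dg-modules over a polynomial ring concentrated in even degrees.
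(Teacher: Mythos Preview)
Your overall strategy matches the paper's: translate to $\Derived(R)$ for $R=\chains_*(\Omega M;k)$ via $N\mapsto \chains_*\fib_N$, use the Dwyer--Greenlees colocalization formula to recognize the relevant map as the counit, and then invoke the stratification theorem (plus Lemma~\ref{lem: familiar support}) to equate ``$Y\in\loc(X)$'' with the support inclusion. That part is exactly right.

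Where you diverge from the paper is at the point you yourself flag as the main obstacle. You propose to identify $\chains_*(N;k)$ directly as some ``Dwyer--Greenlees completion/evaluation object'' over $\End_{\St_M}(N)$ and then read off the theorem's map as the counit. The paper does \emph{not} do this, and it is not clear that such an identification is available in any straightforward way. Instead, the paper first writes down the genuine Dwyer--Greenlees colocalization map
\[
\gamma:\ \Hom_{\St_M}(N_1,N_2)\otimes_{\End_{\St_M}(N_1)}\chains_*(\fib_{N_1};k)\longrightarrow \chains_*(\fib_{N_2};k)
\]
in $\Derived(R)$, and then applies the functor $k\otimes_R(-)$. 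Using $k\otimes_R\chains_*\fib_{N_i}\simeq\chains_*(N_i;k)$ and the observation that the left $R$-action on the source of $\gamma$ is through the factor $\chains_*\fib_{N_1}$, one computes that $k\otimes_R\gamma$ is precisely the map in the statement of the theorem. The remaining point is that $\gamma$ is an equivalence iff $k\otimes_R\gamma$ is; this is where the paper invokes the Koszul duality of~\cite{DwyerGreenlees,DwyerGreenleesIyengar}: with $\ee=F(\Sigma^\infty M;k)$ and $\End_\ee(k)\simeq R$, the adjoint pair $k\otimes_R(-)\dashv\Hom_\ee(k,-)$ restricts to an equivalence $\Derived(R)^\compact\simeq\thick_{\Derived(\ee)}(k)$, so $k\otimes_R(-)$ reflects equivalences between compact $R$-modules. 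Once you replace your proposed ``completion identification'' by this two-step passage (apply $k\otimes_R-$, then use the Koszul equivalence), the rest of your argument goes through verbatim.
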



The techniques used in obtaining these two results are interesting in their own right, and we describe them next. Let $R = \chains_*(\Omega M, k)$ be the differential graded algebra of singular chains on the based loop space, its derived category will be simply denoted by $\Derived$. The string topology category $\St_M$ is in fact a subcategory of $\Derived$, and gains its enrichment from the enrichment of $\Derived$ over $k$-chain complexes.

In addition, the objects of $\St_M$ enjoy the special property of being \emph{compact objects} in $\Derived$. Recall that an object $X$ in a triangulated category is called \emph{compact} if morphisms out of $X$ commute with coproducts. The full subcategory of compact objects in $\Derived$ will be denoted by $\Derived^\compact$. For a compact object $X \in \Derived$ there is a natural morphism (given in Definition~\ref{def: definition of varphi}):
\[ \varphi_X:HH^*(R|k) \to HH^*(\Hom_\Derived(X,X)|k)\]
By the work of Malm~\cite{Malm}, $HH^*(R|k) \cong H_{*+m}(\fl M;k)$ as graded rings. Question~\ref{que: BCT question} can therefore be translated to:
\begin{question}
For which connected, closed, oriented submanifolds $N \subset M$ is the map $\varphi_{\chains_*(\fib_N;k)}$ an isomorphism?
\end{question}
To understand the answer to this question we must first recall what are thick subcategories.

\begin{definition}
A full, triangulated, subcategory $\Tri'$ of a triangulated category $\Tri$ is \emph{thick} if $\Tri'$ is closed under direct summands. The thick subcategory generated by an object $X\in \Tri$ is the smallest thick subcategory containing $X$, this subcategory is denoted by $\thick_\Tri(X)$.
\end{definition}

Now suppose our compact manifold $M$ is simply connected and let $X$ be a compact object in $\Derived$. Note that $k$ is also an object of $\Derived$, it is in fact a compact object. As we shall see, if $k$ belongs to the thick subcategory generated by $X$ then $\varphi_X$ is an isomorphism (Proposition~\ref{pro: when varphi is an iso}). Hence, classifying the thick subcategories of $\Derived^\compact$ might answer our questions.

Benson, Iyengar and Krause recently provided a general machinery for performing such classification~\cite{BIKstratifying,BIKgroupStratifying} called \emph{stratification}. A full review of this machinery is provided in Section~\ref{sec: Stratification of a derived category}, but we give a shortened version here. The ingredients for stratification are two: a compactly generated triangulated category, $\Derived$ in our case, and a graded-commutative Noetherian ring $\Hring$ which \emph{acts} on $\Derived$. The action is given by natural transformations, i.e. given $s \in \Hring_n$ there is a natural transformation $s:\Sigma^n 1_\Derived \to 1_\Derived$. For example, the Hochschild cohomology ring $HH^*(R|k)$ acts on $\Derived$. We will take $\Hring$ to be a Noetherian subring of $HH^*(R|k)$. As we shall see, the action of $\Hring$ on $\Derived$ implies that for any object $X\in \Derived$ the homology $H_*X$ is naturally a module over $\Hring$.

To get a classification of the thick subcategories of $\Derived^\compact$ we need the action of $\Hring$ on $\Derived$ to satisfy certain conditions (given in Definition~\ref{def: Stratification}). When these are satisfied, we say that $\Hring$ \emph{stratifies} $\Derived$. The resulting classification is given in terms of certain subsets of the prime ideal spectrum of $\Hring$. Recall that a subset $\vv$ of the prime ideal spectrum of $\Hring$ is called \emph{specialization closed} if whenever $\pp \subset \qq$ and $\pp \in \vv$ then also $\qq \in \vv$.

The next lemma is an immediate consequence of the tools and results of~\cite{BIKstratifying}.

\begin{lemma}
\label{lem: simple application of stratification}
Let $M$, $R$ and $\Derived$ be as above and let $\Hring$ be a Noetherian subring of $HH^*(R|k)$. Suppose that $H_*R$ is finitely generated as an $\Hring$-module and that $\Hring$ stratifies $\Derived$. Then there is an inclusion respecting injection~\cite[Theorem 6.1]{BIKstratifying}:
\[ \left\{
     \begin{array}{c}
        \text{Thick}\\
        \text{subcategories of }\Derived^\compact\\
     \end{array}
   \right\}
   \longrightarrow
   \left\{
     \begin{array}{c}
        \text{Specialization closed subsets of}\\
        \text{the prime ideal spectrum of } \Hring \\
     \end{array}
   \right\}
\]
Where a thick subcategory $\Tri\subset \Derived^\compact$ maps to the subset $\bigcup_{X \in \Tri} \supp_\Hring H_*X$ of the prime ideal spectrum of $\Hring$.
\end{lemma}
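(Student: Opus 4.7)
My approach is to reduce the statement to the stratification bijection of Benson--Iyengar--Krause combined with the standard correspondence between thick subcategories of compact objects and the localizing subcategories they generate. The stratification hypothesis means that the support assignment gives a bijection between the localizing subcategories of $\Derived$ and arbitrary subsets of $\spec \Hring$. Precomposing with $\Tri \mapsto \loc(\Tri)$ from thick subcategories of $\Derived^\compact$ to localizing subcategories of $\Derived$ produces a map $\Tri \mapsto \supp_\Hring \loc(\Tri)$. Since $\loc(\Tri)$ is generated by $\Tri$ under coproducts, shifts and triangles, and the support function is compatible with these operations, this support equals $\bigcup_{X \in \Tri} \supp_\Hring H_* X$, which is the formula in the lemma.

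Injectivity of the composite has two ingredients. The second map is injective by stratification. For the first, I would invoke Neeman's correspondence: the assignment $\Tri \mapsto \loc(\Tri)$ has left inverse $\fl \mapsto \fl \cap \Derived^\compact$, since $R$ is a compact generator of $\Derived$. Thus any thick subcategory $\Tri \subset \Derived^\compact$ is recovered from $\loc(\Tri)$, giving injectivity of $\Tri \mapsto \loc(\Tri)$ and hence of the composite.

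It remains to verify that the image lands in the specialization closed subsets, and this is where the hypothesis that $H_* R$ is finitely generated over $\Hring$ enters. I would first show that $H_* X$ is a finitely generated $\Hring$-module whenever $X$ is compact. Since $R$ is a compact generator, $\Derived^\compact = \thick_\Derived(R)$, so $X$ is built from $R$ by finitely many shifts, mapping cones and retracts. Using the long exact sequence in homology together with Noetherianness of $\Hring$, an induction on the number of triangles used to build $X$ yields finite generation of $H_* X$. Consequently $\supp_\Hring H_* X$ is the closed subset cut out by $\mathrm{Ann}_\Hring H_* X$, and an arbitrary union of closed subsets is specialization closed.

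The main obstacle, if any, is the finite-generation step, since one must argue through long rather than short exact sequences; however, this is a standard manoeuvre once $\Hring$ is Noetherian, so I do not expect a substantive difficulty. The rest of the argument is a direct assembly of results from \cite{BIKstratifying}, which is why the author advertises the lemma as an immediate consequence of that machinery.
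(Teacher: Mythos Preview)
Your argument is correct and is exactly what the paper has in mind: the lemma is stated there as an immediate consequence of \cite[Theorem~6.1]{BIKstratifying}, with no further proof given, and your unpacking via $\Tri \mapsto \loc(\Tri)$ together with Neeman's identity $\loc(\Tri)\cap\Derived^\compact=\Tri$ is precisely how that theorem is applied.

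One small refinement: the finite-generation hypothesis on $H_*R$ is doing slightly more work than you indicate. You invoke it only to show that each $\supp_\Hring H_*X$ is Zariski-closed, but it is also needed earlier, at the step where you identify $\supp_\Hring \loc(\Tri)$ (the Benson--Iyengar--Krause support, defined via the functors $\Gamma_\pp$) with $\bigcup_{X\in\Tri}\supp_\Hring H_*X$ (the module-theoretic support). These two notions of support agree for a compact object $X$ precisely because $H_*X$ is a finitely generated $\Hring$-module; this is \cite[Theorem~5.5]{BIKsupport}, and the paper records it later as Lemma~\ref{lem: familiar support}. Without that identification, the map you produce from stratification would a priori be $\Tri\mapsto \bigcup_{X\in\Tri}\{\pp:\Gamma_\pp X\neq 0\}$ rather than the formula in the lemma.
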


\begin{example}
First, we record the following fact: under the conditions of Lemma~\ref{lem: simple application of stratification}, if $\Tri$ is the thick subcategory generated by a compact object $Y$, then
\[ \bigcup_{X \in \Tri} \supp_\Hring H_*X = \supp_\Hring H_*Y\]
Now suppose, for example, that $k$ is a field and that $\Hring$ is a polynomial ring $k[x_1,...,x_n]$ whose generators lie in positive even degrees. Suppose also that the conditions of Lemma~\ref{lem: simple application of stratification} hold. Then $k \in \thick_\Derived X$ for some $X\in \Derived^\compact$ if and only if $\supp_\Hring k \subset \supp_\Hring H_*X$. It is easy to see that the support of $k$ contains only the maximal ideal $\mm = (x_1,...,x_n)$. Since the support of any non-zero $\Hring$-module contains $\mm$, we see that $k$ is in $\thick_\Derived X$ for any $X \in \Derived^\compact$. Therefore $\varphi_X$ is an isomorphism for every $X \in \Derived^\compact$.
\end{example}

Hence, we need a result asserting the existence of stratification under certain conditions. To get such a result we prefer to work in the context of stable homotopy theory, instead of the differential graded context we followed until now. This is because the stable context is more general than the differential graded one.

Thus, we let $\sphere$ denote the sphere spectrum and from now on we work with $\sphere$-modules and $\sphere$-algebras in the sense of of~\cite{EKMM}. We remind the reader that the stable homotopy groups of a spectrum $X$ are denoted by $\pi_*X$.

\begin{theorem}
\label{the: Main algebraic theorem on stratification}
Let $k$ be a commutative $\sphere$-algebra and let $R$ be a $k$-algebra (which is implicitly assumed to be q-cofibrant over $k$). Suppose $R$ satisfies the following conditions:
\begin{enumerate}
\item $\pi_*R$ is a Noetherian graded-commutative regular ring of finite Krull dimension concentrated in even degrees,
\item the natural map $HH^*(R|k) \to \pi_{-*}R$ is surjective and
\item $R$ is compact as an $R \otimes_k R^{op}$-module.
\end{enumerate}
Choose a Noetherian subring $\Hring \subset HH^*(R|k)$ which surjects onto $\pi_*R$. Then the action of $\Hring$ stratifies $\Derived(R)$.
\end{theorem}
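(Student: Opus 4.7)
The plan is to invoke the Benson-Iyengar-Krause machine: stratification follows once one establishes (i) the \emph{local-global principle}, that every localizing subcategory of $\Derived(R)$ is determined by its images under the local cohomology functors $\Gamma_\pp$ indexed by primes of $\Hring$, and (ii) \emph{local minimality}, that for each $\pp \in \spec \Hring$ the subcategory $\Gamma_\pp \Derived(R)$ has no proper nonzero localizing subcategory. The local-global principle should follow in our setting from the standard results of~\cite{BIKstratifying}, because $\Hring$ is Noetherian and $\Derived(R)$ is compactly generated; so the entire content of the theorem lies in establishing local minimality.

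For this, the surjection $HH^*(R|k) \twoheadrightarrow \pi_{-*}R$ of condition~(2) shows that the $\Hring$-action on $\pi_*X$, for $X\in \Derived(R)$, factors through the induced action of $\pi_*R$. Hence a prime $\pp \subset \Hring$ can carry a nonzero $\Gamma_\pp$ only if it contains the kernel of $\Hring \twoheadrightarrow \pi_*R$, so it suffices to prove minimality at primes $\qq \in \spec \pi_*R$. For each such $\qq$ I would construct a ``derived residue field'' $\kappa(\qq)$ as an iterated Koszul cofibre of multiplication maps by a regular sequence in $\Hring$ whose image generates $\qq(\pi_*R)_\qq$. Such a regular sequence exists because $\pi_*R$ is regular of finite Krull dimension by condition~(1) and lifts to $\Hring$ by the surjectivity in condition~(2); since its elements lie in even degrees the homotopy ring $\pi_*\kappa(\qq)$ is an exterior algebra on $\dim(\pi_*R)_\qq$ generators over the residue field of $\pi_*R$ at $\qq$.

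The principal obstacle is minimality itself. I would exploit condition~(3), which asserts that $R$ is \emph{homologically smooth} over $k$: this provides a perfect resolution of $R$ as an $R\otimes_k R^\op$-module and hence strong finiteness for internal $\ext$-computations. In particular $\kappa(\qq)$ is compact in the localization $\Gamma_\qq \Derived(R)$, and its derived endomorphism DGA has homotopy ring a finite graded-commutative exterior algebra over the field $\kappa(\qq)$. A Koszul-duality style argument would then identify $\Gamma_\qq \Derived(R)$ with the derived category of this exterior algebra; since the latter is a finite-dimensional graded-local algebra, every nonzero object admits a nonzero map to or from its residue field, so every nonzero localizing subcategory contains the generator. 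The hardest part of the argument is matching the Koszul duality equivalence with the local cohomology and support theory so that minimality transfers correctly; this is precisely where regularity, smoothness, and surjectivity of $HH^*(R|k)$ are all needed in tandem. With that step in place, the BIK machine yields the desired stratification.
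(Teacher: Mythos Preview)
Your decomposition into local-global plus local minimality is right, and the local-global principle does follow from finite Krull dimension of $\Hring$. There is a slip: since the sequence is \emph{regular} on $(\pi_*R)_\qq$, the homotopy $\pi_*\kappa(\qq)$ is the residue field, not an exterior algebra; the exterior algebra is $\pi_*\End_R(\kappa(\qq))$, as you correctly say a line later. The real gap is the minimality argument. Passing by Morita to $\Derived(\ee)$ with $\ee=\End_R(\kappa(\qq))$ is legitimate, and $\pi_*\ee$ is indeed finite graded-local over a field, but the assertion that ``every nonzero object admits a nonzero map to or from the residue field, so every nonzero localizing subcategory contains the generator'' is neither proved nor sufficient: a nonzero morphism does not make its source or target \emph{build} the other. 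You supply no mechanism---formality of $\ee$, or central nilpotent elements acting on $\Derived(\ee)$---that would let you run a Koszul-type argument on the $\ee$-side. You have relocated the hard step rather than resolved it. (Also, compactness of $\kappa(\qq)$ in $\Gamma_\qq\Derived(R)$ is automatic from its being finitely built from the compact generator $R_\pp$ of $\Im L_{\zz(\pp)}$, and does not use condition~(3).)

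The paper's route is different and more direct: it stays on the $R$-side and equips the Koszul object $A/X$ (with $A=L^e_{\zz(\pp)}R$) with a \emph{left ring object} structure---an $R$-bimodule with unit $R\to A/X$ and a one-sided product, constructed by adapting the EKMM argument. To take the Koszul cofibres at the bimodule level one must lift the regular sequence to $\ext^*_{R^e}(A,A)$; this is where condition~(3) is actually used, to identify $\ext^*_{R^e}(A,A)\cong HH^*(R)_\pp$ so that the surjection onto $\pi^*A=(\pi_*R)_\pp$ survives localization. The ring-object structure then yields the decisive splitting lemma: for every $A$-module $M$, the object $M/X$ is a direct sum of suspensions of $A/X$. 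Since any nonzero $M\in\Gamma_\pp\Derived(R)$ has $M/X\neq 0$, it builds $A/X$, and minimality follows. This splitting is precisely the missing engine in your sketch; translated to the $\ee$-side it would say that every $\ee$-module becomes a sum of copies of the residue field after killing the $\xi_i$, and proving that requires exactly the kind of central-element, ring-object argument the paper carries out.
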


\begin{remark}
By \emph{graded-commutative regular} we mean that for any prime ideal $\pp \subset \pi_*R$ the unique homogeneous maximal ideal of the graded localization $(\pi_*R)_\pp$ is generated by a regular sequence of homogeneous elements.
\end{remark}

Both Theorem~\ref{thm: Answer theorem} and Theorem~\ref{thm: Second theorem} easily follow from the result above. It should be noted that the techniques used in proving Theorem~\ref{the: Main algebraic theorem on stratification} share some similarities with those of~\cite{ShamirStratify}.

\subsection*{Organization of this paper}
In Section~\ref{sec: The string topology category} we define the context we will work in and recall the definition of the string topology category from~\cite{BlumbergCohenTeleman}. Section~\ref{sec: Localizing subcategories and localization} reviews a couple of notions of localization, which will be used throughout the paper. One of these notions of localization is the Dwyer-Greenlees completion. In Section~\ref{sec: Hochschild cohomology and Dwyer-Greenlees completion} we explain how Hochschild cohomology behaves with respect to the Dwyer-Greenlees completion and define the map $\varphi$.

Section~\ref{sec: Stratification of a derived category} provides the necessary background on the subject of stratification from~\cite{BIKstratifying}. In Section~\ref{sec: Hochschild cohomology and localization} we examine how Hochschild cohomology behaves with respect to the localization methods used in the machinery of stratification. Section~\ref{sec: Left ring objects} constructs the main tools we need in order to prove Theorem~\ref{the: Main algebraic theorem on stratification}. Finally, Section~\ref{sec: Proofs of the main results} provides the proofs for the main results.

\subsection*{Notation and terminology}
Most of the work in this paper is carried out in the derived category of an $\sphere$-algebra $R$, denoted $\Derived(R)$. However, we could just as well have used a differential graded setup where $R'$ is a dga. The work of Shipley~\cite{ShipleySpectraDGA} shows that for every such dga $R'$ there is a corresponding a $\sphere$-algebra $R$ such that the derived categories $\Derived(R')$ and $\Derived(R)$ are equivalent as triangulated categories. Thus, there is no loss in working with $\sphere$-algebras.

With regards to $\sphere$-algebras and their modules we follow the work of Dwyer, Greenlees and Iyengar~\cite{DwyerGreenleesIyengar} in notation and terminology. Thus, if $k$ is a commutative $\sphere$-algebra the symmetric monoidal product of $k$-modules (i.e. the smash product) will be denoted by $\otimes_k$ as in~\cite{DwyerGreenleesIyengar} (instead of $\wedge_k$). When $k$ is understood from the context we will simply use the notation $\otimes$ for $\otimes_k$. Also following in~\cite{DwyerGreenleesIyengar}, we implicitly assume that $\otimes_k$ is derived, i.e. that the appropriate cofibrant replacements have been performed before applying the functor. This assumption applies also to the $\Hom_R$ and $\otimes_R$ functors, where $R$ is a $k$-algebra (alternately one can assume these are functors defined on the derived category of $R$). For an $R$-module $X$ we denote by $\End_R(X)$ the $k$-algebra $\Hom_R(X,X)$.

For notation and terminology pertaining to triangulated categories we follow Benson, Iyengar and Krause~\cite{BIKstratifying}. One exception to this are the \emph{homotopy groups} of an object $X$ of the derived category $\Derived(R)$, defined by
\[ \pi_nX=\hom_{\Derived(R)} (\Sigma^n R,X) = \ext^{-n}_R (R,X)\]
To avoid cumbersome notation we adopt the convention that $\pi^n X = \pi_{-n} X = \ext^n_R(R,X)$.

Given commutative $\sphere$-algebra $k$ and a $k$-algebra $R$ we denote by $R^e$ the $\sphere$-algebra $R \otimes_k R^\op$. Recall that this is implicitly the derived smash product $R\otimes_k^\mathbf{L}R^\op$. The Hochschild cohomology of $R$ (over $k$) is the graded-commutative ring
\[ HH^*(R) = HH^*(R|k) = \ext^*_{R^e}(R,R)\]



\section{The string topology category and related categories}
\label{sec: The string topology category}

In this section we recall the definition of the string topology category from~\cite{BlumbergCohenTeleman} as well as several other categories which are essential for this paper. We start by describing our context in detail.

\subsection*{Derived categories and compact objects}
Fix a commutative $\sphere$-algebra $k$, in this paper $k$ will usually be the Eilenberg-Mac Lane spectrum of some commutative ring. Let $R$ be a $k$-algebra. The category of $R$-modules has a well known model category structure where the weak equivalences are $\pi_*$-isomorphisms. The resulting homotopy category is a triangulated category called the \emph{derived category} of $R$-modules, denoted by $\Derived(R)$.

Recall that an object $X$ in some triangulated category $\Tri$ is called \emph{compact} if, for any set of objects $\{ Y_i\}_{i\in I}$ in $\Tri$, the natural map
\[ \hom_{\Tri}(X, \oplus_i Y_i) \to \oplus_i \hom_{\Tri}(X, \oplus_i Y_i) \]
is an isomorphism. The full subcategory of compact objects in $\Tri$ is denoted $\Tri^\compact$. For example, the $k$-algebra $R$ is compact in $\Derived(R)$. In fact, $R$ is a \emph{compact generator} of $\Derived(R)$, meaning that $\ext_{\Derived (R)}^*(R,X) = 0$ if and only if $X=0$. Or, in other words, for any $X\in \Derived(R)$, $\pi_*X =0$ if and only if $X=0$.

\begin{example}
Here is our main example of such a derived category. Let $M$ be a pointed topological space and let $\Omega M$ be the space of Moore loops on $M$, which is an associative and unital topological monoid. The \emph{chains of $\Omega M$} with coefficients in $k$ is the $k$-algebra
\[ \chains_*(\Omega M; k) = k \otimes_\sphere \Sigma^\infty \Omega M \]
We denote this algebra simply by $\chains_*\Omega M$ whenever $k$ is understood from the context. Note that when $k$ is the Eilenberg-Mac Lane spectrum of a commutative ring $\tilde{k}$ then $\pi_n\chains_*(\Omega M; k) \cong H_n(M;\tilde{k})$. The derived category $\Derived(\chains_*\Omega M)$ and its subcategory of compact objects will be the main categories we work with.

Topological spaces can induce objects of $\Derived(\chains_*\Omega M)$ in the following manner. Suppose that $N$ is a subspace of $M$. Let $\fib_N$ denote the homotopy fibre of the inclusion map $N \to M$. As is well known, we can take $\fib_N$ to be the space of Moore paths $\Path_M(*,N)$ from the base point of $M$ to $N$. This space has an action of $\Omega M$ and therefore $\chains_* \fib_N$ is naturally an object of $\Derived(\chains_* \Omega M)$. In what follows we will often use this connection between subspaces of $M$ and objects of $\Derived(\chains_*\Omega M)$.
\end{example}

\subsection*{The string topology category}
Let $M$ be a pointed closed oriented manifold and let $k$ be a commutative $\sphere$-algebra. Following~\cite{BlumbergCohenTeleman} we define the \emph{string topology category} $\St_M$ to have as objects all closed oriented submanifolds $N \subset M$. The morphisms in $\St_M$ are:
\[ \hom_{\St_M}(N_1,N_2) = \hom_{\chains_*\Omega M}(\chains_* \fib_{N_1},\chains_* \fib_{N_2})\]
Since the category of $\chains_*\Omega M$-modules is enriched over $k$-modules, we can lift this enrichment to the string topology category by defining:
\[ \Hom_{\St_M}(N_1,N_2) = \Hom_{\chains_*\Omega M}(\chains_* \fib_{N_1},\chains_* \fib_{N_2})\]
In this manner we will consider $\St_M$ as a full subcategory of $\Derived(\chains_*\Omega M)$.

\begin{remark}
In~\cite{BlumbergCohenTeleman} $k$ is a field, $\chains_*(\Omega M;k)$ is a dga and $\St_M$ is enriched over chain-complexes. Strictly speaking this is different from the setup presented here, however there is an easy translation between the two contexts. Let $Hk$ be the Eilenberg-Mac Lane spectrum of $k$, then $\chains_*(\Omega M; Hk)$ is an $Hk$-algebra. An mentioned above, Shipley's results~\cite{ShipleySpectraDGA} show that the two derived categories $\Derived(\chains_*(\Omega M;k))$ and $\Derived(\chains_*(\Omega M;Hk))$ are equivalent. Moreover, the machinery developed in~\cite{ShipleySpectraDGA} provides a way of translating between the two enrichments.
\end{remark}

One important fact, noted in~\cite{BlumbergCohenTeleman}, is that every object of $\St_M$ is compact in $\Derived(\chains_*\Omega M)$. We will make repeated use of this fact in the coming sections.


\section{Localizing subcategories and localization}
\label{sec: Localizing subcategories and localization}

This section reviews notions of localization and colocalization and related concepts. Throughout this section $\Derived$ denotes the derived category of an $\sphere$-algebra $R$.

\subsection*{Thick and localizing subcategories}
A \emph{thick subcategory} of $\Derived$ is a full triangulated subcategory closed under retracts. A \emph{localizing subcategory} is a thick subcategory that is also closed under taking coproducts. The thick (resp. localizing) subcategory \emph{generated} by a given class of objects $\class$ in $\Derived$ is the smallest thick (resp. localizing) subcategory containing $\class$, this subcategory is denoted by $\thick_\Derived(\class)$ (resp. $\loc_\Derived(\class)$).

\begin{remark}
We shall also employ the following terminology from~\cite{DwyerGreenleesIyengar}. For $X$ and $Y$ in $\Derived$ we say that $X$ \emph{builds} $Y$ if $Y \in \loc_\Derived (X)$. We say that $X$ \emph{finitely builds} $Y$ if $Y\in \thick_\Derived(X)$.
\end{remark}

Note that since $\Derived$ is the derived category of $R$, the thick subcategory generated by $R$ is $\Derived^\compact$ and the localizing subcategory generated by $R$ is $\Derived$. Moreover, if $\class \subset \Derived^\compact$ is a set of compact objects then, by a result of Neeman~\cite[Lemma 2.2]{NeemanConnectLocalSmash},
\[ \thick_\Derived(\class) = \loc_\Derived (\class) \cap \Derived^\compact\]
In particular, if $X$ and $Y$ are compact objects of $\Derived$ and $X$ builds $Y$ then $X$ finitely builds $Y$.

\subsection{Localization}
A \emph{localization functor} on $\Derived$ is an exact functor $L: \Derived \to \Derived$ together with a natural morphism $\eta:1_\Derived \to L$ such that $L \eta=\eta L$ and $L\eta$ is a natural isomorphism. One important property of a localization functor $L$ is that $\eta$ induces a natural isomorphism
\[ \hom_\Derived(X,LY) \cong \hom_\Derived(LX,LY) \]
Dually there is the concept of a \emph{colocalization functor}, which is a functor $\Gamma: \Derived \to \Derived$ with a natural morphism $\mu:\Gamma \to 1_\Derived$ such that $\Gamma \mu = \mu \Gamma$ and $\Gamma \mu$ is a natural isomorphism. The natural morphism $\mu$ induces an isomorphism
\[\hom_\Derived(\Gamma X,Y) \cong \hom_\Derived(\Gamma X,\Gamma Y) \]

For any functor $F: \Derived \to \Derived$ the \emph{kernel of $F$}, denoted $\Ker F$, is the full subcategory of $\Derived$ whose objects are those satisfying $FX\cong 0$. The \emph{essential image} of $F$ is the full subcategory $\Im F$ consisting of objects $X$ such that $X \cong FY$ for some $Y$. It is easy to see that any localization functor is determined, up to a unique isomorphism, by its essential image. The same goes for colocalization.

It is well known that any localization functor $L$ gives rise to a colocalization functor $\Gamma$ such that for any $X \in \Derived$ there is an exact triangle
\[ \Gamma X \xrightarrow{\mu_X} X \xrightarrow{\eta_X} LX \]
and such that $\Ker L = \Im \Gamma$ and $\Ker \Gamma = \Im L$. Similarly, a colocalization $\Gamma$ gives rise to a localization $L$ having the same properties as above. It follows that a localization (or colocalization) functor is determined also by its kernel.

\subsection*{Dwyer-Greenlees localization and colocalization}
In~\cite{DwyerGreenlees}, Dwyer and Greenlees introduced two types of localizations and showed how to compute them. We recall their definitions.

\begin{definition}
Fix an object $W$ in $\Derived$. A morphism $f:X \to Y$ in $\Derived$ is called a \emph{$W$-equivalence} if $\ext_A^*(W,f)$ is an isomorphism. An object $X$ is \emph{$W$-null} if $\ext_R^*(W,X)=0$, and $X$ is \emph{$W$-torsion} if $\ext_R^*(W,N)=0$ for any $W$-null object $N$. We say $X$ is \emph{$W$-complete} if $\ext_R^*(N,X)=0$ for any $W$-null object $N$. An object $C$ is a \emph{$W$-colocalization} of $X$ if $C$ is $W$-torsion and there is a $W$-equivalence $C \to X$. A complex $N$ is a \emph{$W$-nullification} of $X$ if there is a triangle $C \to X \to N$ such that $C$ is $W$-torsion and $N$ is $W$-null (note that this immediately implies $C$ is a $W$-colocalization of $X$). Finally, an object $C$ is a \emph{$W$-completion} of $X$ if $C$ is $W$-complete and there is a $W$-equivalence $X \to C$. Clearly, $W$-nullification and $W$-completion are both localizations.
\end{definition}

It is easy to see that the localizing subcategory generated by $W$ is contained in the full subcategory of $W$-torsion objects. By arguments of Hirschhorn~\cite{Hirschhorn} there is always a colocalization functor whose essential image is $\loc_\Derived(W)$. It follows that $\loc_\Derived(W)$ is in fact the subcategory of $W$-torsion objects. In other words, $X$ is $W$-torsion if and only if $W$ builds $X$.

When $W$ is compact, Dwyer and Greenlees provide formulas for $W$-completion and $W$-colocalization. Their results are summarized in the following theorem.
\begin{theorem}[Dwyer and Greenlees~\cite{DwyerGreenlees}]
\label{thm: Dwyer-Greenlees colocalization and completion formulas}
Let $W$ be a compact object of $\Derived(R)$ and let $\ee$ be the derived endomorphism $k$-algebra $\End_R(W)$. Then for any object $X \in \Derived$ the natural morphism
\[ \Hom_R(W,X)\otimes_\ee W \to X\]
is a $W$-colocalization of $X$, and the natural morphism
\[X \to  \Hom_{\ee^\op}(\Hom_R(W,R),\Hom_R(W,X)) \]
is $W$-completion. Moreover, the natural morphism $R \to \Hom_{\ee^\op}(\Hom_R(W,R),\Hom_R(W,R))$ is a morphism of $R^e$-modules.
\end{theorem}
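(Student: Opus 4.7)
The plan is to exploit the derived adjunction
\[
(-\otimes_\ee W) : \Derived(\ee) \rightleftarrows \Derived(R) : \Hom_R(W,-),
\]
whose counit at $X$ is precisely the first map $\epsilon_X$ in the statement. Via the tensor-Hom adjunction, the second map $\eta_X$ becomes $\Hom_R(\epsilon_R, X)$ under the identification
\[
\Hom_R\bigl(\Hom_R(W,R) \otimes_\ee W,\ X\bigr) \;\cong\; \Hom_{\ee^\op}\bigl(\Hom_R(W,R),\ \Hom_R(W,X)\bigr).
\]
The central computational tool is the compactness identification $\Hom_R(W, R) \otimes_R X \cong \Hom_R(W, X)$, valid for every $X$ because $W$ is compact.

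For part (1), $W$-torsion holds because $\Hom_R(W, X)$ lies in $\loc_{\Derived(\ee)}(\ee) = \Derived(\ee)$ while the exact, coproduct-preserving functor $-\otimes_\ee W$ sends $\ee$ to $W$, placing its image inside $\loc_\Derived(W)$. The $W$-equivalence reduces to showing $\Hom_R(W, M \otimes_\ee W) \cong M$ naturally for every $M \in \Derived(\ee)$; I would verify this on $M = \ee$ using compactness of $W$ and then extend to all of $\Derived(\ee)$ by the localizing-subcategory argument, after which the triangle identity for the adjunction finishes the job.

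For part (2), write $\Gamma R := \Hom_R(W, R) \otimes_\ee W$, which carries a natural $R^e$-module structure. The key identity, obtained by rearranging tensor products and invoking the compactness identification, is
\[
\Gamma R \otimes_R N \;\cong\; \Hom_R(W, N) \otimes_\ee W
\]
for every left $R$-module $N$. Specializing to $N = W$ gives $\ee \otimes_\ee W = W$, and tensor-Hom adjunction then yields $\Hom_R(W, \Hom_R(\Gamma R, X)) \cong \Hom_R(W, X)$ naturally; tracing the construction shows this is precisely $\Hom_R(W, \eta_X)$, so $\eta_X$ is a $W$-equivalence. For $W$-completeness: for any $W$-null $N$, the same identity yields $\Gamma R \otimes_R N \cong 0$, hence $\Hom_R(N, \Hom_R(\Gamma R, X)) \cong \Hom_R(\Gamma R \otimes_R N, X) = 0$.

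The final $R^e$-module claim is bookkeeping: $\Hom_R(W, R)$ has a right $R$-action commuting with the right $\ee$-action, so $\Gamma R$ is an $R^e$-module, and the natural map $R \to \Hom_{\ee^\op}(\Hom_R(W, R), \Hom_R(W, R))$ is tautologically equivariant for both the left and right $R$-actions. The main obstacle I anticipate is carefully verifying the identity $\Gamma R \otimes_R N \cong \Hom_R(W, N) \otimes_\ee W$ at the derived level — the tensors are over different rings and must be balanced using the commuting bimodule structures on $\Hom_R(W, R)$ and on $W$ — but once it is in place, parts (1) and (2) fall out cleanly.
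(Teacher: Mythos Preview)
The paper does not prove this theorem: it is stated as a result of Dwyer and Greenlees and cited from~\cite{DwyerGreenlees} without argument. Your proposal is a faithful reconstruction of the original Dwyer--Greenlees proof (the adjunction $(-\otimes_\ee W) \dashv \Hom_R(W,-)$, compactness to show the unit is an equivalence, and the identification of completion with $\Hom_R(\Gamma R,-)$), so there is nothing in the paper to contrast it with.

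One remark on the point you flag as the main obstacle. The identity $\Gamma R \otimes_R N \simeq \Hom_R(W,N)\otimes_\ee W$ is indeed the crux, and the subtlety is exactly that the two tensor products are balanced over different rings via the two \emph{commuting right} actions (of $R$ and of $\ee$) on $W^\natural=\Hom_R(W,R)$. The clean way to handle this is to regard $W^\natural$ as a right $R\otimes_k\ee$-module; then both $(W^\natural\otimes_\ee W)\otimes_R N$ and $(W^\natural\otimes_R N)\otimes_\ee W$ are computed by $W^\natural\otimes_{R\otimes\ee}(W\otimes_k N)$, and the compactness isomorphism $W^\natural\otimes_R N\xrightarrow{\sim}\Hom_R(W,N)$ is right $\ee$-linear. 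Once you phrase it this way the derived bookkeeping is routine.
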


For later use we record the following simple lemma.
\begin{lemma}
\label{lem: building modules and completeness}
Let $X$ and $Y$ be $R$-modules. If $R$ is $Y$-complete and $X$ builds $Y$ then $R$ is also $X$-complete.
\end{lemma}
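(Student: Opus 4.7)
The plan is to unwind the definitions and show that $X$-null objects are automatically $Y$-null when $X$ builds $Y$; then the $Y$-completeness hypothesis on $R$ immediately gives $X$-completeness.

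More concretely, suppose $N$ is an $X$-null $R$-module, so that $\ext^*_R(X,N)=0$. Consider the full subcategory
\[ \mathcal{N}=\{Z \in \Derived \;|\; \ext^*_R(Z,N)=0\}. \]
I would first check that $\mathcal{N}$ is a localizing subcategory of $\Derived$: it is closed under shifts and triangles by the long exact sequence of $\ext_R^*(-,N)$, closed under retracts since $\ext^*_R(-,N)$ is additive, and closed under arbitrary coproducts because $\ext^*_R(\bigoplus_i Z_i, N) \cong \prod_i \ext^*_R(Z_i, N)$.

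Since $X \in \mathcal{N}$ by assumption, the localizing subcategory generated by $X$ is contained in $\mathcal{N}$. By hypothesis $Y \in \loc_\Derived(X)$, so $Y \in \mathcal{N}$, i.e., $\ext^*_R(Y,N)=0$. Thus $N$ is $Y$-null.

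Finally, applying the assumption that $R$ is $Y$-complete to the $Y$-null object $N$ yields $\ext^*_R(N,R)=0$. Since $N$ was an arbitrary $X$-null object, this shows $R$ is $X$-complete. I do not expect any genuine obstacle here; the only point requiring care is the verification that $\mathcal{N}$ is localizing, which is standard.
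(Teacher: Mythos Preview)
Your argument is correct and follows the same route as the paper's proof; the paper simply asserts in one line that an $X$-null module is automatically $Y$-null when $X$ builds $Y$, whereas you spell out that step by exhibiting $\mathcal{N}$ as a localizing subcategory. The remaining steps are identical.
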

\begin{proof}
Let $N$ be an $X$-null module. Since $X$ builds $Y$ then $N$ is also $Y$-null. Because $R$ is $Y$-complete then $\ext_R^*(N,R)=0$. Hence $R$ is $X$-complete.
\end{proof}

\subsection*{Within the main example}
Recall that the main example we deal with is when $M$ is a simply-connected, compact, manifold, $k$ is the Eilenberg-Mac Lane spectrum of a commutative ring and $R = \chains_* \Omega M$. By results of Dwyer, Greenlees and Iyengar~\cite{DwyerGreenleesIyengar}, $k$ is compact as an $R$-module. Since $M$ satisfies Poincar\'e duality, the results of~\cite{DwyerGreenleesIyengar} also show that $\Hom_R(k,R) \sim \Sigma^{-d} k$. The next two results are well known.

\begin{lemma}
The object $k$ is $k$-complete in $\Derived(R)$.
\end{lemma}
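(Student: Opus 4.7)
The plan is to exploit two facts: that the class of $k$-complete $R$-modules forms a thick subcategory of $\Derived(R)$, and that $k$ is compact as an $R$-module. Together, these reduce the claim to showing that $R$ itself is $k$-complete.

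First I would verify the closure statement. By definition, $X$ is $k$-complete iff $\ext_R^*(N,X) = 0$ for every $k$-null module $N$, so the full subcategory of $k$-complete modules is the right-orthogonal of the class of $k$-null modules. Right-orthogonals in a triangulated category are automatically closed under shifts and retracts and satisfy the two-out-of-three property on exact triangles: if $X' \to X \to X''$ is a triangle with any two of the terms $k$-complete, the long exact sequence obtained by applying $\ext_R^*(N,-)$ with $N$ a $k$-null module forces the third term to be $k$-complete too. Hence the $k$-complete objects form a thick subcategory of $\Derived(R)$. Since $k$ is compact, $k \in \thick_{\Derived(R)}(R) = \Derived(R)^\compact$. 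Consequently, once $R$ is known to be $k$-complete, every object of $\thick_{\Derived(R)}(R)$ --- in particular $k$ itself --- is $k$-complete.

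For the remaining step I would apply Theorem~\ref{thm: Dwyer-Greenlees colocalization and completion formulas} with $W=k$ and $X=R$. The $k$-completion map for $R$ takes the form
\[
R \longrightarrow \Hom_{\ee^{\op}}\bigl(\Hom_R(k,R),\,\Hom_R(k,R)\bigr),
\]
where $\ee = \End_R(k)$. Substituting the Poincar\'e duality equivalence $\Hom_R(k,R)\simeq \Sigma^{-d}k$ (established in the paragraph preceding the lemma) reduces the target to $\End_{\ee^{\op}}(k)$, so the task becomes verifying that the natural map $R \to \End_{\ee^{\op}}(k)$ is a weak equivalence. This is exactly the double-centralizer/Koszul duality equivalence for the simply-connected Poincar\'e duality space $M$, one of the central outputs of~\cite{DwyerGreenleesIyengar}: the fact that $\Hom_R(k,R)$ is equivalent to a shift of $k$, together with compactness of $k$ and simple-connectedness of $M$, ensures that the $R$-action on $k$ recovers $R$ as the $\ee^{\op}$-endomorphism spectrum of $k$. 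This Koszul duality step is the only substantive input; all other ingredients are formal consequences of the definitions and of Theorem~\ref{thm: Dwyer-Greenlees colocalization and completion formulas}.
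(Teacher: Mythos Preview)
Your argument is correct but takes a genuinely different route from the paper. The paper does \emph{not} reduce to showing $R$ is $k$-complete; it verifies the defining condition for $k$ directly. For a $k$-null module $N$, the paper uses that $k$-colocalization is smashing (so $\Gamma_k R \otimes_R N \simeq 0$) and that $k$ is built by $\Gamma_k R$ to deduce $k \otimes_R N \simeq 0$; then Poincar\'e duality and adjunction give
\[
\Hom_R(N,k) \simeq \Hom_R\bigl(N, \Sigma^d \Hom_R(k,R)\bigr) \simeq \Hom_R(k \otimes_R N, \Sigma^d R) \simeq 0.
\]
This is elementary once one has the smashing property of $\Gamma_k$ from~\cite{DwyerGreenlees} and the equivalence $\Hom_R(k,R)\simeq \Sigma^{-d}k$. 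Your approach instead invokes the full double-centralizer equivalence $R \simeq \End_{\ee^{\op}}(k)$ from~\cite{DwyerGreenleesIyengar}, a heavier external input. Indeed, the paper only obtains $k$-completeness of $R$ \emph{later} (Corollary~\ref{cor: k in thick means R is complete}), under the additional hypothesis that $\pi_*R$ is polynomial, via a Postnikov-tower argument that depends on the very lemma you are proving. So your route short-circuits that chain at the cost of importing a stronger theorem, while the paper's route is self-contained and keeps the logical dependencies flowing in the intended direction.
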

\begin{proof}
Let $\Gamma_k R$ be the $k$-colocalization of $R$. From~\cite{DwyerGreenlees} we learn that $\Gamma_k R$ is in fact an $R$-bimodule and that $k$-colocalization of any $R$-module $X$ is given by the derived tensor product $\Gamma_k R \otimes_R X$. It follows that for any $k$-null module $N$, $\Gamma_k R \otimes_R N = 0$. Since $k$ is built by $\Gamma_k R$, it follows that also $k\otimes_R N =0$. Hence:
\[ \Hom_R(N,k) \simeq \Hom_R(N,\Sigma^d \Hom_R(k,R)) \simeq \Hom_R(k\otimes_R N, \Sigma^d R) = 0\]
\end{proof}

\begin{corollary}
Let $X$ be an $R$-module such that $\pi_i X = 0$ for $i<0$ and $\pi_i X$ is a product of copies of $k$ for every $i\geq 0$. Then $X$ is $k$-complete.
\end{corollary}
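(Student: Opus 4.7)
The plan is to exhibit $X$ as an $R$-module built from $k$ using products, suspensions, exact triangles, and tower homotopy limits, and to show that each of these operations preserves $k$-completeness. The preceding lemma then supplies the base case.

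First I would record three closure properties of the full subcategory $\class \subset \Derived(R)$ of $k$-complete modules. Since $\Hom_R(N,-)$ carries products to products, $\class$ is closed under arbitrary products. For any $k$-null $N$, applying $\Hom_R(N,-)$ to an exact triangle yields an exact triangle, so any triangle with two vertices contractible has its third vertex contractible; hence $\class$ is closed under exact triangles and suspensions. Combining these properties with the Milnor triangle
\[ \mathrm{holim}_n X_n \to \prod_n X_n \xrightarrow{1-\mathrm{shift}} \prod_n X_n \]
shows $\class$ is also closed under homotopy limits of towers. The preceding lemma gives $k\in\class$; together with closure under products and suspensions it follows that the $R$-module $\Sigma^n \prod_J k$ belongs to $\class$ for every $n\in\Int$ and every indexing set $J$. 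Since $\pi_i = \hom_{\Derived(R)}(\Sigma^i R, -)$ commutes with products, $\prod_J k$ has homotopy concentrated in degree $0$ with value $\prod_J \pi_0 k$, so it is precisely the Eilenberg--Mac Lane $R$-module on a product of copies of $\pi_0 k$.

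Next I would analyse $X$ via its Postnikov tower. Because $X$ is connective and $R$ is a connective $k$-algebra, the tower converges: $X \simeq \mathrm{holim}_n X_n$ where $X_n$ is the $n$-th truncation. The fibre of $X_n \to X_{n-1}$ is an $R$-module with homotopy concentrated in degree $n$ equal to $\pi_n X$, which by hypothesis is a product of copies of $\pi_0 k$; by the previous paragraph this fibre is equivalent to $\Sigma^n \prod_{J_n} k$ and lies in $\class$. Induction on $n$, starting from $X_0 \simeq \prod_{J_0} k$ and using closure under exact triangles, shows that every $X_n$ lies in $\class$. Closure under homotopy limits then delivers $X \simeq \mathrm{holim}_n X_n \in \class$, as required.

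The main obstacle I anticipate is the identification of each Postnikov fibre of $X$ with $\Sigma^n \prod_{J_n} k$ as an $R$-module. This rests on two facts which should be checked carefully: that an $R$-module with homotopy concentrated in a single degree is determined up to equivalence by that homotopy group regarded as a $\pi_0 R$-module (via the augmentation $\pi_0 R \to \pi_0 k$), and that the Eilenberg--Mac Lane $R$-module construction commutes with products. Both are standard in the setting of modules over a connective $k$-algebra such as $R=\chains_*\Omega M$.
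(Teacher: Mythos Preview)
Your proposal is correct and follows essentially the same route as the paper's proof: reduce to a single homotopy group, induct up the Postnikov tower using closure of $k$-complete modules under triangles, and pass to the homotopy limit for the unbounded case. The paper is simply terser about the closure properties you spell out, and for the step you flag as an obstacle---identifying an $R$-module with a single nonzero homotopy group as a suspension of a product of copies of $k$---it cites \cite[Proposition 3.9]{DwyerGreenleesIyengar} rather than arguing it directly.
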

\begin{proof}
First, if $\pi_i X$ is non zero only for a single index $i=i_0$, then By~\cite[Proposition 3.9]{DwyerGreenleesIyengar} $X$ is isomorphic in $\Derived(R)$ to a suspension of a product of copies of $k$. It follows from the previous lemma and the definition of completeness that $X$ is $k$-complete. Next, suppose that $\pi_i X$ is non zero only for $0 \leq i \leq n$ for some $n$. Using Postnikov sections and the previous case, an inductive argument on $n$ shows that $X$ is $k$-complete. Finally, suppose that $X$ has infinitely many non-zero homotopy groups. Let $X_n$ be the $n$'th Postnikov section of $X$. Hence there is a natural morphism $X \to X_n$ which is an isomorphism on $\pi_i$ for $i \leq n$ and $\pi_j X_n = 0$ for $j>n$. Since $X$ is the homotopy limit of $X_n$, and every $X_n$ is $k$-complete, then so is $X$.
\end{proof}

Combining the last two results with Lemma~\ref{lem: building modules and completeness} we get the following corollary, which explains the connection of thick subcategories to the question of completeness of $R$.
\begin{corollary}
\label{cor: k in thick means R is complete}
Suppose that $\pi_*R$ is a polynomial ring and let $W$ be a compact $R$-module. If $k \in \thick_{\Derived(R)}W$ then $R$ is also $W$-complete.
\end{corollary}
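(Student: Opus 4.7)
The overall strategy is a two-step reduction: first I would establish that $R$ itself is $k$-complete by applying the immediately preceding corollary, and then, using the hypothesis $k \in \thick_{\Derived(R)}(W)$, I would upgrade $k$-completeness to $W$-completeness by invoking Lemma~\ref{lem: building modules and completeness}.

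For the first step, the preceding corollary requires that $\pi_i R = 0$ for $i < 0$ and that each $\pi_i R$ be a product of copies of $k$. Both conditions follow directly from the assumption that $\pi_*R$ is a polynomial ring (implicitly with generators in non-negative degrees, as in the examples of interest): vanishing in negative degrees is automatic, and each graded piece of a polynomial ring over $\pi_0 k$ on finitely many generators is a finitely generated free $\pi_0 k$-module, i.e.\ a finite product of copies of $\pi_0 k$. This is precisely the kind of homotopy group realized by a product of copies of $k$ in the relevant degree, so the preceding corollary applies and $R$ is $k$-complete.

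For the second step, since every thick subcategory is contained in the corresponding localizing subcategory, the hypothesis $k \in \thick_\Derived(W)$ yields $k \in \loc_\Derived(W)$, i.e.\ $W$ builds $k$ in the terminology introduced earlier. Applying Lemma~\ref{lem: building modules and completeness} with $X = W$ and $Y = k$ then immediately gives that $R$ is $W$-complete. The argument is essentially a direct synthesis of the two preceding results, so there is no substantive obstacle; the only point worth verifying carefully is the identification of each $\pi_i R$ as a product of copies of $k$ in the sense required by the preceding corollary, and this is immediate from the polynomial structure of $\pi_*R$.
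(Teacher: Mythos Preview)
Your proposal is correct and follows exactly the route the paper indicates: apply the preceding corollary to see that $R$ is $k$-complete (using that each graded piece of the polynomial ring $\pi_*R$ is a finite free $\pi_0 k$-module), and then invoke Lemma~\ref{lem: building modules and completeness} with $X=W$, $Y=k$ to pass from $k$-completeness to $W$-completeness. The paper states only that the corollary follows by ``combining the last two results with Lemma~\ref{lem: building modules and completeness}'', and your argument is precisely that combination spelled out.
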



\section{Hochschild cohomology and Dwyer-Greenlees completion}
\label{sec: Hochschild cohomology and Dwyer-Greenlees completion}

In this section we explain the origin of the natural morphism $\varphi_X: HH^*(R) \to HH^*\End_R(X)$ for an $R$-module $X$. As before, $R$ is a $k$-algebra where $k$ is a commutative $\sphere$-algebra.

\subsection*{Definition of the natural morphism $\varphi$}
We first record the following lemma, which is a simple modification of a result of Lazarev~\cite[Lemma 3.1]{LazarevSpacesOfMultiplicative}. The details of how to deduce Lemma~\ref{lem: tri-modules} from~\cite[Lemma 3.1]{LazarevSpacesOfMultiplicative} are left to the reader.
\begin{lemma}
\label{lem: tri-modules}
Let $A$, $B$ and $C$ be $k$-algebras. Let $X$ be an $A\otimes B^\op$-module, let $Y$ be a $B\otimes C^\op$-module and let $Z$ be a $A\otimes C^\op$-module. Then there is a natural equivalence of $k$-modules:
\[ \Hom_{A \otimes_k C^\op} (X\otimes_B Y, Z) \cong \Hom_{B\otimes_k C^\op}(Y, \Hom_A(X,Z))\]
In particular, there is a natural isomorphism
\[ \ext_{A \otimes_k C^\op}^* (X\otimes_B Y, Z) \cong \ext_{B\otimes_k C^\op}^*(Y, \Hom_A(X,Z))\]
\end{lemma}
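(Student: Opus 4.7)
The plan is to recognize this as a bimodule version of the standard tensor-hom adjunction and to derive it by carefully tracking module structures through the familiar isomorphism $\Hom(X \otimes_B Y, Z) \cong \Hom_B(Y, \Hom(X, Z))$. Since the paper's convention is that $\otimes$ and $\Hom$ are implicitly derived, the real content is verifying that the equivalence persists after passing to suitable cofibrant replacements; Lazarev's Lemma~3.1 supplies essentially this argument in a slightly different set-up.

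First I would establish the equivalence on the point-set level. Viewing $X$ as a left $A$-, right $B$-module, $Y$ as a left $B$-, right $C$-module, and $Z$ as a left $A$-, right $C$-module, the assignment
\[ \Phi: \Hom_{A\otimes_k C^{\op}}(X \otimes_B Y, Z) \to \Hom_{B\otimes_k C^{\op}}(Y, \Hom_A(X, Z)) \]
sending $\phi$ to $y \mapsto (x \mapsto \phi(x \otimes y))$ is the usual Hom-tensor adjunction. A direct check shows that $\Hom_A(X,Z)$ carries a left $B$-action via $(b \cdot f)(x) = f(xb)$ and a right $C$-action via $(f \cdot c)(x) = f(x) c$, making it a $B \otimes_k C^{\op}$-module, and that $\Phi$ is a bijection with inverse $\psi \mapsto (x \otimes y \mapsto \psi(y)(x))$.

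To promote this to the derived statement, I would cofibrantly replace $X$ as an $A \otimes_k B^{\op}$-module and $Y$ as a $B \otimes_k C^{\op}$-module, so that $X \otimes_B Y$ represents the derived tensor product and $\Hom_A(X, Z)$ represents the derived mapping object. A module cofibrant over $A \otimes_k B^{\op}$ is in particular cofibrant over $A$, so $\Hom_A(X, Z)$ has the correct homotopy type, and the formulas above promote its $B \otimes_k C^{\op}$-module structure to something homotopically meaningful. With these replacements in place, the point-set map $\Phi$ computes both derived mapping spectra simultaneously, yielding the asserted equivalence of $k$-modules. The $\ext^*$ statement then follows by applying $\pi_{-*}$.

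The main obstacle, and the reason the author defers to Lazarev, is verifying that a single cofibrant replacement of $X$ simultaneously makes $\Hom_A(X, Z)$ a model for the derived $\Hom$, makes $X \otimes_B -$ compute the derived tensor product, and produces the correct $B \otimes_k C^{\op}$-module structure on $\Hom_A(X, Z)$ up to homotopy. In the EKMM framework these compatible cofibrancy properties are available, and~\cite[Lemma~3.1]{LazarevSpacesOfMultiplicative} supplies precisely this verification in the case $C = k$; adapting that argument to general $C$ is routine, which is why the author leaves it to the reader.
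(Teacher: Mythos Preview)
Your proposal is correct and matches the paper's approach exactly: the paper gives no proof at all, simply stating that the lemma is a modification of Lazarev's \cite[Lemma~3.1]{LazarevSpacesOfMultiplicative} and leaving the details to the reader. You have supplied precisely those details---the underived tensor--hom adjunction with bimodule structures tracked, and the cofibrancy check needed to pass to the derived setting---so your write-up is in fact more complete than the paper's.
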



To get the morphism $\varphi$ we follow a trick of Koenig and Nagase~\cite{KoenigNagase}.
\begin{lemma}
\label{lem: isomorphisms in varphi}
Let $W$ be a compact $R$-module, let $\ee = \End_R(W)$ and let $W^\natural = \Hom_R(W,R)$. Then there is a natural isomorphism
\[\ext^*_{R^e}(R,\Hom_{\ee^\op}(W^\natural,W^\natural)) \cong \ext^*_{\ee^e}(\ee, \ee)\]
\end{lemma}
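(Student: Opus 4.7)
The plan is to reduce both sides of the claimed isomorphism to the common expression $\ext^*_{R\otimes\ee}(W,W)$, via repeated applications of Lemma~\ref{lem: tri-modules} and the biduality equivalence $\Hom_{R^\op}(W^\natural,R)\simeq W$ valid for the compact $R$-module $W$.

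First I would transform the left-hand side. Applying Lemma~\ref{lem: tri-modules} with $A=\ee^\op$, $B=C=R$, $X=Z=W^\natural$ (viewed as an $\ee^\op\otimes R^\op$-module via its two right actions) and $Y=R$, the identity $W^\natural\otimes_R R\cong W^\natural$ yields
\[ \ext^*_{R^e}(R,\Hom_{\ee^\op}(W^\natural,W^\natural)) \;\cong\; \ext^*_{\ee^\op\otimes R^\op}(W^\natural,W^\natural). \]
I would then unfold $W^\natural=\Hom_R(W,R)$ in the target slot and apply Lemma~\ref{lem: tri-modules} again to rewrite the right side as $\ext^*_{R^e}(W^\natural\otimes_\ee W,R)$; a further application of Lemma~\ref{lem: tri-modules}, this time unpacking the tensor $W^\natural\otimes_\ee W$, produces
\[ \ext^*_{R^e}(W^\natural\otimes_\ee W,R) \;\cong\; \ext^*_{R\otimes\ee}(W,\Hom_{R^\op}(W^\natural,R)). \]
Biduality for the compact module $W$ identifies $\Hom_{R^\op}(W^\natural,R)\simeq W$ as $R\otimes\ee$-modules (both are coproduct-preserving triangulated functors of $W$ that agree on the generator $R$, and the natural map is easily checked to intertwine the left $\ee$-action). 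This reduces the left-hand side of the lemma to $\ext^*_{R\otimes\ee}(W,W)$.

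Next I would treat the right-hand side in the same spirit. Writing $\ee=\Hom_R(W,W)$ in the target slot and applying Lemma~\ref{lem: tri-modules} with $A=R$, $B=C=\ee^\op$, $X=Z=W$ and $Y=\ee$, together with the reduction $W\otimes_{\ee^\op}\ee\cong W$, yields $\ext^*_{\ee^e}(\ee,\ee)\cong\ext^*_{R\otimes\ee}(W,W)$. Composing the two reductions completes the proof.

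The main obstacle will be the careful bookkeeping of bimodule structures at each step, since Lemma~\ref{lem: tri-modules} is sensitive to the left/right parsings. In particular one must identify $R^e=R\otimes R^\op$ with $R^\op\otimes R$, and similarly $\ee^e$ with $\ee^\op\otimes\ee$, through swaps of tensor factors, and must verify that the biduality map remains equivariant for the residual left $\ee$-action on $W$ arising from its role as an $R\otimes\ee$-module.
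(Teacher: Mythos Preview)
Your argument is correct and uses the same two ingredients as the paper's proof --- iterated applications of Lemma~\ref{lem: tri-modules} together with biduality for the compact module $W$ --- and even your first reduction to $\ext^*_{\ee^\op\otimes R^\op}(W^\natural,W^\natural)$ coincides with the paper's. The only cosmetic difference is the choice of pivot: the paper continues from $\ext^*_{\ee^\op\otimes R^\op}(W^\natural,W^\natural)$ to $\ext^*_{\ee^e}(\ee,\Hom_{R^\op}(W^\natural,W^\natural))$ and then identifies $\Hom_{R^\op}(W^\natural,W^\natural)\simeq\ee$, whereas you meet in the middle at $\ext^*_{R\otimes\ee}(W,W)$ and reduce the right-hand side separately; your route trades the identification $\Hom_{R^\op}(W^\natural,W^\natural)\simeq\ee$ for the slightly simpler biduality $\Hom_{R^\op}(W^\natural,R)\simeq W$, but the content is the same.
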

\begin{proof}
There are the following isomorphisms
\begin{align*}
\ext^*_{R^e}(R,\Hom_{\ee^\op}(W^\natural,W^\natural)) &\cong \ext^*_{\ee^\op \otimes R^\op}(W^\natural \otimes_R R , W^\natural) \\
&\cong \ext^*_{\ee^\op \otimes R^\op}(W^\natural,W^\natural) \\
&\cong \ext^*_{R^\op \otimes \ee^\op}(W^\natural \otimes_\ee \ee, W^\natural) \\
&\cong \ext^*_{\ee^e}(\ee, \Hom_{R^\op}(W^\natural,W^\natural)) \\
&\cong \ext^*_{\ee^e}(\ee,\ee)
\end{align*}
Except for the last line, all the isomorphisms follow from Lemma~\ref{lem: tri-modules}. The last line follows from the fact that $\Hom_{R^\op}(W^\natural,W^\natural)$ is equivalent to $\ee = \Hom_R(W,W)$, which we now explain. Since $W$ is compact, so is $W^\natural$. Moreover, compactness implies that $W \simeq \Hom_{R^\op}(W^\natural,R^\op)$. Hence:
\begin{align*}
\Hom_{R^\op}(W^\natural,W^\natural) &\simeq \Hom_{R^\op}(W^\natural,R^\op) \otimes_{R^\op} W^\natural \\
&\simeq W^\natural \otimes_R \Hom_{R^\op}(W^\natural,R^\op) \\
&\simeq W^\natural \otimes_R W\\
&\simeq \Hom_R(W,R)\otimes_R W\\
&\simeq \Hom_R(W,W)
\end{align*}
\end{proof}

\begin{definition}
\label{def: definition of varphi}
For a compact $R$-module $W$ define $\varphi_W$ to be the composition:
\[ \ext^*_{R^e} (R,R) \to \ext^*_{R^e}(R,\Hom_{\ee^\op}(W^\natural,W^\natural)) \cong \ext^*_{\ee^e}(\ee, \ee)\]
where the leftmost morphism is induced by the Dwyer-Greenlees natural completion morphism $R \to \Hom_{\ee^\op}(W^\natural,W^\natural)$.
\end{definition}

\subsection*{Multiplicative property of $\varphi$}
As always, showing that a map is multiplicative is a complicated process. We start with a simple discussion. Let $A$ and $B$ be $k$-algebras and let $X$ be an $A \otimes B$-module, then $\Hom_B(X,X)$ is an $A$-bimodule. Moreover, $\Hom_B(X,X)$ is an algebra, where the multiplication map is the map induced by composition. Since $A$ is a coalgebra in $\Derived(A^e)$, then $\ext^*_{A^e}(A,\Hom_B(X,X))$ is a graded ring. We make this structure explicit in the following definition.

\begin{definition}
Define a pairing $\Psi:\ext^*_{A^e}(A,\Hom_B(X,X)) \times \ext^*_{A^e}(A,\Hom_B(X,X)) \to \ext^*_{A^e}(A,\Hom_B(X,X))$ in the following manner. Given maps $f:\Sigma^n A \to \Hom_B(X,X)$ and $g: \Sigma^m A \to \Hom_B(X,X)$ let $\Psi(f\times g)$ be the composition:
\[ \Sigma^{n+m}A \xrightarrow{\nabla} \Sigma^n A \otimes_A \Sigma^m A \xrightarrow{f\otimes g} \Hom_B(X,X)\otimes_A \Hom_B(X,X) \xrightarrow{\theta} \Hom_B(X,X)\]
Where $\nabla$ is the inverse of the multiplication map $A\otimes_A A \to A$ and $\theta$ is the composition (or multiplication) map. It is left to the reader to verify that $\Psi$ is associative and bilinear. The next lemma will also show that $\Psi$ is also unital.
\end{definition}

\begin{lemma}
\label{lem: general multiplicativity lemma}
The isomorphism
\[ \ext^*_{A^e}(A,\Hom_B(X,X)) \cong \ext^*_{A \otimes B}(X, X)\]
coming from Lemma~\ref{lem: tri-modules} cmmutes with the respective pairings (multiplication on the right hand side and $\Psi$ on the left hand side). Therefore the pairing $\Psi$ also has a unit.
\end{lemma}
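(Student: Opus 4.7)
The plan is to make the adjunction isomorphism of Lemma~\ref{lem: tri-modules} completely explicit and then check compatibility by a short diagram chase. Abbreviate $H = \Hom_B(X,X)$. Tracing through the proof of Lemma~\ref{lem: tri-modules}, the isomorphism $\Phi \colon \ext^*_{A^e}(A, H) \to \ext^*_{A \otimes B}(X, X)$ sends a morphism $f \colon \Sigma^n A \to H$ in $\Derived(A^e)$ to the composite
\[ \Sigma^n X \; \xleftarrow{\;\cong\;} \; \Sigma^n A \otimes_A X \; \xrightarrow{f \otimes_A 1_X} \; H \otimes_A X \; \xrightarrow{\mathrm{ev}} \; X, \]
where $\mathrm{ev}(\phi \otimes x) = \phi(x)$. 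The key identity to exploit is that the composition law $\theta \colon H \otimes_A H \to H$ is nothing but ``curried'' evaluation:
\[ \mathrm{ev} \circ (\theta \otimes 1_X) \; = \; \mathrm{ev} \circ (1_H \otimes \mathrm{ev}) \colon H \otimes_A H \otimes_A X \longrightarrow X, \]
which merely rewrites $(\phi_1 \circ \phi_2)(x) = \phi_1(\phi_2(x))$.

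Next I would unfold both sides. Plugging the definition of $\Psi(f, g)$ into the formula for $\Phi$ gives
\[ \Phi(\Psi(f,g)) \colon \Sigma^{n+m}X \cong \Sigma^{n+m}A \otimes_A X \xrightarrow{\nabla \otimes 1} \Sigma^n A \otimes_A \Sigma^m A \otimes_A X \xrightarrow{f \otimes g \otimes 1} H \otimes_A H \otimes_A X \xrightarrow{\theta \otimes 1} H \otimes_A X \xrightarrow{\mathrm{ev}} X. \]
On the other hand, since $\nabla$ is the inverse of $A \otimes_A A \xrightarrow{\sim} A$, the canonical identification $\Sigma^{n+m}X \cong \Sigma^n A \otimes_A \Sigma^m A \otimes_A X$ implicit in forming $\Phi(f) \circ \Phi(g)$ factors through the same $\nabla \otimes 1$; applying $\Phi(g)$ on the right two tensor factors and then $\Phi(f)$ on the left produces precisely the same composite, except with $\theta \otimes 1$ replaced by $1_H \otimes \mathrm{ev}$. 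By the identity displayed above these two composites agree, and hence $\Phi(\Psi(f,g)) = \Phi(f) \circ \Phi(g)$.

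For the unit assertion, the composition pairing on $\ext^*_{A\otimes B}(X,X)$ is unital with unit $1_X$; pulling this back along $\Phi^{-1}$ yields an element of $\ext^0_{A^e}(A, H)$, namely the structural algebra map $A \to H$ encoding the $A$-action on $X$, which is then a unit for $\Psi$. The main obstacle I anticipate is not conceptual but book-keeping: one must work with appropriate cofibrant replacements so that $f \otimes g$ and the various $\otimes_A$-factorizations are honest morphisms in the derived category, and one must verify that the canonical identification $\Sigma^{n+m}A \cong \Sigma^n A \otimes_A \Sigma^m A$ agrees with $\nabla$ up to the sign conventions in force. Once these conventions are pinned down the argument reduces entirely to naturality of $\mathrm{ev}$ together with the identity $(\phi_1 \circ \phi_2)(x) = \phi_1(\phi_2(x))$.
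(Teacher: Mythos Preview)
Your proposal is correct and follows essentially the same approach as the paper: both arguments make the adjunction explicit via the evaluation map, reduce the comparison to the identity $\mathrm{ev}\circ(\theta\otimes 1_X)=\mathrm{ev}\circ(1_H\otimes\mathrm{ev})$ together with the compatibility $u_{A\otimes_A X}\circ u_X=(\nabla\otimes 1_X)\circ u_X$, and obtain the unit by transporting $1_X$ across the isomorphism. The only differences are notational.
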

\begin{proof}
Let $f$ and $g$ be two morphisms $A \to \Hom_B(X,X)$ in $\Derived(A^e)$. The isomorphism in Lemma~\ref{lem: tri-modules} describes an adjunction between two functors. Hence we can identify the image of $f$ and $g$ under this isomorphism explicitly. Recall that the adjoint morphism $\hat{f}$ is given by the composition $\epsilon \circ (f\otimes 1_X) \circ u_X$ where $\epsilon:\Hom_B(X,X) \otimes_A X \to X$ is the evaluation map and $u_X: X \to A\otimes_A X$ is the inverse of the unit isomorphism. The product $\hat{f} \cdot \hat{g}$ is the composition of the two morphisms and is therefore given by
\[\hat{f} \cdot \hat{g} = \epsilon (f\otimes 1_X) u_X \epsilon (g \otimes 1_X) u_X \]
It is easy to see that this product is also equal to:
\[ \epsilon (1_{\Hom_B(X,X)} \otimes \epsilon) (f\otimes g \otimes 1_X) u_{A\otimes_A X}  u_X \]

Recall that the evaluation map is in fact isomorphic to the composition map: $\Hom_B(X,X) \otimes_A \Hom_B(B,X) \to \Hom_B(B,X)$. Since composition is associative then
\[ \epsilon (1_{\Hom_B(X,X)} \otimes \epsilon) = \epsilon (\theta \otimes 1_X)\]
We also note that $u_{B\otimes_B X}  u_X = (\nabla \otimes 1_X) u_X$, leaving it to the reader to verify. Altogether we have:
\begin{align*}
\hat{f} \cdot \hat{g} &=  \epsilon (\theta \otimes 1_X) (f\otimes g \otimes 1_X) (\nabla \otimes 1_X) u_X \\
&= \epsilon (\Psi(f\times g) \otimes 1_X) u_X \\
&= \hat{\Psi}(f \times g)
\end{align*}

Since the identity map $X \to X$ is a unit for $\ext^*_{A \otimes B}(X, X)$, its adjoint $A \to \Hom_B(X,X)$ is a unit for $\Psi$, showing that $\ext^*_{A^e}(A,\Hom_B(X,X))$ is a graded ring.
\end{proof}

\begin{proposition}
\label{pro: varphi is multiplicative}
Let $W$ be a compact $R$-module. Then the map $\varphi_W$ is a map of graded rings.
\end{proposition}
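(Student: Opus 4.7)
The plan is to factor $\varphi_{W}$ as a composition $\iota\circ \ell_{*}$ of two multiplicative maps, where $\ell_{*}$ is induced by the Dwyer--Greenlees completion map $\ell\colon R\to\Hom_{\ee^{\op}}(W^{\natural},W^{\natural})$ and $\iota$ is the chain of isomorphisms from the proof of Lemma~\ref{lem: isomorphisms in varphi}. The argument then splits into showing each factor preserves the appropriate products, after which unit-preservation follows automatically, as in Lemma~\ref{lem: general multiplicativity lemma}.

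For $\ell_{*}$, the key point is that $\ell$ is in fact a morphism of $k$-algebras, and not merely of $R^{e}$-modules as recorded in Theorem~\ref{thm: Dwyer-Greenlees colocalization and completion formulas}. This is a general feature of Dwyer--Greenlees completions: the target carries its composition multiplication $\theta$, and $\ell$ is the unit of a completion that can be realized as a fibrant replacement in an appropriate model category of $R$-algebras, under which it lifts to an algebra map. Granting this, define a pairing $\Psi$ on $\ext^{*}_{R^{e}}(R,\Hom_{\ee^{\op}}(W^{\natural},W^{\natural}))$ by the recipe given just before Lemma~\ref{lem: general multiplicativity lemma}, using $\theta$ in place of the multiplication on $R$. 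The standard Hochschild product on $\ext^{*}_{R^{e}}(R,R)$ is the very same recipe applied to $\mu_{R}\colon R\otimes_{R}R\to R$, so the fact that $\ell$ intertwines $\mu_{R}$ with $\theta$ implies $\ell_{*}$ carries the Hochschild product to $\Psi$, by an unwinding of definitions formally identical to the calculation in the proof of Lemma~\ref{lem: general multiplicativity lemma}.

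For $\iota$, I would inspect the chain of equivalences appearing in the proof of Lemma~\ref{lem: isomorphisms in varphi}. Three of them are instances of the adjunction of Lemma~\ref{lem: tri-modules} (with the harmless swap of $R^{\op}\otimes\ee^{\op}$ and $\ee^{\op}\otimes R^{\op}$ in the middle absorbed into the $\Ext$), and by Lemma~\ref{lem: general multiplicativity lemma} each such adjunction carries the $\Psi$-pairing on the bimodule side to the composition pairing on the endomorphism side. The remaining step is the equivalence $\Hom_{R^{\op}}(W^{\natural},W^{\natural})\simeq\ee$ used at the end of that proof, and the main obstacle is to verify this is an equivalence of $k$-algebras under composition on both ends. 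I would do this by tracing through the zig-zag $\Hom_{R^{\op}}(W^{\natural},W^{\natural})\simeq W^{\natural}\otimes_{R}W\simeq \Hom_{R}(W,W)$, which relies on compactness of $W$ via $W\simeq\Hom_{R^{\op}}(W^{\natural},R)$, and checking diagrammatically that composition on either side of each step corresponds under the evaluation and coevaluation morphisms of the compact object $W$. This is formal but fiddly; once completed, chaining everything together yields that $\varphi_{W}=\iota\circ\ell_{*}$ is a map of graded rings.
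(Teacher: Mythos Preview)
Your factorization $\varphi_W = \iota \circ \ell_*$ and your use of Lemma~\ref{lem: general multiplicativity lemma} for the adjunction steps in $\iota$ match the paper exactly. The differences are in how you handle the two remaining steps, and in both places the paper takes a shorter route via the Eckmann--Hilton argument.

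For the final isomorphism $\ext^*_{\ee^e}(\ee,\Hom_{R^\op}(W^\natural,W^\natural))\cong\ext^*_{\ee^e}(\ee,\ee)$, you propose to check that the zig-zag through $W^\natural\otimes_R W$ is an equivalence of $k$-algebras. The paper sidesteps this entirely: it observes that the $\Psi$-pairing, transported to $\ext^*_{\ee^e}(\ee,\ee)$, sends $f,g$ to $f\otimes_\ee g$, and Eckmann--Hilton then identifies this with composition. No tracing of evaluation/coevaluation through the zig-zag is needed.

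For $\ell_*$, your appeal to ``fibrant replacement in an appropriate model category of $R$-algebras'' is both heavier and less clearly justified than necessary; it is not obvious that such a model structure exists with the right properties. The paper again uses Eckmann--Hilton on the source to rewrite composition as $\nabla^{-1}(f\otimes_R g)\nabla$, after which multiplicativity of $\ell_*$ reduces to the elementary statement that for any $A\otimes B$-module $X$ the action map $A\to\Hom_B(X,X)$ is a map of $k$-algebras. This is the same endpoint you reach (that $\ell$ respects $\mu_R$ and $\theta$), but arrived at without any model-categorical input.
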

\begin{proof}
Recall that in Lemma~\ref{lem: isomorphisms in varphi} we used the following isomorphisms:
\begin{align*}
\ext^*_{R^e}(R,\Hom_{\ee^\op}(W^\natural,W^\natural)) &\cong \ext^*_{\ee^\op \otimes R^\op}(W^\natural \otimes_R R , W^\natural) \\
&\cong \ext^*_{\ee^\op \otimes R^\op}(W^\natural,W^\natural) \\
&\cong \ext^*_{R^\op \otimes \ee^\op}(W^\natural \otimes_\ee \ee, W^\natural) \\
&\cong \ext^*_{\ee^e}(\ee, \Hom_{R^\op}(W^\natural,W^\natural))
\end{align*}
These are all multiplicative by Lemma~\ref{lem: general multiplicativity lemma}. It remains to show that the isomorphism
\[ \ext^*_{\ee^e}(\ee, \Hom_{R^\op}(W^\natural,W^\natural)) \cong \ext^*_{\ee^e}(\ee, \ee)\]
and the map
\[\ext^*_{R^e} (R,R) \to \ext^*_{R^e}(R,\Hom_{\ee^\op}(W^\natural,W^\natural))\]
are multiplicative. For the former, note that the multiplication on $\ext^*_{\ee^e}(\ee, \ee)$ induced by the isomorphism from $\ext^*_{\ee^e}(\ee, \Hom_{R^\op}(W^\natural,W^\natural))$, is such that the product of two morphisms $f,g\in \ext^*_{\ee^e}(\ee, \ee)$ is the tensor product $f \otimes_\ee g$. By the Eckmann-Hilton argument, this is the same as the standard ring structure on $\ext^*_{\ee^e}(\ee,\ee)$ (which is given by composition).

For the latter, we again use the Eckmann-Hilton argument. Given $f,g \in \ext^*_{R^e} (R,R)$, the Eckmann-Hilton argument shows that the composition of these two morphisms is equal to their tensor product $f \otimes_R g$ (more precisely to $\nabla^{-1} f\otimes_R g \nabla$). Let $\gamma : R \to \Hom_{\ee^\op}(W^\natural,W^\natural)$ be the Dwyer-Greenlees $W$-completion map. Hence we need to show that
\[\gamma \nabla^{-1} f\otimes_R g \nabla\] is equal to
\[\mu (\gamma \otimes_R \gamma) (f \otimes_R g) \nabla\]
where $\mu : \Hom_{\ee^\op}(W^\natural,W^\natural) \otimes_R \Hom_{\ee^\op}(W^\natural,W^\natural) \to \Hom_{\ee^\op}(W^\natural,W^\natural)$ is the multiplication map of $\Hom_{\ee^\op}(W^\natural,W^\natural)$ (as an $R$-algebra). We see that it suffices to show that the Dwyer-Greenlees completion map $\gamma$ is a map of $k$-algebras.


More generally, suppose that $A$ and $B$ are $k$ algebras and that $X$ is an $A \otimes B$-module. To complete the proof it suffices to show that the natural map $A \to \Hom_B(X,X)$ is a map of $k$-algebras. We leave this last statement to the reader to verify.
\end{proof}

\subsection*{Within the main example}
As before $M$ is a simply-connected, compact, manifold, $k$ is the Eilenberg-Mac Lane spectrum of a commutative ring and $R = \chains_* \Omega M$.

\begin{proposition}
\label{pro: when varphi is an iso}
Suppose that $\pi_*R$ is a polynomial ring over $\pi_0 R$. Let $W$ be a compact $R$-module. If $k \in \thick_{\Derived(R)}W$ then
\[\varphi_W: H_{*+m}(\fl M;k) \cong HH^*(R) \to HH^*(\End_R(W))\]
is an isomorphism of graded rings.
\end{proposition}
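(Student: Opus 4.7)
The plan is to reduce the statement to showing that, under our hypotheses, the Dwyer--Greenlees completion map $\gamma: R \to \Hom_{\ee^\op}(W^\natural, W^\natural)$ is already a quasi-isomorphism of $R^e$-modules, where $\ee = \End_R(W)$ and $W^\natural = \Hom_R(W,R)$. Once this is established, applying $\ext^*_{R^e}(R,-)$ to $\gamma$ gives an isomorphism, and composing with the isomorphism of Lemma~\ref{lem: isomorphisms in varphi} shows $\varphi_W$ is a bijection (it is already a ring map by Proposition~\ref{pro: varphi is multiplicative}). The identification $HH^*(R)\cong H_{*+m}(\fl M;k)$ is then Malm's theorem recalled in the introduction.

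To see that $\gamma$ is a quasi-isomorphism I would first work in $\Derived(R)$. Theorem~\ref{thm: Dwyer-Greenlees colocalization and completion formulas} supplies two key facts: $\gamma$ is a $W$-equivalence whose target is $W$-complete, and it is already a morphism of $R^e$-modules. On the other hand, the polynomial-ring hypothesis on $\pi_*R$ together with the assumption $k\in\thick_{\Derived(R)}W$ put us in position to apply Corollary~\ref{cor: k in thick means R is complete}, which yields that $R$ itself is $W$-complete. Thus $\gamma$ is a $W$-equivalence between two $W$-complete $R$-modules.

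The remaining step is the short formal observation that a $W$-equivalence between $W$-complete objects is automatically a quasi-isomorphism: its fibre $F$ is $W$-null (since $\ext^*_R(W,\gamma)$ is an isomorphism) and also $W$-complete (since $W$-completeness is closed under extensions in any triangle of $W$-complete objects), so the defining property of $W$-completeness applied with $N=F$ gives $\ext^*_R(F,F)=0$, forcing $F\simeq 0$. Because $\gamma$ is already $R^e$-linear and quasi-isomorphism is detected on underlying spectra, this upgrades at once to an isomorphism in $\Derived(R^e)$. I do not expect any serious obstacle: the genuine content has already been concentrated in Corollary~\ref{cor: k in thick means R is complete} and Lemma~\ref{lem: isomorphisms in varphi}, and the only detail requiring mild care is to remember that the $R^e$-linearity of $\gamma$ is what allows the resulting isomorphism to be fed into Hochschild cohomology rather than merely into $\ext^*_R(R,-)$.
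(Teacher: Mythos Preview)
Your proposal is correct and follows essentially the same route as the paper's own proof, which simply invokes Corollary~\ref{cor: k in thick means R is complete} to conclude that $R$ is $W$-complete and then observes that the completion map defining $\varphi_W$ is therefore an isomorphism; you have merely spelled out in full the ``properties of $\varphi$'' step (the fibre argument and the upgrade to $\Derived(R^e)$) that the paper leaves implicit.
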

\begin{proof}
This is a simple application of Corollary~\ref{cor: k in thick means R is complete} and the properties of $\varphi$. Note that we are using here Malm's isomorphism $H_{*+m}(\fl M;k) \cong HH^*(R)$~\cite{Malm}.
\end{proof}

Thus, to understand when $\varphi$ is an isomorphism we need to study the partially ordered set of thick subcategories in $\Derived(R)^\compact$.

\section{Stratification of a derived category}
\label{sec: Stratification of a derived category}

In this section we recall the definitions and properties of stratification given in~\cite{BIKstratifying}. Throughout this section $\Derived$ denotes the derived category of a $k$-algebra $R$, where $k$ is a commutative $\sphere$-algebra. Hence $\Derived$ is a triangulated category which has arbitrary coproducts and a compact generator. This last property allows us to simplify some of the definitions of Benson, Iyengar and Krause from~\cite{BIKstratifying}, and only the simplified versions will be given.

\subsection*{Spectrum and support}
The \emph{center} of $\Derived$ is the graded-commutative ring of natural transformations $\alpha:1_\Derived \to \Sigma^n$ satisfying $\alpha \Sigma = (-1)^n \Sigma \alpha$. Following~\cite{BIKstratifying} we say that a Noetherian graded-commutative ring $S$ \emph{acts on $\Derived$} if there is a homomorphism of graded rings from $S$ to the graded-commutative center of $\Derived$. Note that the action of $S$ on $\Derived$ naturally makes $\hom_\Derived(X,Y)$ into an $S$-module, for any two objects $X$ and $Y$ in $\Derived$. In particular, $\pi^*X$ is an $S$-module for every $X\in \Derived$.

For example, the Hochschild cohomology $HH^*(R)$ acts on $\Derived$ in the following manner: given $f:\Sigma^n R \to R$ the natural transformation $f\otimes_R -$ is an element of the center of $\Derived$. Of course, the Hochschild cohomology need not be Noetherian. One solution to this problem is to simply choose a Noetherian subring of $HH^*(R)$.

Let $\spec S$ be the partially ordered set of homogeneous prime ideals of $S$. A subset $\vv \subset \spec S$ is \emph{specialization closed} if whenever $\pp \subset \qq$ and $\pp \in \vv$ then also $\qq \in \vv$. Given a specialization closed subset $\vv$, an object $X\in \Derived$ is called \emph{$\vv$-torsion} if $(\pi^*X)_\pp = 0$ for all $\pp \not\in \vv$, where $(\pi^*X)_\pp$ is the usual (homogeneous) localization at $\pp$. Let $\Derived_\vv$ be the full subcategory of $\vv$-torsion objects, this is a localizing subcategory \cite[Lemma 4.3]{BIKsupport}. By \cite[Proposition 4.5]{BIKsupport} there is a localization $L_\vv$ and a colocalization $\Gamma_\vv$ such that
\[ \Gamma_\vv X \to X \to L_\vv X \]
is an exact triangle and $\Ker L_\vv = \Im \Gamma_\vv = \Derived_\vv$. Note that both $\Gamma_\vv$ and $L_\vv$ are \emph{smashing}, i.e. they commute with coprducts~\cite[Corollary 6.5]{BIKsupport}.

\begin{remark}
We are using here the algebraic definition of a smashing localization, the definition used in algebraic topology is different. In topology a localization functor $L$ in a symmetric monoidal category is \emph{smashing} if it is equivalent to the functor $A \otimes -$ for some object $A$, where $\otimes$ is the monoidal product. Since we will not be working over a symmetric monoidal triangulated category in this paper, we shall only use the algebraic definition.
\end{remark}

Given a prime ideal $\pp \in \spec S$ let $\zz(\pp)=\{ \qq \in \spec S \ | \ \qq \nsubseteq \pp \}$. For a homogeneous ideal $\aa$ of $S$ we let $\vv(\aa)=\{\qq \in \spec S \ | \ \aa \subseteq \qq \}$. Both $\zz(\pp)$ and $\vv(\aa)$ are specialization closed subsets of $\spec S$. For $X$ in $\Derived$ denote by $X_\pp$ the localization $L_{\zz(\pp)}X$. By~\cite[Theorem 4.7]{BIKsupport} $\pi^*(X_\pp) \cong (\pi^* X)_\pp$ as $S$-modules, which justifies this notation. From this it is easy to see that $S_\pp$ naturally acts on the localizing subcategory $\Im L_{\zz(\pp)}$, in a way which extends the action of $S$.

\begin{definition}
\label{def: support according to BIK}
Let $\Gamma_\pp$ be the exact functor given by
\[ \Gamma_\pp X = \Gamma_{\vv(\pp)} X_\pp\]
It turns out that $\Im \Gamma_\pp$ is a localizing subcategory of $\Derived$ (see~\cite{BIKstratifying}). For an object $X$ in $ \Derived$ the \emph{support} of $X$ over $S$ is defined to be
\[ \supp_S X = \{ \pp \in \spec S \ | \ \Gamma_\pp X \neq 0 \} \]
\end{definition}

\subsection*{Stratification}
As above, we assume that the Noetherian ring $S$ acts on $\Derived$. To define stratification we need two more concepts from~\cite{BIKstratifying}. First, a localizing subcategory of $\Derived$ is \emph{minimal} if it is nonzero and contains no nonzero localizing subcategories. Second, a \emph{local-global principle holds} for the action of $S$ on $\Derived$ if for every $X\in \Derived$
\[ \loc_\Derived (X) = \loc_\Derived (\{ \Gamma_\pp X \ | \ \pp \in \spec S\} )\]
For example, by~\cite[Corollary 3.5]{BIKstratifying} if $S$ has a finite Krull dimension then the local-global principle holds.

\begin{definition}[\cite{BIKstratifying}]
\label{def: Stratification}
The triangulated category $\Derived$ is \emph{stratified} by $S$ if the following two conditions hold:
\begin{description}
\item[S1] The local-global principle holds for the action of $S$ on $\Derived$.
\item[S2] For every $\pp \in \spec S$ the localizing subcategory $\Im \Gamma_\pp$ is zero or minimal.
\end{description}
\end{definition}

Then main use of stratification is for classifying localizing and thick subcategories. To present this, we first extend the definition of support to subcategories. For a subcategory $\bl$ of $\Derived$ let $\supp_S \bl$ be the union $\bigcup_{X\in \bl} \supp_S X$. The next theorem is from~\cite{BIKstratifying} (it has been translated to our setting):

\begin{theorem}[{\cite[Theorem 6.1]{BIKstratifying}}]
\label{the: BIK theorem on thick subcats}
Suppose that $\Derived$ is stratified by the action of $S$ and that $\pi^*R$ is finitely generated as an $S$-module. Then the map
\[ \left\{
     \begin{array}{c}
        \text{Thick}\\
        \text{subcategories of }\Derived^\compact\\
     \end{array}
   \right\}
   \xrightarrow{\supp_S(-)}
   \left\{
     \begin{array}{c}
        \text{Specialization closed}\\
        \text{subsets  of }\supp_S \Derived\\
     \end{array}
   \right\}
   \]
is a bijection which respects inclusions. The inverse map sends a specialization closed subset $\vv \subset \supp_S \Derived$ to the full subcategory whose objects are $\{X\in \Derived^\compact \ | \ \supp_S X \subset \vv\}$.
\end{theorem}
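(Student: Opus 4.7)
The plan is to derive this classification as a consequence of the stratification hypothesis together with the finite-generation hypothesis on $\pi^*R$. Since the statement is quoted verbatim from~\cite[Theorem 6.1]{BIKstratifying}, the argument is essentially an unpacking of their machinery, and I would organize it in three steps.

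First, I would invoke the BIK classification of localizing subcategories: stratification (conditions S1 and S2) implies that $\bl \mapsto \supp_S \bl$ induces an inclusion-preserving bijection between the localizing subcategories of $\Derived$ and the arbitrary subsets of $\supp_S \Derived$, with inverse sending $\uu \subset \supp_S \Derived$ to the full subcategory $\{X \in \Derived \ | \ \supp_S X \subset \uu\}$. Condition S1 ensures that every localizing subcategory $\bl$ is determined by the set of primes $\pp$ for which $\Gamma_\pp X \neq 0$ for some $X \in \bl$, while S2 ensures that any prime at which $\bl$ meets $\Im \Gamma_\pp$ nontrivially contributes all of $\Im \Gamma_\pp$ to $\bl$.

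Second, using the hypothesis that $\pi^*R$ is finitely generated as an $S$-module, I would show that for every compact object $X \in \Derived^\compact$ the subset $\supp_S X$ is specialization closed in $\spec S$; in fact it is Zariski closed, being the support of the finitely generated $S$-module $\pi^*X$. Conversely, every specialization closed subset $\vv \subset \supp_S \Derived$ arises as $\supp_S \bl$ for some thick subcategory $\bl$ of compact objects: closed subsets of the form $\vv(\aa)$ with $\aa$ finitely generated are realized by Koszul objects on the generators of $\aa$, built from the compact generator $R$, and a general specialization closed $\vv$ is a union of such, corresponding to the thick subcategory generated by the associated collection of Koszul compacts.

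Finally, I would combine these two observations with Neeman's theorem, already recalled in Section~\ref{sec: Localizing subcategories and localization}: for a set $\class$ of compact objects one has $\thick_\Derived(\class) = \loc_\Derived(\class) \cap \Derived^\compact$. This identifies thick subcategories of $\Derived^\compact$ with localizing subcategories of $\Derived$ generated by compact objects, which under the bijection of the first step correspond precisely to the specialization closed subsets of $\supp_S \Derived$. The main technical obstacle is the surjectivity: producing, for each $\pp \in \supp_S \Derived$, a compact witness whose support is the Zariski closure of $\pp$, and this is where both the Koszul construction and the hypothesis that $\pi^*R$ is finitely generated over $S$ are essential.
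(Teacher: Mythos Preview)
The paper does not give a proof of this statement: it is quoted as \cite[Theorem 6.1]{BIKstratifying} and used as a black box, so there is no ``paper's own proof'' to compare your proposal against. What you have written is a correct and standard outline of the Benson--Iyengar--Krause argument: classification of localizing subcategories from stratification, closedness of the support of compacts from the finite-generation hypothesis (together with Koszul objects to realize closed subsets), and Neeman's identification $\thick_\Derived(\class) = \loc_\Derived(\class) \cap \Derived^\compact$ to pass from localizing to thick. If anything, your sketch is more detailed than what the paper provides, since the paper simply invokes the result.
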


The definition given here of support is a bit too cumbersome to be useful. Fortunately results of Benson, Iyengar and Krause from~\cite{BIKsupport} show that we can sometimes use a more familiar notion of support.
\begin{lemma}
\label{lem: familiar support}
Suppose that $\pi^*R$ is finitely generated as an $S$-module. Let $X$ be any compact object in $\Derived$, then $\supp_S X = \supp_S \pi^*X$, where $\supp_S \pi^*X$ is the usual support of a graded module over a graded-commutative ring. Moreover, in this case
\[\supp_S \thick_\Derived X = \supp_S \pi^*X\]
\end{lemma}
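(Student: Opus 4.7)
The plan is to derive both equalities from standard facts in \cite{BIKsupport} about the behaviour of BIK support on compact objects. I first observe that since $X$ is compact and $R$ is a compact generator, $\pi^*X = \ext^*_R(R,X)$ is finitely generated as a $\pi^*R$-module; combined with the hypothesis that $\pi^*R$ is finitely generated over $S$, this makes $\pi^*X$ a finitely generated $S$-module. This is the only finiteness I will use.

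For the first equality $\supp_S X = \supp_S \pi^*X$, I unravel the definitions. By Definition~\ref{def: support according to BIK} we have $\pp \in \supp_S X$ iff $\Gamma_{\vv(\pp)} X_\pp \neq 0$. The easy inclusion $\supp_S X \subseteq \supp_S \pi^*X$ follows from the natural identification $\pi^*(X_\pp) \cong (\pi^*X)_\pp$ (cited in the text before Definition~\ref{def: support according to BIK}) together with the fact that $R$ is a compact generator of $\Derived$: if $(\pi^*X)_\pp = 0$ then $X_\pp = 0$, hence $\Gamma_\pp X = 0$. For the reverse inclusion I would invoke the BIK result (the key content of \cite[Section~5]{BIKsupport}) which shows that when $\pi^*X$ is finitely generated over a Noetherian ring $S$ acting on $\Derived$, the two notions of support agree on compact objects; the argument there produces, from a nonzero element of $(\pi^*X)_\pp$ killed by a power of $\pp$, a nonzero ``Koszul object'' mapping into $\Gamma_{\vv(\pp)}X_\pp$.

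For the second equality $\supp_S \thick_\Derived X = \supp_S \pi^*X$, I use that the function $\supp_S(-)$ on objects of $\Derived$ is, by construction of $\Gamma_\pp$ as an exact coproduct-preserving functor, closed under suspensions, exact triangles, retracts, and coproducts (these properties are recorded in \cite{BIKsupport}). It follows by induction on the thick subcategory construction that $\supp_S Y \subseteq \supp_S X$ for every $Y \in \thick_\Derived X$, so
\[ \supp_S \thick_\Derived X = \bigcup_{Y \in \thick_\Derived X} \supp_S Y = \supp_S X,\]
and combining with the first equality yields the claim.

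The main obstacle is the nontrivial inclusion $\supp_S \pi^*X \subseteq \supp_S X$; the rest is formal manipulation. This inclusion fails in general without the finite generation hypothesis, but under our hypothesis it is exactly the content of the identification of BIK support with classical support for compact objects, which I plan to cite from \cite{BIKsupport} rather than reprove.
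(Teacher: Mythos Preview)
Your proposal is correct and follows essentially the same approach as the paper: both establish finite generation of $\pi^*X$ over $S$, invoke the results of \cite[\S5]{BIKsupport} (the paper cites Theorem~5.4 directly, equating $\supp_S X$ with the small support of $\pi^*X$, which coincides with the usual support for finitely generated modules) for the first equality, and use the closure of $\supp_S$ under the thick subcategory operations for the second. Your only deviation is splitting the first equality into two inclusions and arguing one of them by hand, which is harmless extra detail.
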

\begin{proof}
Since $\pi^*R$ is a finitely generated $S$-module, and $X$ is finitely built by $R$, we conclude that $\pi^*X$ is also a finitely generated $S$-module. By~\cite[Theorem 5.4]{BIKsupport} this implies that $\supp_S X$ is equal to the \emph{small} support of $\pi^*X$ as an $S$-module (see~\cite{BIKsupport} for the definition of the small, or cohomological, support). Since $H^*X$ is a finitely generated module, its small support is the same as the usual support, i.e the set $\{\pp \in \spec S \ | \ (\pi^*X)_\pp \neq 0 \}$.

Let $\Tri$ be the full subcategory of $\Derived^\compact$ consisting of objects $Y$ such that $\supp_S Y \subset \supp_S X$. It is easy to see that $\Tri$ is a thick subcategory which contains $X$. Hence $\thick_\Derived X \subset \Tri$ and the result follows.
\end{proof}

\subsection*{Koszul objects}
Given an object $M$ of $\Derived$ and an element $s \in S$ let $M/s$ be the object defined by the following exact triangle
\[ \Sigma^{-|s|} M \xrightarrow{s_M} M \to M/s \]
Obviously, the object $M/s$ is defined only up to isomorphism. For a sequence $s=(s_1,...,s_n) \subset S$ let $M/s$ be the object $M/s_1/s_2\cdots/s_n$. Note that there is an obvious morphism $M \to M/s$, which we will use often.

Let $I$ be an ideal of $S$. We denote by $M/I$ any object of the form $M/(s_1,...,s_n)$ where $s_1,...,s_n$ are generators of $I$. Certain properties of $M/I$ do not depend on the particular choice of generators (see e.g.~\cite[Lemma 2.6]{BIKstratifying}), which justifies the notation.

\section{Hochschild cohomology and localization}
\label{sec: Hochschild cohomology and localization}


This section has two goals. First, we need to show that certain localizations of $R$ are bimodules. Next, we show that the endomorphisms (as bimodules) of these localizations are the localization of the Hochschild cohomology of $R$. This will come in handy in the next section, when we use this fact to deduce that certain Koszul constructions result in bimodules.

Throughout this section $R$ is a $k$-algebra where $k$ is a commutative $\sphere$-algebra and $\Hring$ is a Noetherian subring of the Hochschild cohomology ring $\ext^*_{R^e}(R,R)$. There is a \emph{left} action of $\Hring$ on $\Derived(R^e)$ which comes from utilizing the left action of $R$ on $R$-bimodules. Explicitly, given an element $h \in \Hring^n$ and an $R$-bimodule $B$ the action of $h$ on $B$ is the map $h \otimes_R 1_B$. One should bear in mind that the right action will usually be very different from the left one. Here we will only consider the left action.

Let $\uu$ be a specialization closed subset of $\spec \Hring$. Recall that $\Derived(R^e)_\uu$ is the localizing subcategory of $\uu$-torsion bimodules. This gives rise to the localization functor which we denote by $L^e_\uu$ - the unique localization functor whose kernel is $\Derived(R^e)_\uu$. On the other hand we can consider the localizing subcategory $\Derived(R)_\uu$ and its associated localization $L_\uu: \Derived(R) \to \Derived(R)$.

\begin{theorem}
\label{thm: Bimodule localization}
Let $\uu$ be a specialization closed subset of $\spec \Hring$. Then the localization functor $L_\uu$ is naturally isomorphic to the augmented functor $L_\uu^e R \otimes_R -$.
\end{theorem}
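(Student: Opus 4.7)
The plan is to verify that, for every $R$-module $X$, the triangle
\[\Gamma^e_\uu R \otimes_R X \to X \to L^e_\uu R \otimes_R X\]
obtained by applying $-\otimes_R X$ to the bimodule localization triangle $\Gamma^e_\uu R \to R \to L^e_\uu R$ is the $\uu$-localization triangle of $X$ in $\Derived(R)$. By uniqueness of localization triangles, this immediately yields the claimed natural isomorphism of augmented functors.

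The crucial preliminary observation is that the $\Hring$-action on any bimodule $B$, defined as $h \otimes_R 1_B$ via the left $R$-factor, agrees with its $\Hring$-action as a left $R$-module. Hence a bimodule is $\uu$-torsion if and only if it is $\uu$-torsion as a left $R$-module. Now fix a $\uu$-torsion bimodule $B$ and consider the class $\{X \in \Derived(R) : B\otimes_R X \in \Derived(R)_\uu\}$; this is a localizing subcategory of $\Derived(R)$ (since $-\otimes_R X$ is exact and preserves coproducts, and $\Derived(R)_\uu$ is localizing), and it contains $R$ because $B\otimes_R R \simeq B$ is $\uu$-torsion. Hence it equals $\Derived(R)$. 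Applied to $B = \Gamma^e_\uu R$, this shows $\Gamma^e_\uu R \otimes_R X$ is $\uu$-torsion for every $X$.

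The harder direction is to show $L^e_\uu R \otimes_R X$ is $\uu$-local in $\Derived(R)$. I would first handle the case $X=R$: for any $\uu$-torsion $R$-module $T'$, the extension-restriction adjunction along $R \hookrightarrow R^e$ gives
\[\hom_R(T', L^e_\uu R) \cong \hom_{R^e}(T' \otimes_k R^{\op}, L^e_\uu R),\]
so it suffices to show that the induced bimodule $T' \otimes_k R^{\op}$ is $\uu$-torsion. For a Koszul $R$-module $T'=R/I$ with $\vv(I) \subseteq \uu$, applying $-\otimes_k R^{\op}$ to the defining triangles realizes $R/I \otimes_k R^{\op}$ as the bimodule Koszul object $R^e/I$, which is $\uu$-torsion. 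A localizing-subcategory argument, using the standard Benson-Iyengar-Krause fact that $\Derived(R)_\uu$ is generated (as a localizing subcategory) by such Koszul objects, extends this to every $T' \in \Derived(R)_\uu$. So $L^e_\uu R$ is $\uu$-local as a left $R$-module.

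For general $X$, the class $\{X \in \Derived(R) : L^e_\uu R \otimes_R X \in \Im L_\uu\}$ is again a localizing subcategory (using that $L_\uu$ is smashing, so $\Im L_\uu$ is closed under coproducts, triangles, and retracts) and contains $R$ by the previous paragraph; hence it equals $\Derived(R)$. The main technical obstacle is the case $X=R$, where one must bridge bimodule and left-module torsion, accomplished via the extension-restriction adjunction together with the compatibility of Koszul constructions in $\Derived(R)$ and $\Derived(R^e)$.
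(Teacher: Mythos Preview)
Your proof is correct and follows essentially the same strategy as the paper: show that the triangle $\Gamma^e_\uu R \otimes_R X \to X \to L^e_\uu R \otimes_R X$ exhibits the first term as $\uu$-torsion and the third as $\uu$-local, hence must be the $\uu$-localization triangle of $X$. Both arguments use the extension--restriction adjunction along $\lambda: R \to R^e$ and the identification of the induced bimodule $R/\pp \otimes_k R^{\op}$ with the bimodule Koszul object (which the paper writes as $R/\pp \otimes_R R^e$), together with the fact that $\Derived(R)_\uu$ is generated by Koszul objects.

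There is one small organizational difference worth noting. For the $\uu$-locality of $L^e_\uu R \otimes_R X$, the paper treats general $X$ in one stroke by using compactness of $R/\pp$ to write $\ext_R^*(R/\pp, L^e_\uu R \otimes_R X) \cong \ext_R^*(R/\pp, L^e_\uu R) \otimes_R X$, reducing immediately to the bimodule adjunction. You instead establish the case $X=R$ first and then run a localizing-subcategory argument (using that $\Im L_\uu$ is localizing since $L_\uu$ is smashing). Both work; your version is slightly more modular and avoids the explicit compactness manipulation, while the paper's is marginally shorter. For the torsion half, your argument via the agreement of the bimodule and left-module $\Hring$-actions is a clean shortcut; the paper instead tracks explicitly that $\Gamma^e_\uu R$ is built from $R/\pp$'s as a left module.
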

\begin{proof}
By~\cite[Theorem 6.4]{BIKsupport} the localizing subcategory $\Derived(R)_\uu$ is generated by the set of compact objects $R/ \pp$ for all $\pp \in \uu$. Similarly, $\Derived(R^e)_\uu$ is generated by the set of compact objects
\[ \{ R/ \pp \otimes_R R^e \ | \ \pp \in \uu\} \]
Note that here we are implicitly using the fact that $R/\pp$ is naturally a bimodule.

Let $u:k \to R$ be the unit map of $R$ and let $\lambda: R\cong R\otimes k \to R^e$ be $1_R \otimes u$. Given an $R$-module $M$ and a prime ideal $\pp \in \uu$, we have the following isomorphisms:
\begin{align*}
\ext_R^*(R/\pp, L_\uu^e R  \otimes_R M) &\cong \ext_R^*(R/\pp,R)\otimes_R L_\uu^e R  \otimes_R M \\
   &\cong \ext_R^*(R/\pp, L_\uu^e R ) \otimes_R M \\
   &\cong \ext_{R^e}^*(R/\pp \otimes_R R^e, L_\uu^e R ) \otimes_R M = 0
\end{align*}
The first and second isomorphisms use the compactness of $R/\pp$, while the third uses the usual adjunction arising from $\lambda$. The upshot of the calculation above is that for any object $X$ in $\Derived(R)_\uu$ we have $\ext^*_R(X,L_\uu^e R  \otimes_R M) = 0$. This implies that $\Gamma_\uu (L_\uu^e R  \otimes_R M) = 0$ and therefore $L_\uu^e R  \otimes_R M$ is in the image of $L_\uu$.

The next step is to show that $\Gamma_\uu^e(R) \otimes_R M \in \Derived(R)_\uu$. Consider $\Gamma^e_\uu$ - the colocalization functor whose essential image is $\Derived(R^e)_\uu$. The bimodule $\Gamma_\uu^e(R)$ is built by the set of bimodules $ \{ R/ \pp \otimes_R R^e \ | \ \pp \in \uu\}$. Hence, as a left module (via $\lambda$) $\Gamma_\uu^e(R)$ is also built by the modules $ \{ R/ \pp \otimes_R R^e \ | \ \pp \in \uu\}$.

As left $R$-modules, $R$ builds $R^e$. By tensoring this recipe on the left with $R/\pp$ we see that $R/ \pp \otimes_R R^e$ is built by $R/ \pp$ in $\Derived(R)$. It follows that $\Gamma_\uu^e(R) \in \Derived(R)_\uu$. Now consider the functor $\bar{\Gamma} = \Gamma_\uu^e(R) \otimes_R- :\Derived(R) \to \Derived(R)$. Since $\bar{\Gamma}$ preserves coproducts, it is easy to see that $\bar{\Gamma}^{-1}(\Derived(R)_\uu)$ is a localizing subcategory which contains $R$, hence $\bar{\Gamma}^{-1}(\Derived(R)_\uu) = \Derived(R)$. We conclude that $\Gamma_\uu^e(R) \otimes_R M \in \Derived(R)_\uu$ for any $M$.

By applying the functor $L_\uu$ to the exact triangle $\Gamma_\uu^e(R) \otimes_R M \to M \to L_\uu^e R  \otimes_R M$ we see that the functor $L_\uu^e R  \otimes_R -$ is isomorphic to $L_\uu$.
\end{proof}

Before continuing we note the following property. The morphism $R \to L_\uu^e R$ induces a map:
\[ l: \ext^*_{R^e}(R,R) \to \ext^*_{R^e}(R,L_\uu^e R) \cong \ext^*_{R^e}(L_\uu^e R,L_\uu^e R)\]
It is easy to see that this is a map of rings (where multiplication is given by composition).

\begin{corollary}
\label{cor: Hochschild cohomology of localization}
Suppose that $R$ is a compact $R^e$-module. Let $\pp \subset \Hring$ be a prime ideal and let $\uu = \zz(\pp)$. Then there is as isomorphism of rings over $\ext^*_{R^e}(R,R)$:
\[ \ext^*_{R^e} (L_\uu^e R ,L_\uu^e R ) \cong \ext^*_{R^e}(R,R)_\pp\]
In addition, if the map $\ext^*_{R^e}(R,R) \to \pi^*(R)$ is surjective then the natural map $\ext^*_{R^e} (L_\uu^e R ,L_\uu^e R ) \to \pi^*L_\uu^e R $ is also surjective.
\end{corollary}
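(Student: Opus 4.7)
The plan is to factor the asserted isomorphism as
\[ \ext^*_{R^e}(L_\uu^e R, L_\uu^e R) \cong \ext^*_{R^e}(R, L_\uu^e R) \cong \ext^*_{R^e}(R,R)_\pp, \]
establish the two steps separately, and then upgrade the composition to a ring isomorphism over $\ext^*_{R^e}(R,R)$.

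For the first step, the defining property of a localization functor recalled in Section~\ref{sec: Localizing subcategories and localization} gives the natural isomorphism induced by the unit map $R \to L_\uu^e R$. For the second step, I would use that $R$ is compact in $\Derived(R^e)$ together with the smashing property of $L_\uu^e$: a standard BIK-type argument says that for a compact object $C$, the smashing localization at $\zz(\pp)$ is converted by $\ext^*_{R^e}(C,-)$ into the homogeneous localization at $\pp$ of the resulting $\Hring$-module. I would make this concrete by noting that each $h \in \Hring \setminus \pp$ acts invertibly on $L_\uu^e R$: the Koszul object $R/h$ has support contained in $\vv(h) \subseteq \zz(\pp) = \uu$, so $L_\uu^e(R/h)=0$, and applying $L_\uu^e$ to the triangle $\Sigma^{-|h|} R \xrightarrow{h} R \to R/h$ shows that $h$ is an isomorphism on $L_\uu^e R$.

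The same invertibility observation also supplies the ring-theoretic argument. The map $l \colon \ext^*_{R^e}(R,R) \to \ext^*_{R^e}(L_\uu^e R, L_\uu^e R)$ recalled before the corollary is a ring homomorphism that sends every element of $\Hring \setminus \pp$ to a unit, so by the universal property of localization it factors uniquely through a ring map $\ext^*_{R^e}(R,R)_\pp \to \ext^*_{R^e}(L_\uu^e R, L_\uu^e R)$. This ring map must agree with the $\Hring$-module isomorphism obtained in the previous paragraph, giving the desired ring isomorphism over $\ext^*_{R^e}(R,R)$.

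For the final claim, Theorem~\ref{thm: Bimodule localization} identifies $L_\uu^e R$ with $L_\uu R$ as a left $R$-module, and the same smashing-localization argument applied now inside $\Derived(R)$ gives $\pi^* L_\uu^e R \cong (\pi^* R)_\pp$. Under the two identifications, the natural map $\ext^*_{R^e}(L_\uu^e R, L_\uu^e R) \to \pi^* L_\uu^e R$ is obtained from $\ext^*_{R^e}(R,R) \to \pi^* R$ by homogeneous localization at $\pp$, and exactness of localization preserves surjectivity. The main obstacle I anticipate is the bookkeeping needed to confirm that the two parallel applications of the ``smashing localization equals algebraic localization'' principle, one in $\Derived(R^e)$ and one in $\Derived(R)$, fit into the commutative square required to identify the natural map with its localized version; this amounts to chasing the unit $\eta^e_R$ through the chain of adjunctions used in Step~1.
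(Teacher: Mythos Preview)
Your proposal is correct and follows essentially the same route as the paper: factor through $\ext^*_{R^e}(R,L_\uu^e R)$ using the localization adjunction on one side and the ``$\ext$ from a compact object turns $L_{\zz(\pp)}$ into algebraic localization at $\pp$'' principle (which is \cite[Proposition~2.3]{BIKstratifying}) on the other, then run the parallel argument in $\Derived(R)$ via Theorem~\ref{thm: Bimodule localization} and conclude surjectivity by exactness of localization. The commutative square you flag as the main bookkeeping obstacle is exactly the diagram the paper writes down, with commutativity supplied by naturality in \cite[Proposition~2.3]{BIKstratifying}; your added remarks on the ring structure (factoring $l$ through the universal property of localization) make explicit something the paper leaves implicit.
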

\begin{proof}
By~\cite[Proposition 2.3]{BIKstratifying} the map $R \to L_\uu^e R$ induces an isomorphism
\[ \ext^*_{R^e}(R,R)_\pp \cong \ext^*_{R^e} (R ,L_\uu^e R ) \]
By the properties of localization the map $R \to L_\uu^e R$ induces an isomorphism
\[ \ext^*_{R^e} (L_\uu^e R ,L_\uu^e R ) \cong \ext^*_{R^e} (R ,L_\uu^e R ) \]
From naturality of the relevant maps (see~\cite[Proposition 2.3]{BIKstratifying}) we get a commutative diagram
\[ \xymatrixcompile{
{\ext^*_{R^e}(R,R)_\pp} \ar[d] \ar[r]^\cong & \ext^*_{R^e} (R ,L_\uu^e R ) \ar[d]  & {\ext^*_{R^e} (L_\uu^e R ,L_\uu^e R )} \ar[l]^\cong \ar[d] \\
{\pi^*(R)_\pp} \ar[r]^\cong & {\pi^*(L_\uu^e R)} & {\ext^*_R(L_\uu^e R ,L_\uu^e R )} \ar[l]^\cong
}\]
To explain the isomorphisms at the bottom of the diagram, note that Theorem~\ref{thm: Bimodule localization} shows that $L_\uu R \cong L_\uu^e R$ as $R$-modules (with the $R$-module structure on $L_\uu^e R$ is induced via $\lambda$). Now the bottom isomorphisms follow from the same reasons as the top ones. Since the left-most vertical map is a surjection (localization at a prime is exact), then so is the right-most vertical map.
\end{proof}

\section{Left ring objects}
\label{sec: Left ring objects}

Left ring objects are bimodules that share a small amount of the structure of rings, though one must bear in mind that these are not rings in any standard sense. Still, their structure is enough to make them useful for the purpose of understanding the localizing subcategories they generate. We remark that this section is, in essence, a non-commutative generalization of~\cite[Section 4]{ShamirStratify}.

Throughout this section, as before, $R$ is a $k$-algebra, $k$ is a commutative $\sphere$-algebra and $\Hring$ is a Noetherian subring of $\ext^*_{R^e}(R,R)$.

\subsection*{Left ring objects}
A \emph{left ring object} of $R$ is an $R$-bimodule $A$  endowed with a unit morphism $u:R \to A$ of $R$-bimodules and a product morphism $m:A \otimes_R A \to A$ of \emph{left} $R$-modules, such that $m(u\otimes 1_A)$ is the natural isomorphism $\ell_A:R\otimes_R A \to A$. A morphism of left ring objects is a morphism in $\Derived(R^e)$ which respects the relevant structures. We emphasize that there is no assumption of commutativity nor associativity of $A$. A left \emph{$A$-module} is an object $M\in \Derived(R)$ endowed with a morphism $a:A\otimes_R M \to M$ satisfying $a(u \otimes 1_M) = \ell_M$. Again, we do not assume associativity of this structure. Note that the definition of a left ring object is more general than the definition of a ring object given in~\cite[Definition V.2.1]{EKMM}.

The next lemma is the source of usefulness of left ring objects.

\begin{lemma}
\label{lem: Ring object maps surject module homotopy}
Let $A$ be a left ring object of $R$ and let $M$ be an $A$-module. Then the map
\[ u^*:\ext_R^*(A,M) \to \pi^*M\]
induced by the unit $u:R\to A$ is a surjection.
\end{lemma}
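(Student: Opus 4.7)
The plan is to exhibit an explicit set-theoretic section of $u^*$, i.e.\ to lift any given class in $\pi^* M$ to a class in $\ext_R^*(A,M)$ that restricts to it along $u$. The construction uses only the $A$-module action on $M$ and the unit axiom.

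Concretely, suppose $f : \Sigma^n R \to M$ represents a class in $\pi^n M = \ext_R^{-n}(R, M)$ in $\Derived(R)$. Define the lift
\[
\tilde{f} \;:\; \Sigma^n A \;\xrightarrow{\Sigma^n \ell_A^{-1}}\; A \otimes_R \Sigma^n R \;\xrightarrow{1_A \otimes f}\; A \otimes_R M \;\xrightarrow{a}\; M,
\]
where $\ell_A^{-1} : A \xrightarrow{\cong} A \otimes_R R$ is the inverse of the natural isomorphism and $a : A \otimes_R M \to M$ is the $A$-action on $M$.

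The key computation is to show that $u^*(\tilde{f}) = \tilde{f} \circ \Sigma^n u$ equals $f$. Ignoring the harmless suspension, this is the chase
\[
\tilde{f} \circ u \;=\; a \circ (1_A \otimes f) \circ \ell_A^{-1} \circ u \;=\; a \circ (1_A \otimes f) \circ (u \otimes 1_R) \circ \ell_R^{-1} \;=\; a \circ (u \otimes 1_M) \circ (1_R \otimes f) \circ \ell_R^{-1},
\]
using the naturality of $\ell^{-1}$ for the second equality and the functoriality of $\otimes_R$ for the third. Now the unit axiom for the $A$-module $M$ gives $a \circ (u \otimes 1_M) = \ell_M$, and one more application of the naturality of $\ell$ collapses the composition to $f$.

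Since this works for every representative $f$, the map $u^*$ is surjective. There is no real obstacle here: associativity is never invoked (which is fortunate since it is not assumed), and the argument uses only the unit axioms for $A$ as a left ring object and for $M$ as an $A$-module. The care needed amounts to keeping track of the shift by $\Sigma^n$, which is formal.
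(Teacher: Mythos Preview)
Your argument is correct and is the standard one: given $f\colon \Sigma^n R \to M$, extend it along the $A$-action to $\tilde f = a\circ(1_A\otimes f)\circ(\text{unit iso})^{-1}$, and check via the unit axiom that $\tilde f\circ u = f$. This is precisely the approach the paper has in mind when it defers to \cite[Lemma~4.1]{ShamirStratify}; the only difference is cosmetic. One small notational clash worth fixing: in the paper $\ell_A$ denotes the \emph{left} unit isomorphism $R\otimes_R A \to A$ (see the definition of a left ring object), whereas you use the same symbol for the \emph{right} unit isomorphism $A\otimes_R R \to A$. The computation goes through because the two agree on $R$ itself, but you should rename your map (say $r_A$) to avoid confusion with the paper's conventions.
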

\begin{proof}
The proof is essentially the same as in~\cite[Lemma 4.1]{ShamirStratify}.
\end{proof}

\subsection*{Left ring objects arising from localizations}
For the rest of this section $\uu$ denotes a specialization closed subset of $\spec \Hring$, $A$ is the bimodule $L_\uu^e R$ and $u$ is the natural map of bimodules $R \to A$. We will use the results of Section~\ref{sec: Hochschild cohomology and localization} to show that $A$ is a left ring object.

\begin{lemma}
In the situation above there exists a morphism $m:A \otimes_R A \to A$ of bimodules which makes $A$ into a left ring object.
\end{lemma}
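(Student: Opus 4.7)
The plan is to show that the natural bimodule map $u\otimes_R 1_A \colon R\otimes_R A \to A\otimes_R A$ is an isomorphism in $\Derived(R^e)$; once this is established, $m := \ell_A\circ (u\otimes 1_A)^{-1}$ is a bimodule morphism, and the identity $m\circ(u\otimes 1_A)=\ell_A$ holds by construction, so $(A,u,m)$ is a left ring object with the product in fact a bimodule map (stronger than the left $R$-module condition required in the definition).

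First I would invoke Theorem~\ref{thm: Bimodule localization}, which identifies $L_\uu$ with $L_\uu^e R\otimes_R -$ as augmented endofunctors of $\Derived(R)$. Concretely, for any left $R$-module $M$ the triangle $\Gamma_\uu^e R\otimes_R M \to M \to A\otimes_R M$, obtained from the bimodule triangle $\Gamma_\uu^e R\to R\to A$ by tensoring on the right with $M$, coincides with the $L_\uu$-localization triangle of $M$, and its second map is identified with the localization unit $\eta_M\colon M\to L_\uu M$. Specializing to $M=R$ gives $L_\uu R\cong A\otimes_R R\cong A$, which shows that $A$, viewed as a left $R$-module via $\lambda\colon R\to R^e$, is $L_\uu$-local. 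Specializing to $M=A$ then shows that $u\otimes 1_A$ (up to the canonical isomorphism $\ell_A$) is precisely $\eta_A\colon A\to L_\uu A$, and because $A$ is already $L_\uu$-local this unit is an isomorphism in $\Derived(R)$.

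To finish I would promote this to an isomorphism of bimodules. The map $u\otimes_R 1_A$ is a priori a morphism in $\Derived(R^e)$, and the forgetful functor $\Derived(R^e)\to\Derived(R)$ is conservative because isomorphisms in both derived categories are detected by the underlying $\pi_*$, which ignores the right $R$-action. Hence $u\otimes_R 1_A$ is invertible in $\Derived(R^e)$, its inverse exists as a bimodule map, and $m:=\ell_A\circ (u\otimes 1_A)^{-1}$ is the desired product. I do not anticipate any serious obstacle: the entire argument reduces, via Theorem~\ref{thm: Bimodule localization}, to the general fact that the unit of a localization is invertible on already-local objects, together with the elementary conservativity remark that lets us pass from an isomorphism of left $R$-modules to an isomorphism of $R$-bimodules; the latter is the only step that genuinely warrants being spelled out, since without it one might worry that extra structure is needed to invert $u\otimes 1_A$ in the bimodule category.
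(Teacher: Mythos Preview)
Your proof is correct and follows essentially the same approach as the paper: the paper also invokes Theorem~\ref{thm: Bimodule localization} to conclude that $u\otimes_R 1_A$ is an isomorphism of left $R$-modules, then observes that a bimodule map which is an isomorphism of left modules is automatically an isomorphism of bimodules, and takes $m$ to be its inverse. Your version simply makes the two implicit steps explicit---identifying $u\otimes 1_A$ with the localization unit on the already-local object $A$, and spelling out the conservativity of the forgetful functor $\Derived(R^e)\to\Derived(R)$.
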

\begin{proof}
From Theorem~\ref{thm: Bimodule localization} it follows that $u \otimes_R A: A \to A\otimes_R A$ is an isomorphism of left modules. Since $u \otimes_R A$ is a morphism of bimodules, it is also an isomorphism of bimodules. Taking $m:A \otimes_R A \to A$ to be the inverse of $u \otimes_R A$ completes the proof.
\end{proof}

Theorem~\ref{thm: Bimodule localization} also implies the following strengthening of Lemma~\ref{lem: Ring object maps surject module homotopy}.
\begin{lemma}
\label{lem: Localization ring objects bijects homotopy}
Let $M$ be an $A$-module, then $M \cong A \otimes_R M$ and the map
\[ u^*:\ext_R^*(A,M) \to \pi^*M\]
is an isomorphism.
\end{lemma}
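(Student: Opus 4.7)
The plan is to prove the two assertions in sequence, with the defining relation $a \circ (u \otimes 1_M) = \ell_M$ of the $A$-module structure as the central input.

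For the first claim, I would apply Theorem~\ref{thm: Bimodule localization} to $M$ viewed as a left $R$-module: it identifies $L_\uu M$ with $A \otimes_R M$, and, by naturality, identifies the localization unit $\eta_M: M \to L_\uu M$ (modulo the natural isomorphism $\ell_M: R \otimes_R M \xrightarrow{\cong} M$) with the map $u \otimes_R 1_M: R \otimes_R M \to A \otimes_R M$. The module structure map $a: A \otimes_R M \to M$ thus supplies a retraction of $\eta_M$, since $a \circ \eta_M = a \circ (u \otimes_R 1_M) = \ell_M$ is invertible. The full subcategory of $L_\uu$-local objects is characterized by vanishing of $\hom_{\Derived(R)}(Y, -)$ for $Y \in \Derived(R)_\uu$, a property plainly preserved under retracts; since $L_\uu M = A \otimes_R M$ is local, so is $M$. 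Consequently $\eta_M$ is an isomorphism, giving $M \cong A \otimes_R M$.

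For the second claim, I would specialize Theorem~\ref{thm: Bimodule localization} to the case $M = R$: as a left $R$-module, $A = L_\uu^e R \cong L_\uu R$, and under this identification the bimodule unit $u: R \to A$ becomes the localization unit $\eta_R$. The universal property of localization (reviewed in Section~\ref{sec: Localizing subcategories and localization}) then guarantees that $\eta_R$ induces a natural isomorphism $\hom_{\Derived(R)}(L_\uu R, N) \cong \hom_{\Derived(R)}(R, N)$ for every $L_\uu$-local object $N$. Taking $N = \Sigma^n M$, which is local for each $n$ by the first part, and assembling over all $n$ yields the claimed isomorphism
\[ u^*:\ext_R^*(A, M) \xrightarrow{\cong} \pi^* M. \]

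The only delicate point is the naturality check identifying $u \otimes_R 1_M$ with $\eta_M$ under the equivalence $L_\uu \cong L_\uu^e R \otimes_R -$ of Theorem~\ref{thm: Bimodule localization}; once this compatibility is in place, both statements follow immediately. Note that this result is exactly the promised strengthening of Lemma~\ref{lem: Ring object maps surject module homotopy}: in the localization setting the surjection $u^*$ is upgraded to an isomorphism, reflecting the fact that being an $A$-module here already forces the module to be $L_\uu$-local.
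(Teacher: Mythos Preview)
Your proposal is correct and follows essentially the same approach as the paper: both use the $A$-module axiom to exhibit $M$ as a retract of $A\otimes_R M \cong L_\uu M$, deduce that $M$ is $L_\uu$-local, and then invoke the universal property of localization to get the isomorphism $u^*$. The only cosmetic difference is that the paper makes the argument for locality explicit by splitting the triangle $\Gamma_\uu M \to M \to L_\uu M$ and observing that the resulting summand $\Sigma\Gamma_\uu M$ of $L_\uu M$ must vanish because $\ext^*_R(\Gamma_\uu M, L_\uu M)=0$, whereas you phrase the same orthogonality as ``local objects are closed under retracts''; these are the same argument in different clothing.
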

\begin{proof}
By the definition of an $A$-module, $M$ is a retract of $A \otimes_R M$. In other words, $M$ is a retract of $L_\uu M$. Therefore, $L_\uu M$ is isomorphic to the direct sum $M \oplus \Sigma \Gamma_\uu M$ and in particular $\Sigma \Gamma_\uu M$ is a retract of $L_\uu M$. Since
\[ \ext^*_R(\Gamma_\uu M, L_\uu M)= \ext^*_R(L_\uu \Gamma_\uu M, L_\uu M) = 0,\]
we conclude that $\Gamma_\uu M = 0$. It follows that the localization map $M \to L_\uu M$ is an isomorphism. Hence the map $R \to L_\uu R$ induces an isomorphism
\[ \ext_R^*(L_\uu R,M) \to \ext_R^*(R,M). \]
\end{proof}

It is easy to see that the category of $A$-modules is precisely the localizing subcategory $\Im L_\uu$ of $\Derived(R)$. It follows that there is a natural action of $\ext^*_{R^e}(A,A)$ on $\Im L_\uu$: given $f\in \ext^*_{R^e}(A,A)$ and an $A$-module $M$ we use the morphism
\[ \Sigma^{|f|} M \cong \Sigma^{|f|} A \otimes_R M \xrightarrow{f \otimes 1} A \otimes_R M  \cong M\]
As we saw in Section~\ref{sec: Hochschild cohomology and localization}, there is a natural map of rings
\[ l:\ext^*_{R^e}(R,R) \to \ext^*_{R^e}(A,A)\]
It now appears that there might be two actions of $\ext^*_{R^e}(R,R)$ on $\Derived(R)$ - the original action, and an action induced by $l$. Fortunately, both actions are the same. We leave the proof of this fact to the reader.

Next, suppose that $R$ is a compact $R^e$-module and that $\uu = \zz(\pp)$ for some prime ideal $\pp \subset \Hring$. On the one hand, for any $A$-module $M$ we have that $\pi^*M = (\pi^*M)_\pp$ as $\Hring$-modules~\cite[Theorem 4.7]{BIKsupport}. Hence the action of every element in $\Hring \setminus \pp$ on $M$ is invertible. This implies that the triangulated category $\Im L_\uu$ has a natural action of $\Hring_\pp$ which extends the original action of $\Hring$.

On the other hand, Corollary~\ref{cor: Hochschild cohomology of localization} implies that $l$ induces a map $l_\pp:\Hring_\pp \to \ext^*_{R^e}(A,A)$. This gives a second action of $\Hring_\pp$ on $\Im L_\uu$, via $l_\pp$ and the action of $\ext^*_{R^e}(A,A)$ on $\Im L_\uu$. Because the action of $\ext^*_{R^e}(R,R)$ on $\Im L_\uu$ splits through $l$, one readily sees that this second action of $\Hring_\pp$ must be the same as the first one.

\subsection*{Left ring objects arising from regular sequences}
We will need a modified version of the following result from~\cite{EKMM}.

\begin{theorem}[{\cite[Theorem V.2.6]{EKMM}}]
Let $R$ be a commutative $\sphere$-algebra and suppose that $\pi^*R$ is concentrated in even degrees. Let $x$ be a regular element in $\pi^*R$. Then $R/x$ has a left ring object structure such that the obvious morphism $u_x:R \to R/x$ is the unit morphism.
\end{theorem}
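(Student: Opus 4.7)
The plan is to realize the required multiplication as an extension in the derived category. Smashing the defining triangle $\Sigma^{-|x|} R \xrightarrow{x} R \xrightarrow{u_x} R/x$ on the right with $R/x$ over $R$ produces a triangle
\[ \Sigma^{-|x|} R/x \xrightarrow{x \otimes 1_{R/x}} R/x \xrightarrow{u_x \otimes 1_{R/x}} R/x \otimes_R R/x \]
of left $R$-modules. A map $m : R/x \otimes_R R/x \to R/x$ of left $R$-modules satisfying $m \circ (u_x \otimes 1_{R/x}) = \ell_{R/x}$ exists precisely when the morphism $x \otimes 1_{R/x}$ is null in $\ext_R^{|x|}(R/x, R/x)$. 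Because $R$ is commutative, $u_x$ is automatically a map of bimodules, so the whole assertion reduces to this single vanishing.

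To establish the vanishing, I would first consider the restriction map $u_x^* : \ext_R^{|x|}(R/x, R/x) \to \ext_R^{|x|}(R, R/x) = \pi^{|x|}(R/x)$. By naturality of the left action of $x$, the class $u_x^*(x \otimes 1_{R/x})$ is represented by the composite $u_x \circ x$, i.e.\ by the image of $x$ under the unit $u_x$. Regularity of $x$ together with the cofiber long exact sequence give the identification $\pi^*(R/x) \cong \pi^*R / x\pi^*R$, so $u_x(x) = 0$ and $x \otimes 1_{R/x}$ lies in $\ker u_x^*$.

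The remaining step would be to show that $u_x^*$ is injective in degree $|x|$. Applying $\Hom_R(-, R/x)$ to the defining triangle yields a long exact sequence whose relevant segment reads
\[ \pi^{2|x|-1}(R/x) \to \ext_R^{|x|}(R/x, R/x) \xrightarrow{u_x^*} \pi^{|x|}(R/x) \xrightarrow{x} \pi^{2|x|}(R/x), \]
in which the connecting map $\pi^{|x|-1}(R/x) \to \pi^{2|x|-1}(R/x)$ is multiplication by $x$. Thus the kernel of $u_x^*$ in degree $|x|$ is a quotient of $\pi^{2|x|-1}(R/x)$. Since $|x|$ is even and $\pi^*(R/x)$ inherits even concentration from $\pi^*R$ via the regularity of $x$, the group $\pi^{2|x|-1}(R/x)$ vanishes. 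This forces $x \otimes 1_{R/x} = 0$ and produces the desired left ring object structure on $R/x$.

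The main obstacle is precisely this parity count: without the even-concentration hypothesis on $\pi^*R$ the connecting term $\pi^{2|x|-1}(R/x)$ could be nonzero and the multiplicative extension could genuinely fail to exist. Note also that the paper's definition of a left ring object demands neither associativity nor commutativity of $m$, so no further obstruction analysis is required once the null-homotopy is produced.
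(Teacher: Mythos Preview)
The paper does not give its own proof of this statement: it is quoted directly from EKMM, and the subsequent proposition (the non-commutative generalisation) merely says its argument is ``practically the same'' as that of EKMM. Your proof is correct and is in fact the standard EKMM argument: smash the defining triangle with $R/x$, then use the long exact sequence for $\ext_R^*(-,R/x)$ together with the even-degree hypothesis to see that the obstruction class $x\otimes 1_{R/x}$ vanishes, which produces the multiplication $m$. One minor remark: the clause ``in which the connecting map $\pi^{|x|-1}(R/x) \to \pi^{2|x|-1}(R/x)$ is multiplication by $x$'' does not belong to the four-term sequence you displayed and is not needed for the argument; the vanishing of $\pi^{2|x|-1}(R/x)$ by parity alone already forces $u_x^*$ to be injective in degree $|x|$.
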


For the rest of this section, $A$ is the left ring object $L_\uu^e R$, where $\uu = \zz(\pp)$ for some prime ideal $\pp \subset \Hring$. The modified version of~\cite[Theorem V.2.6]{EKMM} is given below. We emphasize that $R$ is not assumed to be commutative.

\begin{proposition}
\label{pro: Dividing by regular element gives ring object}
Let $A \to B$ be a morphism of left ring objects and let $x$ be an element of $\ext^*_{R^e}(A,A)$. Suppose that $\pi^*B$ is concentrated in even degrees and that the action of $x$ on $\pi^*B$ is regular. Then $B/x$ has a left ring object structure such that the composition $R \to B \to B/x$ is its unit.
\end{proposition}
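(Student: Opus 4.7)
The plan mirrors the EKMM argument just quoted for the commutative case $R/x$, now adapted to the map of left ring objects $A \to B$. Regularity of the $x$-action on $\pi^{*} B$ forces $|x|$ to be even (since $\pi^{*} B$ is concentrated in even degrees), and the long exact sequence attached to the triangle $\Sigma^{-|x|} B \xrightarrow{x_B} B \to B/x$ collapses into short exact pieces, giving $\pi^{*}(B/x) \cong \pi^{*} B / x\pi^{*} B$, also concentrated in even degrees. I would take the unit to be $u' = (B\to B/x) \circ u_B : R \to B/x$, a composite of bimodule maps and hence a bimodule map.

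Next I would construct the product $m : B/x \otimes_R B/x \to B/x$ as a left $R$-module map by extending the composite $\bar m = (B\to B/x) \circ m_B : B \otimes_R B \to B/x$ in two stages. Smashing the defining triangle of $B/x$ on the left with $B$ gives the triangle $B \otimes_R \Sigma^{-|x|} B \xrightarrow{1 \otimes x_B} B \otimes_R B \to B \otimes_R B/x$; extending $\bar m$ along the right-hand map requires showing that $\bar m \circ (1_B \otimes x_B)$ is null. A successful extension produces $\tilde m : B \otimes_R B/x \to B/x$, and a symmetric step then extends $\tilde m$ along $B \otimes_R B/x \to B/x \otimes_R B/x$, with analogous obstruction $\tilde m \circ (x_B \otimes 1_{B/x})$.

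Vanishing of these two obstruction classes is the main obstacle and is where the parity hypothesis is used. Following the template of EKMM V.2.6, the cleanest approach uses a universal-coefficient-type spectral sequence with $E_2 = \ext^{*,*}_{\pi^{*} R}\bigl(\pi^{*}(B \otimes_R B/x),\, \pi^{*}(B/x)\bigr)$ converging to $\pi^{*}\Hom_R(B \otimes_R B/x, B/x)$: because $\pi^{*} B$ and $\pi^{*}(B/x)$ are concentrated in even degrees and $|x|$ is even, the obstruction classes land in odd total degree, where $E_2$ vanishes by parity. A more concrete route, which I would try first, is to identify $\bar m \circ (1_B \otimes x_B)$ with $(B \to B/x) \circ x_B \circ m_B$ using naturality of the $\ext^{*}_{R^e}(A,A)$-action on the $A$-module $B$ inherited from the morphism $A \to B$ of left ring objects; the composite $(B \to B/x) \circ x_B$ is null by the very definition of $B/x$ as the cofiber of $x_B$, and the same reasoning dispatches the second obstruction.

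Finally, the unit property $m(u' \otimes 1_{B/x}) = \ell_{B/x}$ is automatic from the construction: precomposing $m$ with $u' \otimes 1_{B/x}$ factors through $u_B \otimes 1_{B/x} : R \otimes_R B/x \to B \otimes_R B/x$, which under $\tilde m$ reduces to $(B \to B/x) \circ m_B \circ (u_B \otimes 1_B)$ composed with $1_B \otimes (B \to B/x)$, i.e.\ to $(B \to B/x) \circ \ell_B = \ell_{B/x}$. Thus $(B/x, u', m)$ is a left ring object with the required unit, completing the plan.
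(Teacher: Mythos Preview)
Your plan is the paper's own: once $B/x$ is known to be an $R$-bimodule, the paper simply says ``the rest of the proof is practically the same as that of~\cite[Theorem V.2.6]{EKMM}, apart from the fact that we can only prove a left unit relation,'' and your two-stage extension with a parity obstruction argument is precisely that EKMM template.

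There is one point the paper makes explicit that you skip: why $B/x$ is an $R$-bimodule at all, so that $R \to B/x$ can even be called a bimodule map. A priori $x$ lives in $\ext^*_{R^e}(A,A)$, not in $\ext^*_{R^e}(B,B)$, and the Koszul construction $B/x$ is carried out in $\Derived(R)$, not in $\Derived(R^e)$. The paper's fix is this: the morphism $A \to B$ of left ring objects makes $B$ an $A$-module, so the bimodule map $u_A \otimes 1_B : B \cong R\otimes_R B \to A \otimes_R B$ is a $\pi_*$-isomorphism and hence an isomorphism in $\Derived(R^e)$. Under this identification $x_B$ is the bimodule map $x \otimes_R 1_B$, so its cofiber $B/x$ inherits a bimodule structure and $B \to B/x$ is a bimodule map. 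You should insert this step before invoking the unit.

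A smaller issue: your ``concrete route'' for the first obstruction does not work as written. Naturality of the $\ext^*_{R^e}(A,A)$-action on $\Im L_{\zz(\pp)}$ means that $x$ acts on $B \otimes_R B$ through the \emph{left} factor, i.e.\ $x_{B\otimes_R B} = x_B \otimes 1_B$; centrality then gives $m_B \circ (x_B \otimes 1_B) = x_B \circ m_B$, but not the identity with $1_B \otimes x_B$ that your first extension requires. Likewise for the second stage, naturality rewrites $\tilde m \circ (x_B \otimes 1_{B/x})$ as $x_{B/x}\circ \tilde m$, but $x_{B/x}$ is only zero on $\pi^*$, not as a map. So at least one of the two obstructions genuinely needs the parity argument you also propose; the naturality shortcut alone does not close the gap.
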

\begin{proof}
Note that the morphism $A \to B$ makes $B$ into an $A$-module. This implies that $B \to A \otimes_R B$ is an isomorphism of $R$-bimodules. 
From this we conclude that $x$ acts on $B$ by a map of bimodules and hence $B/x$ is a bimodule.  The rest of the proof is practically the same as that of~\cite[Theorem V.2.6]{EKMM}, apart from the fact that we can only prove a left unit relation.
\end{proof}


\begin{corollary}
\label{cor: Dividing by a regular sequence may yield a ring object}
Suppose that $\pi^*A$ is concentrated in even degrees. Let $X=(x_1,...,x_n)$ be a sequence of elements in $\ext^*_{R^e}(A,A)$  which is a $\pi^*A$-regular sequence. Then $A/X$ has a ring object structure such that the composition $R \to A \to A/X$ is the unit morphism.
\end{corollary}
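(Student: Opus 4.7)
The plan is to prove Corollary~\ref{cor: Dividing by a regular sequence may yield a ring object} by induction on $n$, the length of the regular sequence, applying Proposition~\ref{pro: Dividing by regular element gives ring object} at each stage. Set $B_0 = A$ with its given left ring object structure and, for $1 \le i \le n$, set $B_i = A/(x_1,\dots,x_i)$. The induction hypothesis to carry along will be threefold: (a) $B_i$ has a left ring object structure, (b) there is a canonical morphism of left ring objects $A \to B_i$ whose composition with the unit $u_A \colon R \to A$ is the unit of $B_i$, and (c) $\pi^*B_i$ is concentrated in even degrees and is isomorphic to $\pi^*A/(x_1,\dots,x_i)$ as a $\pi^*A$-module.

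The base case $i=0$ is immediate. For the inductive step, I would apply Proposition~\ref{pro: Dividing by regular element gives ring object} to the morphism of left ring objects $A \to B_i$ together with the element $x_{i+1} \in \ext^*_{R^e}(A,A)$. Condition~(c) supplies the evenness hypothesis, and regularity of the action of $x_{i+1}$ on $\pi^*B_i$ is exactly the $(i+1)$-th step of the assumption that $(x_1,\dots,x_n)$ is $\pi^*A$-regular, in view of the identification $\pi^*B_i \cong \pi^*A/(x_1,\dots,x_i)$. The proposition then gives a left ring object structure on $B_{i+1} = B_i/x_{i+1}$ whose unit is the composition $R \to B_i \to B_{i+1}$, and pre-composing with the inductive morphism $A \to B_i$ yields the desired morphism of left ring objects $A \to B_{i+1}$ whose composition with $u_A$ is the unit of $B_{i+1}$.

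To maintain condition~(c) at stage $i+1$, I would observe that every $x_j$ in a $\pi^*A$-regular sequence must have \emph{even} degree: since $\pi^*B_i$ is concentrated in even degrees, multiplication by an element of odd degree would land in the odd-degree (hence zero) part of $\pi^*B_i$, contradicting regularity unless $\pi^*B_i$ vanishes entirely (in which case the statement is trivial). Consequently, forming the triangle $\Sigma^{-|x_{i+1}|}B_i \to B_i \to B_{i+1}$ and taking homotopy yields a short exact sequence that identifies $\pi^*B_{i+1}$ with $\pi^*B_i/x_{i+1}$, keeping everything in even degrees.

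I expect the main subtle point to be verifying that the inductively constructed map $A \to B_{i+1}$ is genuinely a morphism of left ring objects, not merely of bimodules with compatible units — one needs compatibility with the products. This should be traceable from the construction inside Proposition~\ref{pro: Dividing by regular element gives ring object}: the product on $B/x$ there is built so as to make the canonical map $B \to B/x$ respect the left multiplications, whence composing with the ring object morphism $A \to B_i$ preserves this property. Everything else is bookkeeping along the induction, so the corollary follows by iterating $n$ times.
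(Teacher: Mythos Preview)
Your proposal is correct and follows essentially the same approach as the paper: the paper's proof is the single sentence ``The proof is a simple induction on $n$ using Proposition~\ref{pro: Dividing by regular element gives ring object},'' and you have correctly fleshed out exactly that induction, including the auxiliary observations (evenness of the $x_i$, identification of $\pi^*B_i$, and the need for $A \to B_{i+1}$ to be a morphism of left ring objects) that make the iteration go through.
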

\begin{proof}
The proof in a simple induction on $n$ using Proposition~\ref{pro: Dividing by regular element gives ring object}.
\end{proof}


\subsection*{Module structure}
We first need the following result, which is a minor modification of~\cite[Lemma V.2.4]{EKMM} (the only thing modified is, in fact, the setup).
\begin{lemma}
Let $A \to B$ be a morphism of left ring objects and let $x$ be an element of $\ext^n_{R^e}(A,A)$. Suppose that $B/x$ has a left ring object structure such that obvious morphism $B \to B/x$ is morphism of left ring objects. Then the morphism $x:\Sigma^n B/x \to B/x$ in $\Derived(R)$ is zero.
\end{lemma}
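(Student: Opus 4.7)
The plan is to express the action $x_{B/x}$ as left multiplication by a single element $\tilde x_{B/x}$ of $\pi^n(B/x)$, and then to show this element vanishes via naturality of the $A$-action and the defining cofibre triangle.

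Let $\bar u_B\colon A\to B$ denote the given morphism of left ring objects, $p\colon B\to B/x$ the canonical projection, and set $\bar u_{B/x}:=p\circ\bar u_B\colon A\to B/x$. For a left ring object $C$ above $A$ (in our case $C=B$ or $C=B/x$), the natural $A$-module structure map is $a_C=m_C\circ(\bar u_C\otimes 1_C)$, and a direct expansion of the definition of the $x$-action on $C$ produces the factorization
\[
x_C \;=\; m_C\circ\bigl(\tilde x_C\otimes 1_C\bigr)\circ \ell_C^{-1},
\qquad
\tilde x_C \;:=\; \bar u_C\circ x\circ \Sigma^{-n}u_A\colon \Sigma^{-n}R\to C,
\]
where $\ell_C\colon R\otimes_R C\xrightarrow{\sim}C$ is the unit isomorphism. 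Hence it suffices to show $\tilde x_{B/x}=0$.

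The $A$-action is natural with respect to morphisms of left ring objects above $A$: such a morphism is automatically $A$-linear because it is compatible with both the product and the chosen map from $A$. Applied to $\bar u_B\colon A\to B$, naturality gives $x_B\circ\Sigma^{-n}\bar u_B = \bar u_B\circ x$ (using $x_A=x$). Composing with $p$ and invoking $p\circ x_B=0$ from the defining cofibre triangle yields
\[
p\circ \bar u_B\circ x \;=\; p\circ x_B\circ \Sigma^{-n}\bar u_B \;=\; 0,
\]
that is, $\bar u_{B/x}\circ x = 0$. Therefore $\tilde x_{B/x}=\bar u_{B/x}\circ x\circ \Sigma^{-n}u_A = 0$, and the factorization of the previous paragraph gives $x_{B/x}=0$.

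The main technical point is the factorization in the first paragraph: one has to unpack the $A$-module structure on a left ring object above $A$ using only the product and the \emph{left} unit axiom (right unitality being unavailable). Once this factorization is in place, the vanishing of $\tilde x_{B/x}$ is essentially immediate from naturality plus the cofibre relation.
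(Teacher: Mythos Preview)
Your factorization $x_C = m_C\circ(\tilde x_C\otimes 1_C)\circ\ell_C^{-1}$ is correct and uses only the left unit, as you note. The gap is in the second step, at the claim ``$x_A = x$''. Unwinding the definition, $x_A = m_A\circ(x\otimes 1_A)\circ m_A^{-1}$, so $x_A = x$ is equivalent to $m_A\circ(x\otimes 1_A) = x\circ m_A$. Since $m_A$ is the inverse of $(u_A\otimes 1_A)\circ\ell_A^{-1}$, this reduces (after precomposing with $u_A$) to $m_A\bigl((x u_A)\otimes u_A\bigr) = (x u_A)\circ\ell_R$, which is precisely a \emph{right} unit relation for $m_A$ on the element $x u_A$. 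You explicitly acknowledge right unitality is unavailable; indeed for $A = L_\uu^e R$ over a noncommutative $R$ there is no reason for $1_A\otimes x$ and $x\otimes 1_A$ to agree on $A\otimes_R A$. Without $x_A = x$, naturality only gives $\bar u_B\circ x_A = x_B\circ \bar u_B$, and one cannot deduce $p\circ\bar u_B\circ x = 0$ from $p\circ x_B = 0$.

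The argument from~\cite[Lemma V.2.4]{EKMM} that the paper invokes proceeds differently and needs only the left unit. One exhibits $B/x$ as a retract of $B\otimes_R B/x$ via the $B$-module structure: the section is $s = (u_B\otimes 1_{B/x})\circ\ell_{B/x}^{-1}$ and the retraction is $a = m_{B/x}\circ(p\otimes 1_{B/x})$; the identity $a\circ s = 1$ uses only the \emph{left} unit of $m_{B/x}$. The $x$-action on $B\otimes_R B/x$ is $x_B\otimes 1_{B/x}$ (the localization isomorphism $A\otimes_R(-)\cong(-)$ is applied to the left factor), and naturality of the $x$-action in $\Derived(R)$ gives
\[
x_{B/x} \;=\; a\circ(x_B\otimes 1_{B/x})\circ s \;=\; m_{B/x}\circ\bigl((p\circ x_B\circ u_B)\otimes 1_{B/x}\bigr)\circ\ell_{B/x}^{-1} \;=\; 0,
\]
since $p\circ x_B = 0$ from the defining triangle. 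The key difference from your approach is that here the element being shown to vanish is $p\circ x_B\circ u_B$, and its vanishing is immediate; your $\tilde x_{B/x} = p\circ\bar u_B\circ x\circ u_A$ differs from this by exactly the missing right unit.
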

\begin{proof}
The proof is the same as that of~\cite[Lemma V.2.4]{EKMM}.
\end{proof}

\begin{lemma}
\label{lem: Multiplying ring-object by last devided element is zero}
Let $A \to B$ be a morphism of left ring objects and let $y$ be an element of $\ext^n_{R^e}(A,A)$ such that the morphism $y:\Sigma^n B \to B$ in $\Derived(R)$ is zero. Suppose that $B/x$ has a left ring object structure such that obvious morphism $B \to B/x$ is morphism of left ring objects. Then the morphism $y:\Sigma^n B/x \to B/x$ in $\Derived(R)$ is also zero.
\end{lemma}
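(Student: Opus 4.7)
The plan is to adapt the argument of the preceding lemma (itself modelled on EKMM Lemma V.2.4) to propagate the vanishing hypothesis $y_B = 0$ to $y_{B/x} = 0$. The strategy has two stages: first use naturality to reduce the claim to the vanishing of a single element of $\pi^n(B/x)$, namely the value of $y$ on the unit $\iota_{B/x}$; then use the left ring object structure of $B/x$ to translate this element vanishing into vanishing of the entire morphism.

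For the first stage, observe that since $A \xrightarrow{\alpha} B \xrightarrow{\pi} B/x$ is a composition of morphisms of left ring objects, the map $\pi$ is in particular an $A$-module morphism. Naturality of the $y$-action on $A$-modules then gives
\[ y_{B/x} \circ \Sigma^n \pi \;=\; \pi \circ y_B \;=\; \pi \circ 0 \;=\; 0. \]
Pre-composing with $\Sigma^n \iota_B$ and using $\iota_{B/x} = \pi \circ \iota_B$ yields
\[ y \cdot \iota_{B/x} \;=\; y_{B/x} \circ \Sigma^n \iota_{B/x} \;=\; \pi \circ y_B \circ \Sigma^n \iota_B \;=\; 0 \]
in $\pi^n(B/x)$.

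For the second stage, unpack the $A$-action on $B/x$ via the factorisation $A \otimes_R B/x \xrightarrow{(\pi\alpha) \otimes 1} B/x \otimes_R B/x \xrightarrow{\mu_{B/x}} B/x$ coming from the left ring object morphism $\pi\alpha \colon A \to B/x$. A direct computation then realises $y_{B/x}$ as ``left multiplication in $B/x$ by the class $y \cdot \iota_{B/x}$'', i.e.
\[ y_{B/x} \;=\; \mu_{B/x} \circ ((y \cdot \iota_{B/x}) \otimes 1_{B/x}) \circ \ell^{-1}_{B/x}. \]
Combined with the previous step this yields $y_{B/x} = 0$. The step that requires genuine care — and which is carried over essentially verbatim from the proof of the preceding lemma — is this last identification of $y_{B/x}$ with the corresponding ``left multiplication'' map, since it must be set up using only the left unit axiom of a left ring object and does not have recourse to associativity or to a right unit.
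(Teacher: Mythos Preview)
Your argument is correct, but it takes a longer and more computational route than the paper's. The paper's proof is a two-line retract argument: since $B\to B/x$ is a morphism of left ring objects, $B/x$ is a $B$-module, hence $B/x$ is a retract of $B\otimes_R B/x$ in $\Derived(R)$ (via the section $(u_B\otimes 1)\circ\ell^{-1}$ and the $B$-action as retraction). Because $y$ acts on $B\otimes_R B/x$ through the left factor as $y_B\otimes 1_{B/x}=0$, naturality of $y$ forces $y_{B/x}=0$ as well.

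Your approach instead factors through the multiplication on $B/x$ itself: you first push the vanishing $y_B=0$ along $\pi$ to obtain $y\cdot\iota_{B/x}=0$ in $\pi^*(B/x)$, and then identify $y_{B/x}$ with left multiplication by that homotopy class. This second stage is where the extra work lies---you must verify the ``left multiplication'' identity using only the left unit axiom and naturality of the $\ext^*_{R^e}(A,A)$-action (in particular the equality $y_{B/x}\otimes 1 = y_{B/x\otimes_R B/x}$ and the commutation of $y$ past $\mu_{B/x}$). All of that checks out, but the paper's retract trick sidesteps it entirely: it never needs the product $\mu_{B/x}$, only the $B$-module structure on $B/x$, and it uses $y_B=0$ directly rather than passing through the unit element. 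Your route is closer in spirit to the EKMM argument you cite; the paper's is shorter and slightly more robust (it would work for any $A$-linear retract of a $B$-tensor, not just a quotient ring object).
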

\begin{proof}
Since $B \to B/x$ is morphism of left ring objects, then $B/x$ is a $B$-module. In particular, $B/x$ is a retract of $B \otimes_R B/x$ in the category of $A$-modules. The result follows.
\end{proof}

\begin{corollary}
\label{cor: Multiplying ring Koszul object by ideal elements is zero}
Suppose that $\pi^*A$ is concentrated in even degrees. Let $X=(x_1,...,x_n)$ be a sequence of elements in $\ext^*_{R^e}(A,A)$  which is a $\pi^*A$-regular sequence. Then for every element $x$ in the ideal generated by $X$, the morphism $x:\Sigma^nA/X \to A/X$ is zero.
\end{corollary}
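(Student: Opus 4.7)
The plan is to reduce the problem to showing that each individual generator $x_i$ acts as zero on $A/X$, and then handle arbitrary elements of the ideal. For $0 \leq i \leq n$, set $A_i = A/(x_1,\ldots,x_i)$, so that $A_0 = A$ and $A_n = A/X$. Iterating Proposition~\ref{pro: Dividing by regular element gives ring object}, each $A_i$ carries a left ring object structure, and these fit into a chain of morphisms of left ring objects $A = A_0 \to A_1 \to \cdots \to A_n = A/X$; in particular, each composite $A \to A_j$ is a morphism of left ring objects.

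I would then show, by induction on $j - i$, that the morphism $x_i \colon \Sigma^{|x_i|} A_j \to A_j$ is zero for every pair $i \leq j$. The base case $j = i$ follows from the first (unlabelled) lemma of this subsection, the modification of~\cite[Lemma V.2.4]{EKMM}: since $A_i = A_{i-1}/x_i$ and $A_{i-1} \to A_i$ is a morphism of left ring objects, the element $x_i$ is killed on the cofiber $A_i$. For the inductive step, assuming $x_i$ acts as zero on $A_{j-1}$, apply Lemma~\ref{lem: Multiplying ring-object by last devided element is zero} with $A \to B$ being $A \to A_{j-1}$, $y := x_i$, and $x := x_j$; the needed hypothesis that $A_{j-1} \to A_j$ is a morphism of left ring objects has been arranged above. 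Specializing to $j = n$ shows that every $x_i$ acts as zero on $A/X$.

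Finally, an arbitrary element $x$ in the ideal generated by $X$ can be written as $x = \sum_i a_i x_i$ with $a_i \in \ext^*_{R^e}(A, A)$, using the graded-commutativity of Hochschild cohomology. Viewing $A/X$ as an $A$-module through the morphism $A \to A/X$ of left ring objects, the morphism induced by each summand $a_i x_i$ on $A/X$ factors through the action of $x_i$ on $A/X$, which is zero; summing over $i$ then gives that $x$ itself acts as zero on $A/X$, as claimed.

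The main obstacle is the bookkeeping required to guarantee that the successive quotient maps $A_{j-1} \to A_j$ are genuine morphisms of left ring objects (and not merely unit-preserving maps), so that the hypotheses of Lemma~\ref{lem: Multiplying ring-object by last devided element is zero} can be invoked at every inductive stage. This compatibility must be extracted from the EKMM construction underlying Proposition~\ref{pro: Dividing by regular element gives ring object}, exactly as the earlier results of this section implicitly rely on it.
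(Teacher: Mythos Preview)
Your proof is correct and follows essentially the same route as the paper: show each generator $x_i$ acts as zero on $A/X$ by combining the unlabelled lemma (the modification of \cite[Lemma V.2.4]{EKMM}) with an induction based on Lemma~\ref{lem: Multiplying ring-object by last devided element is zero}, and then deduce the claim for arbitrary elements of the ideal. The paper's proof is a two-line compression of exactly this argument, and your closing caveat about the maps $A_{j-1}\to A_j$ being morphisms of left ring objects is a genuine bookkeeping point that the paper also leaves implicit (to be extracted from the EKMM construction behind Proposition~\ref{pro: Dividing by regular element gives ring object}).
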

\begin{proof}
From Lemma~\ref{lem: Multiplying ring-object by last devided element is zero} we see that the statement is true for $x\in \{x_1,...,x_n\}$. The result follows.
\end{proof}

An immediate consequence of the last corollary is the following.

\begin{corollary}
\label{cor: Homotopy of M/X is associative module}
Suppose that $\pi^*A$ is concentrated in even degrees. Let $X=(x_1,...,x_n)$ be a sequence of elements in $\ext^*_{R^e}(A,A)$  which is a $\pi^*A$-regular sequence. Then for every $A$-module $M$ the $\pi^*A$-module $\pi^*(M/X)$ is naturally a $(\pi^*A)/(X)$-module.
\end{corollary}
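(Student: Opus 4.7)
The plan is to identify $M/X\simeq A/X\otimes_R M$ and then import the vanishing of Corollary~\ref{cor: Multiplying ring Koszul object by ideal elements is zero} to $M/X$ by tensoring with $1_M$. The subsection's standing hypothesis is $A=L_\uu^e R$, so Lemma~\ref{lem: Localization ring objects bijects homotopy} provides the canonical isomorphism $M\simeq A\otimes_R M$ for every $A$-module $M$. Tensoring the defining triangle $\Sigma^{-|x_1|}A\xrightarrow{x_1}A\to A/x_1$ over $R$ with $M$ then reproduces the cone triangle defining $M/x_1$, and iterating over all the $x_i$ yields an isomorphism $M/X\simeq A/X\otimes_R M$ in $\Derived(R)$.

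By Corollary~\ref{cor: Dividing by a regular sequence may yield a ring object}, $A/X$ is itself a left ring object and $A\to A/X$ is a morphism of left ring objects. Acting on the left factor of $A/X\otimes_R M$ therefore equips $M/X$ with an $A/X$-module structure, and by restriction along $A\to A/X$ with an $A$-module structure. This is the natural source of the $\pi^*A$-action on $\pi^*(M/X)$ referenced in the statement.

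It remains to check that each generator $x_i$ acts as zero. For any $x$ in the ideal $(X)\subset\ext^*_{R^e}(A,A)$, Corollary~\ref{cor: Multiplying ring Koszul object by ideal elements is zero} asserts that the morphism $x:\Sigma^{-|x|}A/X\to A/X$ is zero in $\Derived(R)$; tensoring this vanishing with $1_M$ over $R$ shows that the induced action of $x$ on $A/X\otimes_R M\simeq M/X$ is also zero, and hence annihilates $\pi^*(M/X)$. Consequently the image of each generator of $(X)$ in $\pi^*A$ acts trivially, and the $\pi^*A$-structure on $\pi^*(M/X)$ factors through $(\pi^*A)/(X)$.

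The only subtlety, and what I expect to be the main bookkeeping obstacle, is verifying that under the isomorphism $M\simeq A\otimes_R M$ the action of $x_i$ on $M$ really corresponds to $x_i\otimes 1_M$ on the right-hand side, so that the Koszul cones assemble to give $M/X\simeq A/X\otimes_R M$. This is a naturality statement for the $R^e$-bimodule morphism $x_i$ combined with Lemma~\ref{lem: Localization ring objects bijects homotopy}; everything after it reduces to tensoring Corollary~\ref{cor: Multiplying ring Koszul object by ideal elements is zero} with $M$, consistent with the excerpt's characterization of the result as an immediate consequence of that corollary.
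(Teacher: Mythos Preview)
Your argument is correct and is exactly the unpacking the paper intends by ``immediate consequence'': identify $M/X\simeq A/X\otimes_R M$ and tensor the vanishing of Corollary~\ref{cor: Multiplying ring Koszul object by ideal elements is zero} with $1_M$. The one point you flag as a subtlety is not one: the action of $x\in\ext^*_{R^e}(A,A)$ on an $A$-module $M$ is \emph{defined} in the paper as the composite $\Sigma^{|x|}M\cong\Sigma^{|x|}A\otimes_R M\xrightarrow{x\otimes 1}A\otimes_R M\cong M$, so the identification of cones $M/x_i\simeq(A/x_i)\otimes_R M$ is immediate and no further naturality check is needed.
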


\subsection*{Regular local rings}
We now consider the case where $\pi^*A$ is a graded commutative local ring concentrated in even degrees. Note that the action of $\ext_{R^e}^*(A,A)$ on $A$ induces is a natural map
\[ \alpha: \ext_{R^e}^*(A,A) \to \ext^*_R(A,A) = \pi^*A.\]
Let $x_1,...,x_n\in \ext^*_{R^e}(A,A)$ be a $\pi^*A$-regular such that $(\alpha(x_1),...,\alpha(x_n))$ is the maximal ideal of $\pi^*A$. As usual, $X$ will denote the sequence $x_1,...,x_n$.

\begin{lemma}
\label{lem: Kernel of mapping from a field is zero}
Suppose that $\alpha$ is surjective. Let $M$ be an $A$-module and let $f:A/X \to M$ be a morphism in $\Derived(R)$ such that $\pi^*f \neq 0$. Then the kernel of $\pi^*f$ is zero.
\end{lemma}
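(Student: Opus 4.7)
The plan is to identify $\pi^*(A/X)$ explicitly as a graded field, so that any proper graded $\pi^*A$-submodule of it must vanish.

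First, I would show by induction on $n$ that $\pi^*(A/X) \cong \pi^*A/(\alpha(X))$ as graded $\pi^*A$-modules. Write $A_i = A/(x_1,\ldots,x_i)$, with $A_0 = A$. The defining triangle $\Sigma^{-|x_i|} A_{i-1} \xrightarrow{x_i} A_{i-1} \to A_i$ induces a long exact sequence in homotopy, in which the connecting map is multiplication by $\alpha(x_i)$ (on the left-module structure). Since the images $\alpha(x_1),\ldots,\alpha(x_n)$ form a regular sequence on $\pi^*A$ and $\pi^*A$ is concentrated in even degrees, the long exact sequence splits into short exact sequences and yields $\pi^*A_i \cong \pi^*A/(\alpha(x_1),\ldots,\alpha(x_i))$ at each stage. (Surjectivity of $\alpha$ ensures that $(\alpha(X))$ is genuinely the maximal homogeneous ideal of $\pi^*A$ and that each $x_i$ in $\ext^*_{R^e}(A,A)$ acts as $\alpha(x_i)$ on the $\pi^*A$-module structure.)

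Next, I would observe that $\pi^*A/(\alpha(X))$ is a graded field. By hypothesis $(\alpha(X))$ is the unique maximal homogeneous ideal of the graded-commutative local ring $\pi^*A$, so the only homogeneous ideals of $\pi^*A/(\alpha(X))$ are zero and the whole ring; equivalently, every nonzero homogeneous element is a unit. Combining this with Corollary~\ref{cor: Multiplying ring Koszul object by ideal elements is zero} (or equivalently Corollary~\ref{cor: Homotopy of M/X is associative module}), the $\pi^*A$-action on $\pi^*(A/X)$ factors through $\pi^*A/(\alpha(X))$, so $\pi^*(A/X)$ is naturally a graded module over this graded field.

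Finally, the kernel $K$ of $\pi^*f \colon \pi^*(A/X) \to \pi^*M$ is a graded $\pi^*A$-submodule of $\pi^*(A/X)$, hence a graded $\pi^*A/(\alpha(X))$-submodule of the graded field $\pi^*A/(\alpha(X))$. Such a submodule is either $0$ or the whole of $\pi^*(A/X)$. Since $\pi^*f \neq 0$, the latter is impossible, and therefore $K = 0$.

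The argument is essentially formal once the identification $\pi^*(A/X) \cong \pi^*A/(\alpha(X))$ is in place; the main conceptual point is that the regularity of $(\alpha(X))$ together with its maximality forces the Koszul quotient to be a graded field, after which the vanishing of the kernel is automatic. No step is a serious obstacle provided the earlier material on left ring objects and the behaviour of $\pi^*$ on Koszul objects is used as established.
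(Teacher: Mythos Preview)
Your proof is correct and rests on the same idea as the paper's: $\pi^*(A/X)\cong\pi^*A/(\alpha(X))$ is a graded field, so a proper submodule of it must vanish. The paper reaches the same conclusion by an explicit contradiction argument---given a nonzero $y\in\ker\pi^*f$, it lifts $y$ through $u$ and then through $\alpha$ to an element $y''\in\ext^*_{R^e}(A,A)$, shows that the natural transformation induced by $y''$ acts invertibly on $A/X$ (since $y$ is a unit in the field and its inverse can likewise be lifted via $\alpha$), and concludes that every element of $\pi^*(A/X)$ is a multiple of $y$, forcing $\pi^*f=0$.

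The one step in your write-up that deserves a sentence of justification is the claim that $\ker\pi^*f$ is a $\pi^*A$-submodule, i.e.\ that $\pi^*f$ is $\pi^*A$-linear; this is precisely what the paper's lifting manoeuvre is doing. In your framework the reason is that the $\ext^*_{R^e}(A,A)$-action on $\Im L_\uu$ is by natural transformations, so $\pi^*f$ is automatically $\ext^*_{R^e}(A,A)$-linear, and surjectivity of $\alpha$ together with Lemma~\ref{lem: Localization ring objects bijects homotopy} identifies this with the $\pi^*A$-action on both source and target. Once that is said, your version is the cleaner of the two.
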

\begin{proof}
Clearly, the morphism $u:A \to A/X$ induces the obvious surjection $\pi^*A \to (\pi^*A)/(X) \cong \pi^*(A/X)$. Suppose $y$ is a non-zero element in the kernel of $\pi^nf$. There exists $y' \in \pi^nA$ such that $y=u y'$. Choose an element $y'' \in \ext_{R^e}^*(A,A)$ such that $\alpha(y'') = y'$. Consider the morphism $\varphi:\Sigma^n A/X \to A/X$ which is the action of $y''$. Clearly $\pi^*(\varphi)(1)=y$. Note that $\varphi$ is invertible: since $y \in \pi^*(A/X)$ is invertible we can choose an element $z'' \in \ext^*_{R^e}(A,A)$ such that $y^{-1}=u \alpha(z'')$, then the action of $z''$ on $A/X$ is the inverse of $\varphi$. Hence $\pi^*\varphi$ is an isomorphism and therefore for every $b \in \pi^*(A/X)$ we have that $b=ay$ for some $a \in \pi^* A$. But this implies $\pi^*f = 0$, in contradiction.
\end{proof}


\begin{lemma}
\label{lem: Field module is direct sum}
Suppose that $\alpha$ is surjective. Then for every $A$-module $M$ the object $M/X$ is equal to a direct sum of copies of suspensions of $A/X$. In particular, if $M/X\neq 0$ then $A/X$ is a retract of $M/X$.
\end{lemma}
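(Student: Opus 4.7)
The plan is to exploit the fact that, under the hypotheses, $\pi^*(A/X) \cong (\pi^*A)/(\alpha(X))$ is a graded residue field, so $\pi^*(M/X)$ is automatically free over it. I would lift a homogeneous basis to morphisms $\Sigma^{n_i}(A/X) \to M/X$ and show the assembled coproduct is a $\pi^*$-isomorphism.

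First, I would upgrade $M/X$ to an $A/X$-module in the sense of Section~\ref{sec: Left ring objects}. Because $A = L_\uu^e R$ and $M$ is an $A$-module, Lemma~\ref{lem: Localization ring objects bijects homotopy} gives an isomorphism $M \cong A \otimes_R M$ via $u \otimes 1_M$. Tensoring the Koszul triangles defining $A/X$ on the right with $M$ then yields $M/X \cong (A/X) \otimes_R M$ in $\Derived(R)$, and composing with the left ring object product $m : A/X \otimes_R A/X \to A/X$ furnished by Corollary~\ref{cor: Dividing by a regular sequence may yield a ring object} produces
\[ a : A/X \otimes_R M/X \xrightarrow{\cong} (A/X \otimes_R A/X) \otimes_R M \xrightarrow{m \otimes 1_M} A/X \otimes_R M \xrightarrow{\cong} M/X; \]
the unit axiom $a(u \otimes 1_{M/X}) = \ell_{M/X}$ is immediate from $m(u \otimes 1_{A/X}) = \ell_{A/X}$. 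Since $(\alpha(X))$ is the maximal ideal of $\pi^*A$, Corollary~\ref{cor: Homotopy of M/X is associative module} identifies $\pi^*(M/X)$ as a module over the graded field $\pi^*(A/X) \cong (\pi^*A)/(\alpha(X))$, and hence as a free module. Pick a homogeneous basis $\{b_i \in \pi^{n_i}(M/X)\}_{i \in I}$ and use Lemma~\ref{lem: Ring object maps surject module homotopy}, applied to the ring object $A/X$, to lift each $b_i$ through $u : R \to A/X$ to a morphism $\tilde b_i : \Sigma^{n_i}(A/X) \to M/X$.

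Assembling $f = \bigoplus_i \tilde b_i : \bigoplus_i \Sigma^{n_i}(A/X) \to M/X$, I would verify that $\pi^*f$ is an isomorphism. An induction along the Koszul triangles shows that $A/X$, and hence both the domain and codomain of $f$, are $L_\uu$-local, so their $A$-module structures are forced to be the inverse of $u \otimes 1$ by Lemma~\ref{lem: Localization ring objects bijects homotopy}; consequently every morphism in $\Derived(R)$ between them is automatically $A$-linear. Hence each $\pi^*\tilde b_i$ is $\pi^*A$-linear, and in fact $\pi^*(A/X)$-linear since $(X)$ annihilates both sides by Corollary~\ref{cor: Multiplying ring Koszul object by ideal elements is zero}. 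Thus $\pi^*f$ is the canonical $\pi^*(A/X)$-linear map from a free module sending standard generators to $\{b_i\}$, hence an isomorphism, and $f$ itself is an isomorphism in $\Derived(R)$. The final assertion of the lemma follows because any summand of a direct sum is a retract. The principal obstacle I anticipate is the first step: promoting $M/X$ from the homotopy-level action of Corollary~\ref{cor: Homotopy of M/X is associative module} to an honest $A/X$-module structure, which leans crucially on $A = L_\uu^e R$ and the identification $M \cong A \otimes_R M$; should the linearity argument in the final step prove too delicate, Lemma~\ref{lem: Kernel of mapping from a field is zero} can be used as a substitute to force injectivity of $\pi^*f$ summand by summand.
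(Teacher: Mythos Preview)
Your approach is essentially the same as the paper's: identify $\pi^*(M/X)$ as a vector space over the graded field $\pi^*(A/X)$, lift a basis via Lemma~\ref{lem: Ring object maps surject module homotopy}, and check that the resulting coproduct map is a $\pi^*$-isomorphism. The only difference is in the last verification: the paper invokes Lemma~\ref{lem: Kernel of mapping from a field is zero} to force each $\pi^*\Psi_b$ to be injective (which, since the source is a field and the targets of $1$ form a basis, gives the isomorphism), whereas you argue directly that every map between $L_\uu$-local objects is $A$-linear and hence $\pi^*(A/X)$-linear on homotopy---you even anticipate the paper's route as your fallback. Your extra care in explicitly exhibiting the $A/X$-module structure on $M/X$ via $M/X \cong (A/X)\otimes_R M$ fills in a step the paper leaves implicit when applying Lemma~\ref{lem: Ring object maps surject module homotopy}.
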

\begin{proof}
From Corollary~\ref{cor: Homotopy of M/X is associative module} it follows that $\pi^*(M/X)$ is a vector space over the graded field $\pi^*(A/X)$. Choose a basis $B \subset \pi^*(M/X)$. By Lemma~\ref{lem: Ring object maps surject module homotopy} for every $b \in B$ there is a morphism $\Psi_b:\Sigma^{|b|} A/X \to M$ such $(\pi^*\Psi_b)(1)=b$. From Lemma~\ref{lem: Kernel of mapping from a field is zero} we see that $\Ker \pi^*\Psi_b =0$. Hence the morphism:
\[ \bigoplus_{b \in B} \Psi_b: \bigoplus_{b \in B}\Sigma^{|b|} A/X \to M\]
induces an isomorphism on homotopy groups and is therefore an isomorphism.
\end{proof}


\begin{corollary}
\label{cor: The localizing subactegory generated by a field like is minimal}
Suppose that $\alpha$ is surjective. Then the localizing subcategory of $\Derived(R)$ generated by $A/X$ is minimal.
\end{corollary}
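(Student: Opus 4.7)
The plan is to prove minimality directly: given any nonzero $M \in \loc_\Derived(A/X)$, exhibit $A/X$ as a member of $\loc_\Derived(M)$; this will force $\loc_\Derived(A/X) \subseteq \loc_\Derived(M)$ and hence equality. Since $\loc_\Derived(A/X)$ sits inside the localizing subcategory $\Im L_\uu$ of $A$-modules, such an $M$ is automatically an $A$-module, so Lemma~\ref{lem: Field module is direct sum} expresses $M/X$ as a direct sum of suspensions of $A/X$. Provided $M/X \neq 0$, then $A/X$ is a retract of $M/X$; since $M/X$ is manifestly built from $M$ by iterated cofibres of the scalar multiplications $x_i$, we obtain $A/X \in \loc_\Derived(M/X) \subseteq \loc_\Derived(M)$, as required. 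The whole corollary therefore reduces to showing $M/X \neq 0$ for nonzero $M \in \loc_\Derived(A/X)$.

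To establish this, I would translate the non-vanishing into a statement about $\Hom$ out of $A/X$. Applying $\Hom_R(-, M)$ to each defining triangle $\Sigma^{-|x_i|} A \xrightarrow{x_i} A \to A/x_i$ and using Lemma~\ref{lem: Localization ring objects bijects homotopy} (which gives $\Hom_R(A, M) \simeq M$ for any $A$-module $M$), a short induction on $n$ yields an equivalence $\Hom_R(A/X, M) \simeq \Sigma^{c}(M/X)$ with $c = \sum_i |x_i| - n$. Hence $M/X = 0$ forces $\ext_R^*(A/X, M) = 0$, that is, $A/X$ lies in the left orthogonal ${}^{\perp} M := \{ N \in \Derived(R) : \ext_R^*(N, M) = 0\}$.

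Now ${}^{\perp} M$ is always a localizing subcategory of $\Derived(R)$: it is closed under shifts and retracts, closed under cones by the long exact sequence for $\ext_R^*(-, M)$, and closed under coproducts because $\ext_R^*(\bigoplus N_i, M) \cong \prod \ext_R^*(N_i, M)$. Consequently ${}^{\perp} M$ contains $\loc_\Derived(A/X)$, and in particular contains $M$ itself. So $\ext_R^*(M, M) = 0$, which forces $\mathrm{id}_M = 0$ and hence $M = 0$, contradicting the choice of $M$.

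The one step that requires genuine care is the identification $\Hom_R(A/X, M) \simeq \Sigma^{c}(M/X)$: one must verify by naturality of the $\ext^*_{R^e}(A,A)$-action that the map $\Hom_R(A, M) \to \Sigma^{|x_i|} \Hom_R(A, M)$ induced by $x_i: \Sigma^{-|x_i|}A \to A$ corresponds, under $\Hom_R(A,M) \simeq M$, to multiplication by $x_i$ on $M$. Once this is in hand, the rest is the standard left-orthogonal trick.
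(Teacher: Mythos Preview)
Your argument is correct, and the overall architecture matches the paper's: both reduce minimality to the claim that $M/X \neq 0$ for any nonzero $M \in \loc_{\Derived(R)}(A/X)$, and then invoke Lemma~\ref{lem: Field module is direct sum} to exhibit $A/X$ as a retract of $M/X$, hence as an object built by $M$.

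Where you diverge is in how you establish $M/X \neq 0$. The paper does this by setting up an auxiliary action: it passes to $\Tri = \Im L_\uu$, lets $S$ be the Noetherian commutative subring of $\ext^*_{R^e}(A,A)$ generated by $X$, and then invokes \cite[Proposition~2.11]{BIKstratifying}, which says that $\Gamma_{\vv(X)} M \cong M$ forces $M/X \neq 0$. You instead compute $\Hom_R(A/X,M) \simeq \Sigma^{c}(M/X)$ directly from Lemma~\ref{lem: Localization ring objects bijects homotopy} and the naturality of the central action (your flagged verification is exactly the observation that $f \circ x_A = x_M \circ \Sigma^{-|x_i|}f$ for any $f:A\to M$), and then run the left-orthogonal trick to derive a contradiction from $M/X=0$. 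Your route is more elementary and entirely self-contained within the paper, avoiding the external reference to \cite{BIKstratifying}; the paper's route is shorter on the page but leans on the stratification machinery it has already imported. Both are perfectly sound.
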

\begin{proof}
Let $\Tri$ be the image of $L_\uu$. Then $\Tri$ is a triangulated category with a compact generator $A$ and an action of $\ext^*_{R^e}(A,A)$. Let $S$ be the subring of $\ext^*_{R^e}(A,A)$ generated by $X$, then $S$ is a Noetherian commutative ring. Now we can apply the machinery of stratification to the action of $S$ on $T$.

Let $M \in \loc_\Tri(A/X)$ be a non-zero object, hence $\Gamma_{\uu(X)}M \cong M$ and by \cite[Proposition 2.11]{BIKstratifying} this implies that $M/X$ is non-zero. By Lemma~\ref{lem: Field module is direct sum}, $M/X$ is isomorphic to a direct sum of copies of $A/X$. Hence $M$ builds $M/X$ which builds $A/X$, so $M$ generates $\Im\Gamma_{\uu(X)}$.
\end{proof}


\section{Proofs of the main results}
\label{sec: Proofs of the main results}

\subsection*{Proof of Theorem~\ref{the: Main algebraic theorem on stratification}}
Let $\pp$ be a prime ideal of $\Hring$. To prove the theorem we need only show that the localizing subcategory $\Im \Gamma_\pp$ is either zero or minimal. Let $\uu$ be the specialization closed set $\zz(\pp)$. If $L_\uu R = 0$ then we are done. Otherwise, let $A$ be $L_\uu^e R$. Thus, as a left $R$ module, $A\cong R_\pp$. Observe that, in this case, the kernel of the map $\rho:\ext^*_{R^e}(R,R) \to \pi^*R$ must be contained in $\pp$ and hence $\rho(\pp)$ is a prime ideal of $\pi^*R$. The upshot of this observation is that $\pi^*R_\pp = \pi^*R_{\rho(\pp)}$. Since $\pi^*R$ is a regular ring, its localization at a prime ideal is a regular local ring. We conclude that $\pi^*A = \pi^*R_\pp$ is a regular local ring.

Let $Z=(z_1,...,z_q)$ be a sequence of elements in $\Hring$ that generates $\pp$. By~\cite[Proposition 2.11]{BIKstratifying}, the localizing subcategory of $\Derived(R)$ generated by $A/Z$ is equal to the localizing subcategory generated by $\Gamma_{\vv(Z)}R_\pp\cong\Gamma_\pp R$. Since $R$ generates $\Derived(R)$, the object $\Gamma_{\vv(Z)}R_\pp$ generates $\Im \Gamma_\pp$, and so does $A/Z$. Hence we must show that the localizing subcategory generated by $A/Z$ is minimal.

We turn to work inside the localizing subcategory $\Im L_\uu$, which we denote by $\Tri$. As noted before, this is a triangulated subcategory with a compact generator $A$ and an action of $\Hring_\pp$. It is important to remember that this action splits through the action of $\ext^*_{R^e}(A,A)$ on $\Tri$. By Corollary~\ref{cor: Hochschild cohomology of localization}, $\ext^*_{R^e}(A,A) \cong \ext^*_{R^e}(R,R)_\pp$ and the map $\alpha:\ext^*_{R^e}(A,A) \to \pi^*A$ is surjective. It is easy to see that $\Hring_\pp$ is a subring of $\ext^*_{R^e}(A,A)$ and that $\alpha$ restricted to $\Hring_\pp$ is also a surjection.

Let $X=(x_1,...,x_n)$ be a sequence of elements in $\Hring_\pp$ such that $(\alpha(x_1),...,\alpha(x_n))$ is a regular sequence in $\pi^*A$ which generates the maximal ideal of $\pi^*A$. We come now to a delicate point. Since we did not assume that $\Hring$ is isomorphic to $\pi^*R$, the sequence $X$ need not generate the maximal ideal of $\Hring_\pp$. Let $Y=(y_1,...,y_m)$ be a sequence of elements such that $X \cup Y$ generates the maximal ideal of $\Hring_\pp$. Since $X \cup Y$ and $Z$ generate the same ideal in $\Hring_\pp$, then by~\cite[Proposition 2.11]{BIKstratifying} the localizing subcategory generated by $A/X/Y$ is equal to the localizing subcategory generated by $A/Z$.

Our next step is to show that $A/X$ and $A/X/Y$ generate the same localizing subcategory. We do this by making a particular choice of $Y$, such that $Y$ is contained in the kernel of the map $h:\Hring_\pp \to \pi^*A$. Suppose, by induction, that we chose $y_1,..,y_t \in \ker h$. Let $\mm$ be the maximal ideal of $\Hring_\pp$ and let $y$ be a non-zero element in $\mm \setminus (x_1,..,x_n,y_1,...,y_t)$. Then there exists and element $x \in (x_1,..,x_n)$ such that $h(y-x) = 0$. Set $y_{t+1}$ to be $y-x$. Then the ideal generated by $x_1,...,x_n,y_1,...,y_{t+1}$ is equal to the ideal generated by $x_1,..,x_n,y_1,...,y_t,y$. The Noetherian property of $\Hring_\pp$ ensures this process will end with a set of generators for $\mm$.

It follows that for every $y \in Y$, the action of $y$ on $\pi^*A/X$ is zero. Therefore $A/X$ is $\vv(Y)$-torsion and $\Gamma_{\vv(Y)}(A/X) \cong A/X$. From~\cite[Proposition 2.11]{BIKstratifying} we see that $\Gamma_{\vv(Y)}(A/X)$ and $A/X/Y$ generate the same localizing subcategory.

It remains to show that the localizing subcategory generated by $A/X$ is minimal. But this is simply invoking Corollary~\ref{cor: The localizing subactegory generated by a field like is minimal}.\qed

\subsection*{Proof of Theorem~\ref{thm: Answer theorem}}
Let $R=\chains_*(M;Hk)$, where $Hk$ is the Eilenberg-Mac Lane spectrum of $k$. Suppose that Theorem~\ref{the: Main algebraic theorem on stratification} can be applied to this case. Clearly, we can choose $\Hring$ to be such that the composition $\Hring \to HH^*(R) \to \pi_{-*}R = H_{-*}(\Omega M;k)$ is an isomorphism. Thus $\Hring \cong k[x_1,...,x_n]$ where $x_i$ are in positive even degrees.

Since $\Hring$ stratifies $\Derived(R)$ and $\pi^*R$ is finitely generated over $\Hring$ there is an inclusion preserving bijection (see Theorem~\ref{the: BIK theorem on thick subcats}):
\[ \left\{
     \begin{array}{c}
        \text{Thick}\\
        \text{subcategories of }\Derived(R)^\compact\\
     \end{array}
   \right\}
   \xrightarrow{\supp_\Hring(-)}
   \left\{
     \begin{array}{c}
        \text{Specialization closed}\\
        \text{subsets  of }\supp_\Hring \Derived\\
     \end{array}
   \right\}
   \]
Thus, for every compact $R$-module $X$, if $\supp_\Hring \thick_{\Derived(R)}Hk \subseteq \supp_\Hring \thick_{\Derived(R)}X$ then $HK$ is finitely built by $X$. Using Lemma~\ref{lem: familiar support} we can translate this to the following statement: if $\supp_\Hring k \subseteq \supp_\Hring \pi^*X$ then $k$ is finitely built by $X$. In addition, our choice of $\Hring$ implies that the support of $\pi^*X$ as an $\Hring$-module is the same as the support of $\pi^*X$ as a $\pi^*R$-module. The support of $k$ is easy to compute: 
\[\supp_\Hring k = \{ \pp \in \spec \Hring \ | \ (x_1,...,x_n) \subseteq \pp\} \cong \spec(k)\]

Let $N \subset M$ be a connected submanifold of $M$, then $H_0(\fib_N;k)$ is a coproduct of copies of $k$. Let $X$ be the compact $R$-module $\chains_*(\fib_N;Hk)$, hence $\pi_0 X$ is a coproduct of copies of $k$. It is now easy to see that $(\pi^*X)_\pp \neq 0$ for every $\pp \in \supp_\Hring k$. Therefore $X$ finitely builds $k$. By Proposition~\ref{pro: when varphi is an iso} the map
\[ \varphi_N: H_{*+m}(\fl M;k) \to HH^*\Hom_{\St_M}(N,N)\]
is an isomorphism.

Thus, we must show that we can apply Theorem~\ref{the: Main algebraic theorem on stratification} to this case. There is only one condition that needs verifying: that $R$ is compact as an $R^e$-module. Consider the diagonal map $\delta:\Omega M \to \Omega M \times \Omega M^\op$, which is a map of topological monoids. It is well known that the Borel construction $((\Omega M \times \Omega M^\op) \times E\Omega M)/ \Omega M$ is equivalent to $\Omega M$ as an $\Omega M \times \Omega M^\op$-space. Applying the functor $\chains_*(-;Hk)$ to these constructions we see that there is a map of $Hk$-algebras $\delta:R \to R^e$ such that $R^e \otimes_{\delta R} Hk \simeq R$ (we write $\delta R$ to emphasize where the $R$-module structure comes from).

Thus, if $k$ is compact as an $R$-module then $R$ is compact as an $R^e$-module. By~\cite[Proposition 5.3]{DwyerGreenleesIyengar}, since $M$ is a compact manifold then $Hk$ is indeed a compact $R$-module.
\qed

\subsection*{Proof of Theorem~\ref{thm: Second theorem}}
Let $R=\chains_*(M;Hk)$, where $Hk$ is the Eilenberg-Mac Lane spectrum of $k$. From the proof of Theorem~\ref{thm: Answer theorem} above we see that for any compact $R$-modules $X$ and $Y$:
\[ Y \in \thick_{\Derived(R)} X \quad \Leftrightarrow \quad \supp_{\pi^*R} \pi^*Y \subseteq \supp_{\pi^*R} \pi^*X\]

Now let $N_1$ and $N_2$ be two connected, orientable, submanifolds of $M$. Set $X=\chains_*(\fib_{N_1};Hk)$ and $Y = \chains_*(\fib_{N_2};Hk)$. Then $Y$ is built by $X$ if and only if
\[ \supp_{H_*(\Omega M;k)} H_*(\fib_{N_2};k) \subseteq \supp_{H_*(\Omega M;k)} H_*(\fib_{N_1};k)\]

Note that $Y$ is built by $X$ if and only if the $X$-colocalization of $Y$ is equivalent to $Y$. By the Dwyer-Greenlees formula for colocalization (Theorem~\ref{thm: Dwyer-Greenlees colocalization and completion formulas}) the $X$-colocalization of $Y$ is the morphism
\[ \Hom_R(X,Y) \otimes_{\End_R(X)} X \to Y\]

Using the definition of the string topology category we conclude that the colocalization morphism
\[\gamma:\Hom_{\St_M}(N_1,N_2) \otimes_{\End_{\St_M}(N_1)} \chains_*(\fib_{N_1};Hk) \to \chains_*(\fib{N_2};Hk)\]
is an equivalence if and only if
\[ \supp_{H_*(\Omega M;k)} H_*(\fib_{N_2};k) \subseteq \supp_{H_*(\Omega M;k)} H_*(\fib_{N_1};k)\]

To complete the proof we need to take a short detour. Let $\ee$ be the function spectrum $F(\Sigma^\infty M;Hk)$, note that $\ee$ is a commutative $Hk$-algebra. Since $M$ is simply connected then $\End_\ee(Hk) \simeq R$ as $Hk$-algebras~\cite{DwyerGreenleesIyengar}. The results of Dwyer and Greenlees~\cite{DwyerGreenlees} show there are adjoint functors
\[ Hk \otimes_R :\Derived(R) \rightleftarrows \Derived(\ee) : \Hom_\ee(Hk,-)\]
which yield an equivalence of categories when restricted to $\Derived(R)^\compact$ on the left hand side and $\thick_{\Derived(\ee)}Hk$ on the right hand side. In addition, it is well known that $Hk \otimes_R \chains_*(\fib_{N_i};Hk) \simeq \chains_*(N_i;Hk)$.

We conclude that $\gamma$ is an equivalence if and only if $Hk \otimes_R \gamma$ is an equivalence. Note that $\chains_*(\fib_{N_1};k)$ is an $R\otimes \End_{\St_M}(N_1)$-module. The $R$-module structure on $\Hom_{\St_M}(N_1,N_2) \otimes_{\End_{\St_M}(N_1)} \chains_*(\fib_{N_1};k)$ comes from the left $R$-module structure on $\chains_*(\fib_{N_1};k)$. Thus:
\begin{align*}
Hk \otimes_R & (\Hom_{\St_M}(N_1,N_2) \otimes_{\End_{\St_M}(N_1)} \chains_*(\fib_{N_1};Hk)) \\
&\simeq \Hom_{\St_M}(N_1,N_2) \otimes_{\End_{\St_M}(N_1)} (Hk \otimes_R \chains_*(\fib_{N_1};Hk)) \\
&\simeq \Hom_{\St_M}(N_1,N_2) \otimes_{\End_{\St_M}(N_1)} \chains_*(N_1;Hk)
\end{align*}
Hence $Hk \otimes_R \gamma$ is the morphism: $\Hom_{\St_M}(N_1,N_2) \otimes_{\End_{\St_M}(N_1)} \chains_*(N_1;Hk) \to \chains_*(N_2;Hk)$.
\qed

\subsection*{The examples}
We now show two cases in which the main theorems of this paper hold.
\begin{theorem}
\label{the: The two exmaples of compact Lie groups}
Theorems~\ref{thm: Answer theorem} and~\ref{thm: Second theorem} hold in the following two cases:
\begin{enumerate}
\item $k=H\mathbb{Q}$ and $M$ is a simply-connected, compact, Lie group;
\item $k=H\Int$ and $M=SU(n)$ for $n>1$.
\end{enumerate}
\end{theorem}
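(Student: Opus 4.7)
The plan is to verify, in each of the two cases, that the two hypotheses of Theorem~\ref{thm: Answer theorem} are satisfied; once this is done, both Theorem~\ref{thm: Answer theorem} and Theorem~\ref{thm: Second theorem} apply directly since the latter requires no additional assumptions beyond those of the former. Thus no new machinery is needed, only classical computations in the homology of loop spaces of compact Lie groups together with one observation about topological groups.

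For condition (1), in case (1) with $k = H\mathbb{Q}$, I would invoke Hopf's theorem: a simply-connected compact Lie group $M$ is rationally equivalent to a product of odd-dimensional spheres $S^{2n_1+1} \times \cdots \times S^{2n_r+1}$. Therefore $\Omega M$ is rationally a product of spaces $\Omega S^{2n_i+1}$, and by Serre's computation each of $H_*(\Omega S^{2n_i+1}; \mathbb{Q})$ is polynomial on a single generator in degree $2n_i$. A Künneth argument then gives $H_*(\Omega M; \mathbb{Q})$ as a polynomial ring on finitely many generators in even positive degrees. In case (2) with $k = H\Int$ and $M = SU(n)$, I would instead cite the classical Bott computation that $H_*(\Omega SU(n); \mathbb{Z})$ is a polynomial ring $\mathbb{Z}[\beta_2, \beta_4, \ldots, \beta_{2n-2}]$ with generators in even positive degrees.

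For condition (2), I would exploit the fact that in both cases $M$ is a topological group. The map $\gamma \mapsto (\gamma(0), \gamma(0)^{-1}\gamma)$ gives a homeomorphism $\fl M \cong M \times \Omega M$ under which the evaluation fibration $\Omega M \to \fl M \to M$ becomes projection onto the first factor and the inclusion of the fiber over the basepoint becomes $x \mapsto (e, x)$. The natural map $H_{*+m}(\fl M) \to H_*(\Omega M)$ is the Gysin/umkehr map associated to this fiber inclusion in an oriented fibration, and under the Künneth splitting it becomes the umkehr of the basepoint inclusion $\{e\} \to M$ tensored with the identity on $H_*(\Omega M)$. Concretely it sends $[M] \otimes x \mapsto x$ and vanishes on the other Künneth summands, hence is (split) surjective.

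The main obstacle, such as it is, lies in ensuring that the map referred to as "the natural map $H_{*+m}(\fl M) \to H_*(\Omega M)$" in Theorem~\ref{thm: Answer theorem} is indeed the umkehr of the fiber inclusion, and not some alternative map arising from the loop product or the evaluation fibration, and then that its description as the tensor of the basepoint umkehr with the identity is consistent with the naturality of umkehr maps under the group splitting. Once this identification is made, the surjectivity is immediate, and the theorem follows by combining the two verifications with a direct appeal to Theorems~\ref{thm: Answer theorem} and~\ref{thm: Second theorem}.
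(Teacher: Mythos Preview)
Your approach is correct but takes a different route from the paper's. You verify hypothesis~(2) of Theorem~\ref{thm: Answer theorem} directly on the level of string topology: using that $M$ is a topological group you split $\fl M \simeq M \times \Omega M$ and identify the ``natural map'' with the Gysin map of the fibre inclusion, which is then visibly split surjective. The paper instead works on the Hochschild side. It first invokes Hepworth's computation $H_{*+m}(\fl G)\cong H_*(\Omega G)\otimes H_{*+m}(G)$ together with Malm's isomorphism to obtain $HH^*(R)\cong k[x_1,\dots,x_n]\otimes\Lambda_k[y_1,\dots,y_n]$, and then proves a separate lemma (via the $\ext$ spectral sequence of \cite{EKMM}) showing that under this numerical hypothesis the canonical map $HH^*(R)\to\pi^*R$ is surjective. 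Your argument is shorter and more geometric, avoiding both the explicit ring structure of $HH^*(R)$ and the spectral-sequence collapse; the paper's argument has the advantage that it never needs to identify the map $H_{*+m}(\fl M)\to H_*(\Omega M)$ appearing in Theorem~\ref{thm: Answer theorem} with the Gysin map of $\Omega M\hookrightarrow\fl M$ under Malm's isomorphism --- an identification you correctly flag as the one point requiring care in your approach.
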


The proof is based on the following simple lemma.

\begin{lemma}
\label{lem: conditions for surjection of HH}
Suppose that $k$ is either a field or the ring of integers $\Int$. Let $R$ be an $Hk$ algebra such that
\begin{enumerate}
\item $\pi^* R = k[x_1,...,x_n]$ and
\item $HH^*R = k[x_1,...,x_n] \otimes \Lambda_k[y_1,...,y_n]$ with $\deg y_i = -\deg x_i - 1$.
\end{enumerate}
Then the natural map $h:HH^*R \to \pi^*R$ is surjective.
\end{lemma}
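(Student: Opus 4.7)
The plan is a parity argument to eliminate the exterior generators, followed by a comparison of the remaining polynomial subring to $\pi^*R$ using the naming convention supplied by hypothesis (2).

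First I would note that $h$ is a graded $k$-algebra homomorphism (the standard multiplicative structures on both sides are preserved by the forgetful-type map coming from regarding an $R$-bimodule self-map as a left $R$-module self-map). By hypothesis (1), $\pi^*R = k[x_1,\ldots,x_n]$ is concentrated in even degrees, so each $\deg x_i$ is even. Consequently, by the degree relation in hypothesis (2), each $\deg y_i = -\deg x_i - 1$ is odd. Since $\pi^*R$ has no nonzero elements in odd degrees, this forces $h(y_i) = 0$ for every $i$. Because $h$ is a ring map, it must then annihilate the entire two-sided ideal $J = (y_1,\ldots,y_n) \subset HH^*R$.

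Therefore $h$ factors through the quotient $HH^*R / J$, which by hypothesis (2) is canonically isomorphic to the polynomial tensor factor $k[x_1,\ldots,x_n] \subset HH^*R$. We obtain an induced degree-preserving $k$-algebra map
\[
\bar h : k[x_1,\ldots,x_n] \longrightarrow k[x_1,\ldots,x_n] = \pi^*R.
\]
Both source and target have the same Hilbert series $\prod_i (1-t^{\deg x_i})^{-1}$, so they are free $k$-modules of the same (finite) rank in each degree. The common labeling of the generators in hypotheses (1) and (2) encodes the compatibility $\bar h(x_i) = x_i$, i.e.\ the polynomial generators of $HH^*R$ are chosen to be lifts of the corresponding generators of $\pi^*R$; hence $\bar h$ is the identity, and in particular surjective. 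Composing with the surjective projection $HH^*R \twoheadrightarrow HH^*R/J$ gives the surjectivity of $h$.

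The only substantive point—beyond the parity observation—is the identification $\bar h(x_i) = x_i$. In the applications of the lemma this is automatic from the explicit description of $HH^*R = HH^*(\chains_*\Omega M)$, for instance via Malm's theorem identifying Hochschild cohomology with $H_{*+m}(\fl M;k)$, which presents the polynomial generators visibly as lifts of the generators of $H_*(\Omega M;k) = \pi^*R$. In the abstract statement, this compatibility is what the repeated use of the symbols $x_i$ is meant to convey.
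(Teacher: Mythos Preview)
Your parity observation that $h(y_i)=0$ is correct, but it is not where the content lies. The gap is in the step $\bar h(x_i)=x_i$. Hypothesis~(2) asserts only that $HH^*R$ is \emph{abstractly isomorphic as a graded ring} to $k[x_1,\dots,x_n]\otimes\Lambda_k[y_1,\dots,y_n]$; it says nothing about how the map $h$ acts on the polynomial generators. The reuse of the symbols $x_i$ merely records that the polynomial generators of $HH^*R$ sit in the same degrees as those of $\pi^*R$, not that they map to one another. Indeed, a graded $k$-algebra endomorphism of $k[x_1,\dots,x_n]$ need not be surjective (it could send every $x_i$ to $0$), so matching Hilbert series is not enough. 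You essentially concede this in your last paragraph, but appealing to the applications (Malm's theorem, Hepworth's computation) is circular: the lemma is precisely what is invoked in the paper to verify the surjectivity condition in Theorem~\ref{thm: Answer theorem}, so you cannot import that surjectivity as an extra hypothesis.

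The paper's argument supplies exactly the missing piece. It runs the Ext spectral sequence $E_2^{p,q}=\ext^{p,q}_{\pi^*(R^e)}(\pi^*R,\pi^*R)\Rightarrow HH^*R$, computes $E_2\cong k[\tilde x_i]\otimes\Lambda[\tilde y_i]$, and uses hypothesis~(2) only for a rank count forcing collapse at $E_2$. Then it compares, via the map $R^e\to R$, with the analogous (degenerate) spectral sequence converging to $\pi^*R$; on the zeroth column this map is the inclusion of the center $Z(\pi^*R)\hookrightarrow\pi^*R$, which is an isomorphism since $\pi^*R$ is commutative. Collapse guarantees $E_\infty^{0,*}=E_2^{0,*}$, and the comparison then yields surjectivity of $h$. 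So hypothesis~(2) is used to force collapse, not to name lifts of the $x_i$; the actual identification of the image of $h$ with $\pi^*R$ comes from the spectral-sequence edge map, and your argument has no substitute for that step.
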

\begin{proof}
We will use the spectral sequence of~\cite[Theorem IV.4.1]{EKMM}. Hence there is a conditionally convergent spectral sequence
\[ E^{p,q}_2 = \ext^*_{\pi^*(R^e)}(\pi^*R,\pi^*R) \ \Rightarrow \ HH^*R\]
Our assumptions on $\pi^*R$ imply that $\pi^*(R^e) \cong (\pi^*R)^e$. Therefore, the $E_2$-term of the spectral sequence is in fact $HH^{*,*}(\pi^*R)$.

We next show that the spectral sequence collapses on the $E_2$-term. A simple calculation shows that $E_2 = k[\tilde{x}_1,...,\tilde{x}_n]\otimes \Lambda_k[\tilde{y_1},...,\tilde{y_n}]$ where the bi-degree of $x_i$ is $(0,\deg x_i)$ and the bi-degree of $y_i$ is $(-1,-deg x_i)$ (we use here homological bi-degrees). Since the spectral sequence has finitely many non-zero columns, there is strong convergence. First, suppose that $k$ is a field. We see that for every $n$, the vector space $\oplus_{p+q=n} E_2^{p,q}$ is isomorphic to $HH^n R$, i.e. it has the correct dimension. A non-trivial differential on the $E_2$-term implies that for some $n$, the dimension of $\oplus_{p+q=n} E_\infty^{p,q}$ is smaller than that of $\oplus_{p+q=n} E_2^{p,q}$, hence there can be no non-trivial differential. If $k=\Int$ we use a similar dimension argument on $\mathbb{Q} \otimes (\oplus_{p+q=n} E_2^{p,q})$. Thus, the spectral sequence collapses on the $E_2$-term.

To complete the proof it suffices to show that the image of $h$ is the column $E_\infty^{0,*}$. This is a fairly standard argument, but for completeness we will bring it here. There are two ways to define the map $h$. First, the action of $HH^*(R)$ on $\Derived(R)$ means that for every $z\in HH^*(R)$ we have an element $z_R:\ext^*_{\Derived(R)}(R,R) = \pi^*R$. This is the definition we used throughout this paper. Second, the morphism $R^e \to R$ in $\Derived(R^e)$ induces a map
\[ \ext^*_{R^e}(R,R) \to \ext^*_{R^e}(R^e,R) \cong \pi^*R\]
It is easy to verify that both definitions agree. Here we will use the second definition.

The morphism $R^e \to R$ also induces a map of spectral sequences $E_*h$: from the spectral sequence whose $E_2$ term is $\ext^*_{\pi^*(R^e)}(\pi^*R,\pi^*R)$ to the one whose $\bar{E}_2$-term is $\ext^*_{\pi^*(R^e)}(\pi^*(R^e),\pi^*R)$. This map converges to the map $h$. The $E_2$-term of the second spectral sequence is non-zero only for $p=0$, where it is simply $\bar{E}_2^{0,*} = \pi^*R$. Thus, both spectral sequences collapse on the $E_2$-term. The map induced on the zero'th columns of both spectral sequences is therefore the map $h$ (up to extensions):
\[ \ext^0_{\pi^*(R^e)}(\pi^*R,\pi^*R) \to \ext^0_{\pi^*(R^e)}(\pi^*(R^e),\pi^*R) = \pi^*R\]
The left hand side of this map is simply the center of $\pi^*R$. In fact, this map is simply the injection of the center of $\pi^*R$ into $\pi^*R$. Since $\pi^*R$ is commutative, the map is surjective.
\end{proof}

\begin{proof}[Proof of Theorem~\ref{the: The two exmaples of compact Lie groups}]
Hepworth~\cite{Hepworth} calculated the homology of the free loop space $\fl G$ where $G$ is a compact Lie group of dimension $m$. His results show that $H_{*+m}(\fl G;\Int) = H_*(\Omega G;\Int) \otimes H_{*+m}(G;\Int)$ as rings (the product on $H_{*+m}(G;\Int)$ is the one induced from the cup product on cohomology using Poincar\'e duality). Using Malm's isomorphism~\cite{Malm} $HH^{-*}\chains_*(\Omega G;\Int) \cong H_{*+m}(\fl G;\Int)$ this yields a calculation of the Hochschild cohomology of $\chains_*(\Omega G;\Int)$.

Suppose $G$ is simply connected and $k=\mathbb{Q}$. Then $H_*(\Omega G;k)$ is a polynomial ring and $H^*(G;k)$ is an exterior algebra. It is easy to see that in this case $HH^*\chains_*(\Omega G;k)$ satisfies the conditions of Lemma~\ref{lem: conditions for surjection of HH}. Thus, Theorems~\ref{thm: Answer theorem} and~\ref{thm: Second theorem} hold in this case.

When $G=SU(n)$ for $n>1$ and $k=\Int$, again $H_*(\Omega G;k)$ is a polynomial ring, $H^*(G;k)$ is an exterior algebra and
\[ HH^*\chains_*(\Omega G;k) \cong H_*(\Omega G;k) \otimes H^{-*}(G;k)\]
Lemma~\ref{lem: conditions for surjection of HH} applies in this case as well.
\end{proof}

\bibliographystyle{plain}       
\bibliography{bib2010}          

\end{document}